\numberwithin{equation}{section}
\newtheorem{theorem}{Theorem}[section]
\newtheorem{lemma}[theorem]{Lemma}
\newtheorem{proposition}[theorem]{Proposition}
\newtheorem{corollary}[theorem]{Corollary}
\theoremstyle{definition}
\newtheorem{example}[theorem]{Example}
\theoremstyle{remark}
\newtheorem{remark}[theorem]{Remark}
\numberwithin{equation}{section}
\newcommand{\C}{\mathbb{C}}
\newcommand{\Sp}{\mathbb{S}}
\newcommand{\Q}{\mathbb{Q}}
\newcommand{\R}{\mathbb{R}}
\newcommand{\Z}{\mathbb{Z}}
\newcommand{\N}{\mathbb{N}}
\newcommand{\U}{\operatorname{U}}
\newcommand{\SU}{\operatorname{SU}}
\newcommand{\id}{\operatorname{id}}
\newcommand{\Kern}{\operatorname{ker}}
\newcommand{\diag}{\operatorname{diag}}
\newcommand{\co}{\colon\thinspace}
\newcommand{\bs}{\boldsymbol}
\newcommand{\cc} [1] {\overline {{#1}}}
\newcommand{\Hilb}{\operatorname{Hilb}}
\newcommand{\on}{\mathit{on}}
\newcommand{\off}{\mathit{off}}
\newcommand{\calA}{\mathcal{A}}
\newcommand{\calC}{\mathcal{C}}
\begin{document}

\title{An impossibility theorem for linear symplectic circle quotients}

\author{Hans-Christian Herbig}
\address{Charles University in Prague, Faculty of Mathematics and Physics,
Sokolovsk\'{a} 83, 186 75 Praha 8, Czech Republic}
\email{herbig@imf.au.dk}

\author{Christopher Seaton}
\address{Department of Mathematics and Computer Science,
Rhodes College, 2000 N. Parkway, Memphis, TN 38112}
\email{seatonc@rhodes.edu}

\keywords{symplectic reduction, unitary circle representations}
\subjclass[2010]{Primary 53D20, 13A50; Secondary 57S17}
\thanks{The research was supported by the \emph{Centre for the Quantum Geometry of Moduli Spaces},
which is funded by the Danish National Research Foundation. In addition, HCH was supported by the
Austrian Ministry of Science and Research BMWF (Start-Prize Y377) and by the grant GA CR P201/12/G028. CS was supported by
a Rhodes College Faculty Development Grant as well as the E.C. Ellett Professorship in Mathematics.}

\begin{abstract}
We prove that when $d>2$, a $d$-dimensional symplectic quotient at the zero level of a unitary circle
representation $V$ such that $V^{\Sp^1}=\{0\}$ cannot be $\Z$-graded regularly symplectomorphic
to the quotient of a unitary representations of a finite group.
\end{abstract}

\maketitle

\tableofcontents


\section{Introduction}
\label{sec:intro}

Let $G\to \U(V)$ be a unitary representation of a compact Lie group $G$ on a finite dimensional
Hermitian vector space $(V,\langle\:,\:\rangle)$. By convention, the Hermitian scalar product $\langle\:,\:\rangle$
is assumed to be complex antilinear in the first argument. For the infinitesimal action $d/dt \exp(-t\xi).v_{\mid t=0}$
of an element $\xi$ of the Lie algebra $\mathfrak g$ of $G$ on an element $v\in V$ we write $\xi.v$. The \emph{moment map}
of the representation is the regular map
\begin{eqnarray*}
J:V\to \mathfrak g^*,\quad v\mapsto J_\xi(v):=(J(v), \xi):=\frac{\sqrt{-1}}{2}\langle v,\xi.v\rangle,
\end{eqnarray*}
where $(J(v), \xi)$ stands for the dual pairing of $J(v)\in \mathfrak g^*$ with an arbitrary $\xi\in \mathfrak g$.
Since $J$ is an equivariant map, its zero level $Z:=J^{-1}(0)$ is $G$-saturated, and one can define the
\emph{symplectic quotient} $M_0:=Z/G$.  If $G$ is finite, our convention is to set $Z:=V$ and refer to the
quotient as a \emph{linear symplectic orbifold}.
Note that when $G$ is not discrete,  $0\in \mathfrak g^*$ is a singular value of $J$, and $Z\subset V$ is a closed cone.
In our situation, the symplectic quotient $M_0$ is not a manifold, but a semi-algebraic set stratified by symplectic
manifolds of varying dimensions  \cite{SjamaarLerman}.  We regard it as a \emph{Poisson differential space} with a
\emph{global chart} provided by a Hilbert basis (cf. \cite{FarHerSea}).
When comparing two such spaces, there is a natural notion of equivalence, namely that of a
\emph{$\Z$-graded regular symplectomorphism}. This point of view will be explained in more detail in
Subsection \ref{subsec:regSympBack}, see also \cite{FarHerSea}.

The objective of this paper is to compare symplectic quotients of unitary circle representations with finite
unitary quotients.  Our main result is the following.

\begin{theorem}\label{mainthm}
Let $\Sp^1\to  \U(V)$ be a unitary circle representation such that $V^{\Sp^1}=\{0\}$.
If the dimension of the symplectic quotient $M_0$ is $d>2$, then there cannot exist a $\Z$-graded regular
symplectomorphism of $M_0$ to a quotient of $\C^{n}$ by a finite subgroup $\Gamma <\U_n:=\U(\C^n)$.
\end{theorem}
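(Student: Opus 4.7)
The plan is to use the Hilbert series (Poincaré series) of the algebra of polynomial invariants that provides the global chart, which must be preserved under any $\Z$-graded regular symplectomorphism, and show that the two classes of quotients yield incompatible rational functions once $d > 2$.

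On the finite side, Molien's formula
\begin{equation*}
\Hilb(\C^n/\Gamma; t) \;=\; \frac{1}{|\Gamma|}\sum_{\gamma\in\Gamma}\frac{1}{\det_\R(1-\gamma t)}
\end{equation*}
exhibits the Hilbert series as a rational function whose singularities are poles at roots of unity; the pole at $t=1$ has order exactly $2n$ with leading Laurent coefficient $1/|\Gamma|$. On the circle side, the moment map $J=\frac{\sqrt{-1}}{2}\sum a_k |z_k|^2$ has degree $2$ and, since $V$ carries weights of both signs (forced by $V^{\Sp^1}=\{0\}$ together with $d>0$), is a non-zero-divisor in $\C[V]^{\Sp^1}$. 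The symplectic analogue of Molien's formula then yields
\begin{equation*}
\Hilb(M_0;t) \;=\; (1-t^2)\oint_{|w|=1}\frac{dw/(2\pi\sqrt{-1}\,w)}{\prod_{k=1}^N (1-w^{a_k}t)(1-w^{-a_k}t)},
\end{equation*}
a rational function with pole of order $d=2N-2$ at $t=1$, where $a_1,\ldots,a_N$ are the weights of $V$. Matching pole orders forces $2n=d$; matching leading Laurent coefficients expresses $1/|\Gamma|$ as an arithmetic function of the weights $a_k$.

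The next step is to push the comparison to sub-leading Laurent coefficients at $t=1$ and to the behaviour at other roots of unity. On the Molien side, these data are controlled by the fixed-point subspaces of elements $\gamma\in\Gamma$ (in particular by pseudo-reflections); on the symplectic side they are controlled by the divisibility pattern of the weights via the on/off decomposition $V=V_{\on}\oplus V_{\off}$, which for $d>2$ (equivalently $N\geq 3$) necessarily mixes positive and negative weights in the low-degree generators of the Hilbert basis. I would couple this Hilbert-series analysis with an examination of the Poisson bracket on the degree-$2$ part of the global chart, namely its rank, kernel, and Poisson centre, to extract a rigid invariant with no counterpart in the Molien family.

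The principal obstacle is to make this separation uniform across all finite $\Gamma<\U_n$ and all admissible weight vectors, each of which forms a large combinatorial family. The hypothesis $d>2$ must also be used in an essential way: in dimension $d=2$ the theorem genuinely fails, since every two-dimensional symplectic circle quotient coincides with a cyclic orbifold $\C^2/\Z_m$, so the argument must exploit a structural feature first available when $N\geq 3$. I would expect the final obstruction to appear as a polynomial identity among the first few Laurent coefficients of $\Hilb(M_0;t)$ at $t=1$, or as a Poisson-algebraic relation between its degree-$2$ generators, that is enforced on the circle side but violated by every Molien series of sufficiently large pole order at $t=1$.
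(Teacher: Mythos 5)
Your plan identifies the right invariant (the Hilbert series of the graded algebra of regular functions and its Laurent expansion at $t=1$), but the core of your strategy --- finding ``a polynomial identity among the first few Laurent coefficients \dots{} that is enforced on the circle side but violated by every Molien series'' --- is known to fail. The leading-coefficient Diophantine condition $1/\gamma_0(A)\in\Z$ and its subleading companions are satisfied by infinitely many weight vectors (this is precisely the situation left open by \cite{HerbigSeaton} and the reason the present paper exists), and neither the Laurent data at $t=1$ nor the behaviour at other roots of unity separates the two families uniformly. The actual proof needs two ingredients absent from your sketch. First, one must show that a $\Z$-graded regular symplectomorphism restricts to a $\Z$-graded regular symplectomorphism between the closures of corresponding orbit type strata, \emph{and} that the restricted global charts are again equivalent to charts associated to Hilbert bases of the smaller representations; this is not formal and requires the normality of the complex symplectic reduced space of a torus action (via FPIG and $2$-largeness) together with a separating-subalgebra argument. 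Second, since $\gamma_2(A)>0$ forces $\Gamma$ to contain a pseudoreflection, both quotients have complex codimension-one strata, and iterating the restriction reduces the whole problem to $\dim_{\C}V=3$ versus finite $\Gamma'<\U_2$; that residual case is then eliminated by an exhaustive check over the nine du Val types of finite subgroups of $\U_2$, using not only $\gamma_0$, $\gamma_2$, and the count of quadratic invariants, but also --- for the abelian (Type I) groups --- the precise orders of the cyclic quotients appearing as codimension-one strata, which is stratification data that the Hilbert series alone does not see. Without the restriction-to-strata machinery and the classification argument, the Hilbert-series comparison you propose cannot close.
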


Note we can assume $n=d/2$.  In \cite{FarHerSea}, one can find a constructive proof of the statement that any $2$-dimensional symplectic
quotient of a unitary torus representation is $\Z$-graded regularly symplectomorphic to some cyclic quotient of $\C$.
We can always assume without loss of generality that $V^{\Sp^1}=\{0\}$, because $V^{\Sp^1}$ is a
symplectic subspace on which the circle acts trivially; see Remark \ref{rem:ZeroWeights}.
So together with \cite{FarHerSea} and the observation that the $0$-dimensional case is trivial, Theorem \ref{mainthm} provides
the complete answer to the question of which linear symplectic circle quotients are $\Z$-graded regularly symplectomorphic
to a finite unitary quotient.

It should be mentioned that some classes of unitary circle representations can be ruled out from our considerations
right from the beginning. If $M_0$ is homeomorphic to a finite symplectic quotient, it is necessarily a rational homology manifold.
Assuming $V^{\Sp^1}=\{0\}$, this is the case precisely when the sign of exactly one weight of the representation differs
from the sign of the others, see \cite{HerbigIyengarPflaum} or \cite[Theorems 3 and 4]{FarHerSea}. In this situation,
the corresponding GIT quotient $V/\!\!/\C^\times$,
which is homeomorphic to $M_0$ by the Kempf--Ness theorem \cite{KempfNess,GWSkempfNess}, is affine space (cf. \cite{WehlauPolynomial}).
It is perfectly legitimate to restrict our attention to these cases. Moreover, in \cite{HerbigSeaton} the authors discovered a Diophantine
condition on the weights (cf. Equation \eqref{eq:Gamma0Diophantine}) that has  to hold if the symplectic circle quotient
is $\Z$-graded regularly symplectomorphic to a finite unitary quotient. Example  calculations (\cite[Section 7]{HerbigSeaton})
indicate that  the `majority' of unitary circle representations violate this condition, even though it holds in infinitely
many cases. The point of this paper is that the remaining cases can be ruled out as well.

Let us outline the plan of the paper. In Section \ref{subsec:regSympBack}, we recall the basics from \cite{FarHerSea} about
how to construct $\Z$-graded regular symplectomorphism from global charts that are provided by the theory of invariant polynomials.
Moreover, we recall in Section \ref{subsec:LaurentBack} some crucial results from \cite{HerbigSeaton} about the Hilbert series
of the $\Z$-graded algebra of regular functions on a symplectic quotient associated to a unitary circle representation.
In Section \ref{sec:generalRed}, we develop tools for understanding the restrictions of global charts for our symplectic
quotients to the closures of orbit type strata.  Specifically, we demonstrate that $\Z$-graded regular symplectomorphisms
between symplectic reduced spaces restrict to $\Z$-graded regular symplectomorphisms between the closures of strata, and in the
relevant cases, the restrictions of global charts are associated to Hilbert bases; see Theorems \ref{thrm:RestrictStratumGeneral}
and \ref{thrm:RestrictStratumFinite}.  On the way, we demonstrate that complex symplectic reduced spaces of
unitary torus representations are normal algebraic varieties; see Theorem \ref{thrm:ComplexRedNormal}.
We rely heavily on the fact that the groups under consideration are abelian or finite, and
we do not know if these results generalize.  If it turns out that global charts are  unique in general,
then Lemma \ref{lem:RestrictStratAlgAbelian} would be a trivial consequence of Theorem \ref{thrm:RestrictStratumGeneral}
for any compact Lie group $G$, and the arguments would simplify considerably.  Based on these tools, we present in
Section \ref{sec:n3Red} an inductive argument that reduces the consideration to the cases when $\dim_{\C} V=3$.
That is, we show that any $\Sp^1$-reduced space that is $\Z$-graded regularly symplectomorphic to a linear symplectic
orbifold must contain the reduced space of an $\Sp^1$-action on $\C^3$ that is as well $\Z$-graded regularly symplectomorphic
to a linear symplectic orbifold; see Corollarey \ref{cor:ReduceToNis3}.  In Section \ref{sec:nis3}, we complete the proof
of Theorem \ref{mainthm} by ruling out case-by-case all finite subgroups of $\U_2$ using the classification of duVal
\cite{DuVal} following the exposition of \cite{CoxeterBook}.

One additional comment is in order.  There is an analogous notion of a $\Z$-graded regular diffeomorphism between Poisson differential
spaces with global charts, defined as a $\Z$-graded regular symplectomorphism where each algebra is equipped
with the trivial Poisson bracket.  A natural question to ask is whether a linear symplectic circle quotient is even $\Z$-graded
regularly diffeomorphic to a linear orbifold.  In this paper, the main techniques used to demonstrate the non-existence of a
$\Z$-graded regular symplectomorphism are restrictions of such a symplectomorphism to the closures of orbit type strata
and comparing the Hilbert series of the $\Z$-graded rings of regular functions.  The Poisson structures on the algebras of
smooth functions in question are only used to establish a bijection between the orbit type strata of $\Z$-graded regularly
symplectomorphic spaces by applying \cite[Proposition 3.3]{SjamaarLerman}; see Section \ref{sec:generalRed}.
If it turns out that the algebra of smooth functions
$\calC^\infty(M_0)$ of a symplectic reduced space $M_0$ determines the orbit type stratification, then the proof of Theorem
\ref{mainthm} would extend to preclude the existence of a $\Z$-graded regular diffeomorphism.


\section*{Acknowledgements}

We would like to thank Srikanth Iyengar for helpful discussions and assistance.
We would like to thank the \emph{Centre for the Quantization of Moduli Spaces}
and Bernhard Lamel for providing the research environment for this project and enabling travel.
The research of HCH was financed by the \emph{Centre for the Quantum Geometry of Moduli Spaces}
(which is funded by the Danish National Research Foundation), the
Austrian Ministry of Science and Research BMWF (Start-Prize Y377) and by the grant GA CR P201/12/G028. CS was supported by
a Rhodes College Faculty Development Grant as well as the E.C. Ellett Professorship in Mathematics.

\section{Background}
\label{sec:background}

The purpose of this section is to recall the definition of the algebra of regular functions on a linear symplectic reduced space
as well as the definition of $\Z$-graded regular symplectomorphism.  We refer to \cite{FarHerSea} for more details.  In addition,
we summarize the formulas from \cite{HerbigSeaton} for the first four coefficients of the Laurent series of a $\Sp^1$-reduced space and
a linear symplectic orbifold for the cases we will need.


\subsection{$\Z$-graded regular symplectomorphisms}
\label{subsec:regSympBack}
As already indicated in the introduction, the symplectic quotient $M_0=Z/G$ of a unitary representation $G\to \U(V)$ is not a
smooth manifold, but a \emph{stratified symplectic space} (this result is due to R. Sjamaar and E. Lerman \cite{SjamaarLerman}).
The symplectic strata $(M_{(H)}\cap Z)/G$ are given by the spaces of $G$-orbits in the intersection of the orbit type strata
$M_{(H)}$ with the zero fiber $Z$ of the moment map. The strata are indexed by  the conjugacy classes  $(H)$ of isotropy
subgroups $H<G$ of the $G$ action on $V$. It has been realized already in \cite{ACG} that $M_0$
carries a natural smooth structure, i.e., the algebra of smooth functions
\[
    \mathcal C^\infty(M_0):=\mathcal C^\infty(V)^G/\mathcal{I}_Z^G,
\]
where $\mathcal{I}_Z^G:=\mathcal C^\infty(V)^G\cap \mathcal{I}_Z$ denotes the invariant part of the ideal
$\mathcal{I}_Z$ of smooth functions vanishing on $Z$.  Equipped with this algebra of smooth functions, $M_0$ has the
structure of a \emph{differential space} (in the sense of Sikorski).  It is moreover pointed out in \cite{ACG}
that $\mathcal C^\infty(M_0)$ inherits a Poisson bracket $\{ , \}$ from $\mathcal C^\infty(V)$.
Hence the triple $(M_0, \calC^\infty(M_0), \{ , \})$ is a \emph{Poisson differential space}; see
\cite[Definition 5]{FarHerSea}.  Using flows of invariant functions, it is shown in \cite{Gonc,SjamaarLerman} that
the symplectic stratification can be reconstructed from the Poisson algebra $(\mathcal C^\infty(M_0),\{\:,\:\})$.
Later, in \cite{LMS}, a notion of equivalence of symplectic quotients has been proposed that generalizes
symplectomorphism between manifolds.  Namely, a map $\Phi: M_0\to N_0$ between two  symplectic quotients
$M_0$ and $N_0$ is called a \emph{symplectomorphism} if $\phi$ is a homeomorphism and its pullback
$\Phi^*:\mathcal C^\infty(N_0)\to \mathcal C^\infty(M_0)$ is an isomorphism of Poisson algebras.
It is clear from what has been said that a symplectomorphism must preserve the symplectic strata.

In the situation of the symplectic quotient of a (unitary) representation $V$, one can talk about symplectomorphisms
that are mediated by polynomials. A language to axiomatizes this idea has been suggested in \cite{FarHerSea}; the
essentials we recall next. The algebra of real-valued regular functions $\R[V]$ on $V$ forms a Poisson subalgebra
of $\mathcal C^\infty(V)$. If we wish to use complex coordinates, we can view $\R[V]$ as the algebra
$\C[V\times \cc V]^-$ of complex polynomials on $V\times \cc V$ that are invariant under complex conjugation $^-$.
By the theorem of Hilbert and Weyl, we can pick a real Hilbert basis $\phi_1,\dots,\phi_k\in\R[V]^G$, i.e. a
complete set of real homogeneous polynomial $G$-invariants.  The Hilbert basis gives rise to an embedding (the
\emph{Hilbert embedding}) of the quotient space $V/G$ into $\R^k$. The image $X$ of the Hilbert embedding is a
closed semialgebraic subset of $\R^k$, and inherits from $\R^k$ a smooth structure $\mathcal C^\infty(X)$ (a
function $f$ on $X$ is defined to be \emph{smooth} if it can be written as a restriction $f=F_{|X}$ of a smooth
function $F$ on $\R^k$). By the Theorem of G. W. Schwarz \cite{GWSdiff} and J. Mather \cite{Mather} the pullback
under $\phi=(\phi_1,\dots,\phi_k)$ provides an isomorphism of Fr\'echet algebras  $\mathcal C^\infty(X)$ to
$\mathcal C^\infty(V)^G$. In other words, $\phi$ provides us with a diffeomorphism of $V/G$ onto $X$.

For each element $\phi_i$, $i=1,\dots,k$ of the Hilbert basis, we introduce a variable $x_i$
and assign to it the degree $\operatorname{deg}(\varphi_i)$. We introduce the free $\Z$-graded
commutative algebra $\R[\bs x]:=\R[x_1,\dots,x_k]$ and consider the algebra homomorphism
\[
    \varphi:\R[\bs x]\to \mathcal C^\infty(M_0)
\]
that sends $x_i$ to the class $\varphi_i$ of $\phi_i$ in $\mathcal C^\infty(M_0)$. This algebra  homomorphism
is not surjective and for most representations not injective. However, it does satisfy the following:
\begin{enumerate}
\item   The image of the algebra homomorphism $\varphi$ is a Poisson subalgebra. We call it the
        \emph{Poisson algebra of regular functions} $\R[M_0]$ on $M_0$. It is isomorphic to the $\Z$-graded
        Poisson algebra  $\R[\bs x]/\Kern(\varphi)$, the bracket being of degree $-2$.
\item   The algebra $\calC^\infty(M_0)$ is $\calC^\infty$-integral over $\R[M_0]$, i.e. every function
        $f\in \calC^\infty(M_0)$ can be written as $f=F\circ (\varphi_1,\dots,\varphi_k)$
        for a suitable choosen $F\in\calC^\infty(\R^k)$.
\item   The image of $\varphi$ in $\mathcal C^\infty(M_0)$ separates points.
\end{enumerate}
Since it is a generalization of a linear coordinate system for a vector space, we call a ring homomorphism
satisfying (1), (2), and (3) a \emph{global chart} for $M_0$.  Two global charts are \emph{equivalent}
if their images in $\mathcal{C}^\infty(M_0)$ coincide.  If the ideal $\Kern(\varphi)$ in $\R[\bs x]$
is homogeneous, then the global chart $\varphi$ is \emph{$\Z$-graded}.  A global chart given
by assigning to each $x_i$ an element of a Hilbert basis for $\R[V]^G$ as above is a \emph{global chart associated
to a Hilbert basis for $\R[V]^G$}.  A global chart provides us with an embedding
of $M_0$ into euclidean space as a closed semialgebraic subset of $X$. It turns out that the choice of the
Hilbert basis is not essential; two global charts associated to different choices of Hilbert bases for
$\R[V]^G$ are easily seen to be equivalent. Whether every global chart is actually equivalent
to a global chart associated to a Hilbert basis is unclear to the authors.

In analogy with coordinate systems on vector spaces, global charts can be used to construct
smooth (Poisson) maps between symplectic quotients. The appropriate tool is provided by
\cite[Theorem 6]{FarHerSea}, the so-called \emph{Lifting Theorem}. It allows to lift certain ($\Z$-graded)
Poisson homomorphisms between Poisson algebras of regular functions
to the Poisson algebras of smooth function, namely those that are compatible with the embeddings into
eucildean space. In analogy with the notion of regular maps between affine varieties, these lifts are be
referred to as \emph{($\Z$-graded)-regular Poisson maps}. An invertible ($\Z$-graded)-regular Poisson
map is an example of a symplectomorphism, called a \emph{($\Z$-graded)-regular symplectomorphism}.

For later reference, let us spell out the dry details on how a $\Z$-graded regular Poisson map is constructed.
Suppose we have two symplectic quotients with $\Z$-graded global charts
$\varphi:\R[\bs x]:=\R[x_1,\dots,x_k]\to \mathcal C^\infty(M_0)$ and
$\psi:\R[\bs y]:=\R[y_1,\dots,y_m]\to \mathcal C^\infty(N_0)$.
Let us assume we have an algebra homomorphism $\lambda: \R[\bs y]\to\R[\bs x]$ compatible with the $\Z$-grading such that
$\lambda(\Kern(\psi))\subset \Kern(\varphi)$, and the induced morphism  $\overline \lambda
:\R[\bs y]/\Kern(\psi)\to\R[\bs x]/\Kern(\varphi)$ is compatible with the Poisson bracket
$\{\:,\:\}$. Note that every morphism of the Poisson algebras $\R[N_0]\to \R[M_0]$ of regular
functions can be represented by such a $\lambda$. Our $\lambda$ is uniquely determined by the
homogeneous polynomials $\lambda_i:= \lambda(y_i)\in\R[\bs x]$ where $i=1,\dots, m$. Let us define
$\vartheta:M_0\to \R^m$ by $\vartheta(\xi):=((\varphi(\lambda_1))(\xi),\dots, (\varphi(\lambda_m))(\xi))$
and say that $\lambda$ is a \emph{arrow} if the image of the map $\underline\psi:N_0\to \R^m$ that sends
$\eta\in N_0$ to $(\psi(y_1))(\eta), \dots, \psi(y_m))(\eta)$ contains $\vartheta(M_0)$. The Lifting Theorem
\cite[Theorem 6]{FarHerSea} says that an arrow $\lambda$ extends uniquely to a morphism of Poisson algebras
$\Lambda:\mathcal C^\infty(N_0)\to \mathcal C^\infty(M_0)$ and that the lift $\Lambda$ is the pullback of the
map that sends $\xi\in M_0$ to $\underline\psi^{-1}(\vartheta(\xi))\in N_0$.

\begin{remark}
\label{rem:ComplexRepresentation}
For later reference, we note that the complexification of the representation of $G$ on
$V \times \cc{V}$ is equal to the $G_\C$-representation $V \times V^\ast$ where $V^\ast$ denotes the dual
representation.  The latter is called the \emph{cotangent lifted $G_\C$-representation}.
Moreover, the complexification $J_\C$ of the moment map $J$ is exactly the moment
map of the cotangent lifted $G_\C$-representation.  The complexification $\mathcal{I}_Z\otimes_\R \C$ of
the vanishing ideal $\mathcal{I}_Z$ of $Z = J^{-1}(0)$ is nothing but the vanishing ideal
$\mathcal{I}_{Z^\C}$ in $\C[V \times V^\ast]$ of the zero locus $Z^\C = J_\C^{-1}(0)$.
Let $\mathcal{I}_{Z^\C}^{G_\C} = \mathcal{I}_{Z^\C} \cap \C[V \times V^\ast]^{G_\C}$,
and then the complexification $\R[M_0] \otimes \C$ of the algebra of regular functions on $M_0$ is given by
$\big(\C[V \times V^\ast]/\mathcal{I}_{Z^\C}\big)^{G_\C} = \C[V \times V^\ast]^{G_\C}/\mathcal{I}_{Z^\C}^{G_\C}$.
\end{remark}


\subsection{Results on symplectic circle quotients}
\label{subsec:LaurentBack}
Here, we specialize the discussion to unitary representations $V$ of the circle $G=\Sp^1$ and recall results from \cite{HerbigSeaton}
regarding the Hilbert series of the $\N$-graded algebra $\R[M_0]$ of regular functions on the symplectic quotient $M_0=Z/G$.
It will be convenient to identify $V$ with $\C^n$ by choosing coordinates $z_1,\dots,z_n$ with respect to which the circle
action is diagonal. Remember that we are
interested in $\C^n$ as a real variety, and so the Poisson algebra of regular functions on $V=\C^n$ is the algebra
$\R[V]\:=\C[z_1,\dots,z_n,\cc z_1,\dots,\cc z_n]^-$ of those complex polynomials in $z_i$ and $\cc z_i$ that
are invariant under complex conjugation ${}^-$. The Poisson bracket is given by $\{z_i,\cc{z}_j\}=-2\sqrt{-1}\delta_{ij}$.
The action is uniquely determined by the moment map
\begin{equation}
\label{eq:MomentMap}
    J:V=\C^n\to \mathfrak g^*=\R, \quad J(\bs z, \cc{\bs z}):=\frac{1}{2}\sum_{i=1}^n a_i\: z_i\cc{z}_i,
\end{equation}
where $a_1,\dots, a_n\in \Z$ are the \emph{weights} of the representation.
We call $A:=(a_1,\dots, a_n)$ the \emph{weight vector}.  The action is effective if and only if the
weights are coprime, i.e., $\gcd(A):=\gcd(a_1,\dots, a_n)=1$. We can restrict to consideration of
the effective case by replacing  $A$ by $A/\gcd(A)$, which does not change the Poisson algebra of regular functions.
Also note that it does no harm to assume that there
are no nontrivial invariants, i.e., all weights are nonzero. If all weights are nonzero but have the
same sign, the reduced a space is a point. Otherwise, by \cite{HerbigIyengarPflaum} we know that the
vanishing ideal $\mathcal{I}_Z$ coincides with the ideal $\mathcal{I}_J$ generated by the moment map.

Recall that the Hilbert series of an $\N$-graded locally finite dimensional vector space
$W=\oplus_{i\ge 0}W_i$  is given by the formal series
\[
    \Hilb_W(x)=\sum_{i= 0}^\infty\dim(W_i)\:x^i\in \Q[\![x]\!].
\]
Let us introduce the \emph{on-shell Hilbert series} of our circle action with weight vector $A$,
\[
    \Hilb_A^\on(x):=\Hilb_{\R[V]/\mathcal{I}_J}(x),
\]
the Hilbert series of the graded algebra $\R[V]/\mathcal{I}_J$. As $\mathcal{I}_Z=\mathcal{I}_J$ in this case, the on-shell
Hilbert series  coincides with the Hilbert series of the $\Z$-graded algebra $\R[M_0]$ of regular functions on
$M_0$. Note that the Hilbert series $\Hilb_A^\on(x)$
does not depend on the signs of the weights. So putting $\alpha_i:=|a_i|$ and writing
$\bs \alpha:=(\alpha_1,\dots, \alpha_n)$ we have $\Hilb_A^\on(x)=\Hilb_{\bs \alpha}^\on(x)$.
We will occasionally use the latter notation for emphasis when we are not concerned with the signs of the weights.

$\Hilb_{A}^{\on}(x)$ is actually a rational function with a pole at $x = 1$ of order $d=2n - 2$,
the Krull dimension of $\R[V]/\mathcal{I}_J$. It therefore admits a Laurent expansion of the form
\begin{equation}
\label{eq:LaurentGeneralReduced}
    \Hilb_{A}^{\on}(x) = \sum\limits_{k=0}^\infty \frac{\gamma_k(A)} {(1 - x)^{d-k}}.
\end{equation}
In \cite[Theorem 5.1]{HerbigSeaton}, the first four Laurent coefficients are computed in terms
of symmetric functions of the $\alpha_j$.  Here, we will need \cite[Corollary 5.2]{HerbigSeaton},
which states that $\gamma_0(A) > 0$ and $\gamma_2(A) = \gamma_3(A) > 0$, as well as the expressions
for the Laurent coefficients when $n = 3$.  It will be convenient to express these coefficients in
terms of the $\alpha_j$ as well as the elementary symmetric polynomials $e_i$ in the $\alpha_j$,
\[
    e_1 =   \alpha_1 + \alpha_2 + \alpha_3,
    \quad
    e_2 =   \alpha_1 \alpha_2 + \alpha_1 \alpha_3 + \alpha_2 \alpha_3,
    \quad\mbox{and}\quad
    e_3 =   \alpha_1 \alpha_2 \alpha_3.
\]
For brevity, we let $\gcd\nolimits_{ij} := \gcd(\alpha_i, \alpha_j)$.

When $n = 3$ and all weights are nonzero, we have
\begin{equation}
\label{eq:Gamma0Reduced}
    \gamma_0(A)
    =
    \frac{\alpha_1 \alpha_2 + \alpha_1 \alpha_3 + \alpha_2 \alpha_3}
    {(\alpha_1 + \alpha_2)(\alpha_1 + \alpha_3)(\alpha_2 + \alpha_3)}
    =
    \frac{e_2}{e_1 e_2 - e_3},
\end{equation}
and
\begin{align}
    \nonumber
    \gamma_2(A)
    &=
    \frac{\alpha_1^2 +  \alpha_2^2 + \alpha_3^2 + \alpha_1 \alpha_2 + \alpha_1 \alpha_3 + \alpha_2 \alpha_3}
        {12(\alpha_1 + \alpha_2)(\alpha_1 + \alpha_3)(\alpha_2 + \alpha_3)}
        \\ \nonumber
        &\quad\quad
            + \frac{\gcd\nolimits_{12}^2-1}{12(\alpha_1+\alpha_2)}
            + \frac{\gcd\nolimits_{13}^2-1}{12(\alpha_1+\alpha_3)}
            + \frac{\gcd\nolimits_{23}^2-1}{12(\alpha_2+\alpha_3)}
    \\ \label{eq:Gamma23Reduced}
    &=
    \frac{1}{12(e_1 e_2 - e_3)}
        \Big[-2e_2 + e_2\big( \gcd\nolimits_{12}^2 + \gcd\nolimits_{13}^2 + \gcd\nolimits_{23}^2 \big)
        \\ \nonumber
        &\quad\quad
            + \gcd\nolimits_{12}^2 \alpha_3^2
            + \gcd\nolimits_{13}^2 \alpha_2^2
            + \gcd\nolimits_{23}^2 \alpha_1^2
        \Big]
\end{align}
Note that $\gamma_1(A) = 0$ in general.

If $\Gamma < \U_{n-1}$ is a finite subgroup, then the Hilbert series $\Hilb_{\R[\C^{n-1}]^\Gamma|\R}(x)$
of real regular functions on the quotient $\C^{n-1}/\Gamma$ has a similar form,
\begin{equation}
\label{eq:LaurentGeneralFinite}
    \Hilb_{\R[\C^{n-1}]^\Gamma|\R}(x)
    =
    \sum\limits_{k=0}^\infty \frac{\gamma_k(\Gamma)} {(1 - x)^{2n - 2-k}}.
\end{equation}
In this case, the first four coefficients of the Laurent series are computed \cite[Theorem 6.1]{HerbigSeaton}
in terms of a collection of primitive pseudoreflections $g_1, \ldots, g_r$ for the action of $\Gamma$ on $\C^{n-1}$,
roughly a minimal generating collection of pseudoreflections.
We recall from \cite[Definition 6.2]{HerbigSeaton} that a set $\{ g_1, \ldots, g_r \}$ of pseudoreflections in $\Gamma$
is \emph{a set of primitive pseudoreflections} if for each pseudoreflection $g \in \Gamma$,
$g = g_i^k$ for $1\leq i \leq r$ and some integer $k$, and if $g_i^k = g_j^\ell \neq e$, then $i = j$ and
$k \equiv \ell \mod |g_i|$.  It is easy to see that such a set always exists though it is not unique.
Note that the diagonal action of $\Gamma$ on $\C^{n-1} \times \overline{\C^{n-1}}$ necessarily has no pseudoreflections.

We have
\begin{equation}
\label{eq:Gamma0123Finite}
    \gamma_0(\Gamma)    =   \frac{1}{|\Gamma|},
    \quad
    \gamma_1(\Gamma)    =   0,
    \quad\mbox{and}\quad
    \gamma_2(\Gamma) = \gamma_3(\Gamma)
        =   \frac{1}{12 |\Gamma|}\sum\limits_{i=1}^r |g_i|^2 - 1.
\end{equation}
In particular, note that $\gamma_2(\Gamma) = \gamma_3(\Gamma) = 0$ if $\Gamma$ contains no pseudoreflections
(in terms of its action on $\C^{n-1}$).

We state two obvious consequences of these computations which will be used in the sequel.  The first was discussed
experimentally in \cite[Section 7]{HerbigSeaton}.

\begin{corollary}[Diophantine Condition on $\gamma_0(A)$]
\label{cor:Gamma0Diophantine}
Let $A = (a_1, \ldots, a_n) \in \Z^n$ be the weight vector associated to a linear $\Sp^1$-action on $\C^n$,
and suppose that $\Hilb_{A}^{\on}(x) = \Hilb_{\R[\C^{n-1}]^\Gamma|\R}(x)$ for some finite $\Gamma < \U_{n-1}$.
Then
\begin{equation}
\label{eq:Gamma0Diophantine}
    \frac{1}{\gamma_0(A)} \in \Z.
\end{equation}
\end{corollary}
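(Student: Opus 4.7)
The plan is to observe that the hypothesized equality of Hilbert series forces the coefficients of their Laurent expansions about $x=1$ to agree, and then to identify $\gamma_0(\Gamma)$ in terms of the group order.

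First I would note that both rational functions $\Hilb_A^{\on}(x)$ and $\Hilb_{\R[\C^{n-1}]^\Gamma|\R}(x)$ admit Laurent expansions at $x=1$ of the forms \eqref{eq:LaurentGeneralReduced} and \eqref{eq:LaurentGeneralFinite} respectively, with the same leading pole order $d = 2n-2$. Indeed, the Krull dimension of $\R[V]/\mathcal{I}_J$ is $2n-2$ by the discussion preceding \eqref{eq:LaurentGeneralReduced}, and $\R[\C^{n-1}]^\Gamma$ has Krull dimension $2(n-1)=2n-2$ as an invariant ring of a finite group acting on a $2(n-1)$-dimensional real vector space. Since the assumed equality of Hilbert series is an equality of rational functions, their Laurent expansions at $x=1$ are termwise identical; in particular the leading Laurent coefficients match:
\[
    \gamma_0(A) = \gamma_0(\Gamma).
\]

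Then I would invoke the first identity of \eqref{eq:Gamma0123Finite}, namely $\gamma_0(\Gamma) = 1/|\Gamma|$, to conclude
\[
    \frac{1}{\gamma_0(A)} = |\Gamma| \in \Z,
\]
as desired.

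There is no genuine obstacle here; the argument is a direct extraction of the leading Laurent coefficient from the formulas recalled in Subsection \ref{subsec:LaurentBack}. The only point requiring a line of justification is that the pole orders at $x=1$ of the two Hilbert series coincide so that comparing $\gamma_0$'s is meaningful, and this follows from the matching Krull dimensions of the respective graded algebras.
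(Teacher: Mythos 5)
Your argument is correct and is exactly the one the paper intends: the paper presents this corollary as an immediate consequence of the Laurent expansions \eqref{eq:LaurentGeneralReduced} and \eqref{eq:LaurentGeneralFinite} together with $\gamma_0(\Gamma)=1/|\Gamma|$ from \eqref{eq:Gamma0123Finite}. Your added remark that the pole orders agree via the Krull dimensions is a sensible bit of justification that the paper leaves implicit.
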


Similarly, as $\gamma_0(A) > 0$ for each weight vector $A$ by \cite[Corollary 5.2]{HerbigSeaton},
and as $\gamma_2(\Gamma) = 0$ unless $\Gamma$ contains pseudoreflections, we have the following.

\begin{corollary}
\label{cor:Gamma2Positive}
Let $A = (a_1, \ldots, a_n) \in \Z^n$ be the weight vector associated to a linear $\Sp^1$-action on $\C^n$,
and suppose that $\Hilb_{A}^{\on}(x) = \Hilb_{\R[\C^{n-1}]^\Gamma|\R}(x)$ for some finite $\Gamma < \U_{n-1}$.
Then $\Gamma$ contains at least one pseudoreflection.
\end{corollary}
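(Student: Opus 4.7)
The plan is to extract the desired conclusion from a single Laurent coefficient comparison at the pole $x = 1$. By hypothesis, the two Hilbert series agree, and both admit Laurent expansions around $x=1$: the circle side via \eqref{eq:LaurentGeneralReduced} and the finite quotient side via \eqref{eq:LaurentGeneralFinite}, each with a pole of order $d = 2n-2$. Since a rational function in $x$ with a pole at $x=1$ has uniquely determined Laurent coefficients in the basis $\{(1-x)^{-(d-k)}\}_{k \geq 0}$, the equality of Hilbert series forces $\gamma_k(A) = \gamma_k(\Gamma)$ for every $k \geq 0$. The only coefficient I actually need is $k=2$.

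Next I would invoke the strict positivity statement recalled from \cite[Corollary 5.2]{HerbigSeaton}, namely $\gamma_2(A) = \gamma_3(A) > 0$. Transported across the coefficient identity $\gamma_2(A) = \gamma_2(\Gamma)$ above, this yields the key inequality $\gamma_2(\Gamma) > 0$.

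Finally I would close the argument by reading the explicit formula in \eqref{eq:Gamma0123Finite}, which expresses $\gamma_2(\Gamma) = \frac{1}{12|\Gamma|}\sum_{i=1}^r (|g_i|^2 - 1)$ in terms of a set of primitive pseudoreflections $g_1,\ldots,g_r$ of $\Gamma$ acting on $\C^{n-1}$. If $\Gamma$ contained no pseudoreflections at all, then $r = 0$, the sum would be empty, and we would obtain $\gamma_2(\Gamma) = 0$, directly contradicting the strict positivity derived in the previous step. Hence $\Gamma$ must contain at least one pseudoreflection, as claimed.

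There is no genuine obstacle in this argument: the statement is flagged by the authors as an \emph{obvious consequence}, and indeed the entire content is a one-coefficient comparison. The only care required is to make sure one uses the correct Laurent coefficient — namely $\gamma_2$, which on the circle side is provably strictly positive, rather than, say, $\gamma_1$, which vanishes on both sides and carries no information — and to be explicit that matching rational functions with a pole of fixed order at $x=1$ forces term-by-term agreement of their Laurent expansions.
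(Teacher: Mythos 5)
Your proof is correct and follows essentially the same route as the paper: equate the Laurent coefficients at $x=1$, use $\gamma_2(A)>0$ from \cite[Corollary 5.2]{HerbigSeaton}, and note that the formula \eqref{eq:Gamma0123Finite} forces $\gamma_2(\Gamma)=0$ when $\Gamma$ has no pseudoreflections. If anything, you are slightly more careful than the paper's one-line justification, which cites the positivity of $\gamma_0(A)$ where the coefficient actually doing the work is $\gamma_2(A)$.
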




\section{Restriction of regular symplectomorphisms}
\label{sec:generalRed}

In this section, we consider the restrictions of $\Z$-graded regular symplectomorphisms to closures
of orbit type strata.  We demonstrate in Theorem \ref{thrm:RestrictStratumGeneral} that
such restrictions are themselves $\Z$-graded regular symplectomorphisms.  Moreover, in some cases,
the associated global charts will be shown to be equivalent to global charts associated to Hilbert bases;
see Lemma \ref{lem:RestrictStratAlgAbelian} and Theorem \ref{thrm:RestrictStratumFinite}.

We fix the following notation, which will be used throughout this section.  Let $M_0$ be the symplectic
reduced space associated to a unitary representation $V$ of the compact Lie group $G$ and let $N_0$ be
the symplectic reduced space associated to a unitary representation $W$ of the compact Lie group $K$.
Assume $M_0$ is equipped with the global chart
\[
    \varphi\co \R[\bs{x}] = \R[x_1, \ldots, x_k]\to \calC^\infty(M_0),
    \quad
    x_i \mapsto \varphi_i,
    \quad
    i=1, \ldots, k
\]
associated to a Hilbert basis for $\R[V]^G$, and $N_0$ is equipped with the global chart
\[
    \varphi^\prime\co \R[\bs{y}] = \R[y_1, \ldots, y_\ell]\to \calC^\infty(N_0),
    \quad
    y_i \mapsto \varphi_i^\prime,
    \quad
    i=1, \ldots, \ell
\]
associated to a Hilbert basis for $\R[W]^K$.
Assume further that $\chi\co N_0 \to M_0$ is a $\Z$-graded regular symplectomorphism induced by the arrow
$\lambda\co\R[\bs{x}]\to\R[\bs{y}]$.  We will use $R$ to denote an orbit type stratum of $M_0$
and $S$ to denote an orbit type stratum of $N_0$.

The following is a trivial consequence of the definitions and the fact that, by the above assumptions,
$\overline{\lambda}\co\R[\bs{x}]/\ker\varphi\to\R[\bs{y}]/\ker\varphi^\prime$ is a ring isomorphism
preserving the Poisson bracket.

\begin{lemma}
\label{lem:NewGlobalChart}
Define
\begin{equation}
\label{eq:NewGCpsi}
    \psi\co \R[\bs{x}] \to \calC^\infty(N_0), \quad\quad
    x_i \mapsto \psi_i := \varphi^\prime\circ\lambda(x_i).
\end{equation}
Then $\psi$ is a global chart for $N_0$ that is equivalent to $\varphi^\prime$ and hence associated to
a Hilbert basis for $\R[W]^K$.  With respect to the global charts
$\varphi$ and $\psi$, the $\Z$-graded regular symplectomorphism $\chi$ is induced by the identity arrow
on $\R[\bs{x}]$.
\end{lemma}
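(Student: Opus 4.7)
The plan is to verify, in order, (a) that $\psi$ is a well-defined $\Z$-graded ring homomorphism, (b) that $\text{Im}(\psi) = \text{Im}(\varphi^\prime)$ in $\calC^\infty(N_0)$, (c) that this forces $\psi$ to inherit the three defining properties of a global chart from $\varphi^\prime$, and (d) that the identity arrow on $\R[\bs x]$ induces $\chi$ with respect to $\varphi$ and $\psi$. The content is really just a bookkeeping unwinding of the Lifting Theorem; the only conceptual input is that $\overline{\lambda}$ is a bijection.

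For (a), $\psi = \varphi^\prime \circ \lambda$ is a composition of $\Z$-graded ring homomorphisms, hence $\Z$-graded; in particular $\ker\psi = \lambda^{-1}(\ker\varphi^\prime)$ is homogeneous because $\lambda$ is graded and $\ker\varphi^\prime$ is homogeneous. For (b), since $\overline{\lambda}\co\R[\bs x]/\ker\varphi \to \R[\bs y]/\ker\varphi^\prime$ is a ring isomorphism, it is in particular surjective, which means $\lambda(\R[\bs x]) + \ker\varphi^\prime = \R[\bs y]$. Applying $\varphi^\prime$ to both sides yields
\[
    \text{Im}(\psi)
    = \varphi^\prime(\lambda(\R[\bs x]))
    = \varphi^\prime(\R[\bs y])
    = \text{Im}(\varphi^\prime).
\]

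For (c), observe that the three properties defining a global chart (image is a Poisson subalgebra, $\calC^\infty(N_0)$ is $\calC^\infty$-integral over the image, and the image separates points) depend only on the image in $\calC^\infty(N_0)$. Since $\text{Im}(\psi) = \text{Im}(\varphi^\prime)$, all three properties transfer verbatim from $\varphi^\prime$ to $\psi$, so $\psi$ is a global chart equivalent to $\varphi^\prime$ by definition. Each $\psi_i$ is the class in $\calC^\infty(N_0)$ of the invariant polynomial obtained by substituting the Hilbert basis elements $\phi^\prime_1,\dots,\phi^\prime_\ell$ into the polynomial $\lambda(x_i)\in\R[\bs y]$, and these substituted invariants descend to a generating set of $\R[N_0]$ by the image computation above, justifying the Hilbert basis assertion in the statement.

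For (d), unwinding the paper's conventions: viewing $\chi\co N_0\to M_0$ through the Lifting Theorem, the arrow $\lambda\co\R[\bs x]\to\R[\bs y]$ together with the charts $\varphi$ on $M_0$ and $\varphi^\prime$ on $N_0$ produces the evaluation map $\vartheta\co N_0\to\R^k$ sending $\eta\mapsto((\varphi^\prime(\lambda(x_i)))(\eta))_{i=1}^k = (\psi_i(\eta))_{i=1}^k$, and then $\chi(\eta) = \underline{\varphi}^{-1}(\vartheta(\eta))$. Now replace $\varphi^\prime$ by $\psi$ as the chart on $N_0$ and take $\mu = \id_{\R[\bs x]}$ as the candidate arrow. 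The associated evaluation map is
\[
    \eta \mapsto ((\psi(\id(x_i)))(\eta))_{i=1}^k = (\psi_i(\eta))_{i=1}^k,
\]
which is literally $\vartheta(\eta)$. Hence $\mu = \id$ automatically satisfies the arrow condition $\underline\varphi(M_0)\supset\vartheta(N_0)$ (which already holds because $\chi$ is well-defined), and the map it induces is $\eta\mapsto\underline\varphi^{-1}(\psi_i(\eta))_{i} = \chi(\eta)$. The main potential obstacle is notational rather than mathematical, namely keeping straight the direction reversal between arrows and induced maps, but no substantive computation is required.
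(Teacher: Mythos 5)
Your proof is correct and is exactly the bookkeeping the paper intends: the paper states this lemma as "a trivial consequence of the definitions" and of $\overline{\lambda}$ being a Poisson ring isomorphism, offering no written proof, and your verification of the image identity $\operatorname{im}\psi=\operatorname{im}\varphi^\prime$ via surjectivity of $\overline{\lambda}$ plus the unwinding of the Lifting Theorem is precisely the omitted argument. The only point worth a half-sentence more care is that the $\calC^\infty$-integrality condition transfers because each $\varphi_j^\prime$ is a polynomial in the $\psi_i$ (not merely because the images coincide as sets), but your image computation already supplies this.
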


This in particular allows us to identify the generators $\varphi_i$ of $\R[M_0]$ with the generators
$\psi_i$ of $\R[N_0]$.

By \cite[Proposition 3.3]{SjamaarLerman}, the stratification of $M_0$ by orbit types, with respect to which it is a symplectic
stratified space, is determined by the Poisson algebra $\calC^\infty(M_0)$.  Because the map $\chi\co N_0 \to M_0$
is a homeomorphism inducing an isomorphism $\widetilde{\lambda}\co\calC^\infty(M_0)\to\calC^\infty(N_0)$ of Poisson algebras,
it follows that $\chi$ induces a bijection between the orbit type strata of $N_0$ and $M_0$.
Specifically, for each orbit type stratum $S$ of $N_0$, $\chi(S)$ is an orbit type stratum of $M_0$.
Then as $\chi(S^{cl}) = \chi(S)^{cl}$ where $^{cl}$ denotes the (topological) closure, it follows that $\chi$ induces a bijection
between closures of orbit type strata.

Now, let $\mathfrak{g}$ denote the Lie algebra of $G$, $J\co V\to\mathfrak{g}^\ast$ the homogeneous quadratic moment map, and
$Z = J^{-1}(0)$ the shell.  Pick an isotropy group $H$ of a point in $Z \subset V$ for the $G$-action on $V$.
Let $V_H$ denote the set of points with isotropy group $H$ and $V^H$ denote the set of points fixed by $H$.
We make some observations following \cite{SjamaarLerman}; note that this case differs from the case treated by Sjamaar--Lerman
in that we apply their arguments to the subspace $V^H = (V_H)^{cl}$, which is a symplectic subspace of $V$
by \cite[Lemma 27.1]{GuilSternSTPhysics}.

The normalizer $N_G(H)$ of $H$ in $G$ acts on $V^H$; this action is not effective unless $H$ is trivial.  That $H$ is an isotropy group
implies that $H$ is the subgroup of $N_G(H)$ acting trivially on $V^H$.  Let $L = N_G(H)/H$ and let $\mathfrak{l}$
denote the Lie algebra of $L$.  The moment map $J \co V \to \mathfrak{g}^\ast$ restricts to a moment map
$J_{V^H} \co V^H \to \mathfrak{l}^\ast$; this is proven for the restriction $J_{V_H}$ of $J$ to $V_H$ in
\cite[page 391--2]{SjamaarLerman}, and the proof for $V^H$ is identical.  Using canonical coordinates
for $V$ that restrict to canonical coordinates for $V^H$, it is easy to see that $J_{V^H}$ is homogeneous
quadratic unless $L$ is finite, in which case $J_{V^H}$ is trivial.
Similarly, that $J_{V^H}^{-1}(0) = V^H \cap Z$ follows from the fact that $J_{V_H}^{-1}(0) = V_H \cap Z$,
and $V_H$ is dense in $V^H$.  The piece $R$ of the stratification of $M_0$ corresponding to the orbit type $(H)$
is given by the (regular) symplectic reduced space $J_{V_H}^{-1}(0)/L$, and similarly
$R^{cl}$ is given by the reduced space $J_{V^H}^{-1}(0)/L$.  Note, however, that $0$ need not be a
regular value for $J_{V^H}$.

Let $\mathcal{I}_{R^{cl}}$ denote the vanishing ideal of $R^{cl}$ in $\calC^\infty(M_0)$.  Then composing with the quotient map
$\calC^\infty(M_0) \to \calC^\infty(M_0)/\mathcal{I}_{R^{cl}} = \calC^\infty(R^{cl})$, one obtains a map
$\varphi_{R^{cl}}\co\R[\bs{x}] \to \calC^\infty(R^{cl})$ that is easily seen to be a global chart for the Poisson
differential space $R^{cl}$.  In particular, as the set $\{ f|_{R} : f \in \calC^\infty(M_0) \}$ is dense in
$\calC^\infty(R)$ by \cite[page 387]{SjamaarLerman}, as the image of $\psi$ is a Poisson subalgebra of $\calC^\infty(M_0)$,
and as $R$ is a piece of the symplectic stratified space $M_0$,
it follows that the image of $\psi_{R^{cl}}$ is a Poisson subalgebra of $\calC^\infty(R^{cl})$.
That $\calC^\infty(R^{cl})$ is $\calC^\infty$-integral over $\R[\bs{x}]$ follows from the integrality of
$\calC^\infty(M_0)$, and that the image of $\varphi_{R^{cl}}$ separates points follows from
the fact that the image of $\varphi$ separates points in $M_0$.  In the same way, letting $S$ be the stratum of $N_0$
such that $\chi(S^{cl}) = R^{cl}$, one obtains a global chart
$\psi^\prime_{S^{cl}}\co\R[\bs{y}] \to \calC^\infty(S^{cl})$, and it is clear that $\lambda$
defines an arrow inducing a $\Z$-graded regular symplectomorphism between $S^{cl}$
and $\chi(S)^{cl}$ that corresponds to the restriction of $\chi$ to $S^{cl}$.
With this, we have proven the following.

\begin{theorem}
\label{thrm:RestrictStratumGeneral}
With the notation as above, for each orbit type stratum $S$ of the symplectic stratified space $N_0$,
$\chi$ restricts to a $\Z$-graded regular symplectomorphism $\chi|_{S^{cl}}$ from the closure $S^{cl}$ to the closure
of an orbit type stratum of the symplectic stratified space $M_0$.  Here, the global charts for
$\chi(S^{cl})$ and $S^{cl}$ are the restrictions of $\varphi$ and $\varphi^\prime$,
respectively, and the arrow inducing $\chi|_{S^{cl}}$ coincides with the arrow inducing $\chi$.
Giving $S^{cl}$ instead the global chart formed by restricting the $\psi$ defined in Equation
\eqref{eq:NewGCpsi}, the arrow inducing $\chi|_{S^{cl}}$ is the identity on $\R[\bs{x}]$.
\end{theorem}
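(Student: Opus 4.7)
The plan is to organize the ingredients already assembled in the discussion preceding the theorem into a single, coherent argument in three steps, corresponding to: (i) producing a bijection of stratum closures, (ii) building the restricted global charts, and (iii) identifying the restriction of $\chi$ with the lift of $\lambda$. First, I would invoke \cite[Proposition 3.3]{SjamaarLerman} to argue that the orbit type stratification of a symplectic quotient is intrinsic to its Poisson algebra of smooth functions. The isomorphism of Poisson algebras $\widetilde{\lambda}\co\calC^\infty(M_0)\to\calC^\infty(N_0)$ induced by $\chi$ then transports strata to strata, and because $\chi$ is a homeomorphism with $\chi(S^{cl})=\chi(S)^{cl}$, stratum closures to stratum closures; in particular, for each orbit type stratum $S\subseteq N_0$, there is an orbit type stratum $R\subseteq M_0$ with $\chi(S^{cl})=R^{cl}$.

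Next, I would construct the candidate restricted global charts. Let $\calI_{R^{cl}}$ denote the vanishing ideal of $R^{cl}$ in $\calC^\infty(M_0)$, and define $\varphi_{R^{cl}}\co\R[\bs{x}]\to\calC^\infty(R^{cl})=\calC^\infty(M_0)/\calI_{R^{cl}}$ by composing $\varphi$ with the quotient map. I would verify the three axioms of a global chart for $\varphi_{R^{cl}}$: the image is a Poisson subalgebra, since $R$ is a symplectic piece of $M_0$, the Poisson bracket restricts, and $\{f|_R:f\in\calC^\infty(M_0)\}$ is dense in $\calC^\infty(R)$ by \cite[page 387]{SjamaarLerman}; $\calC^\infty$-integrality passes from $\calC^\infty(M_0)$ over $\mathrm{image}(\varphi)$ to the quotient; and point separation for $\varphi_{R^{cl}}$ is inherited from that of $\varphi$ by restriction to $R^{cl}$. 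The analogous construction yields a global chart $\varphi^\prime_{S^{cl}}$ on $S^{cl}$.

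Finally, I would check that $\lambda$ descends to an arrow between the restricted charts and that the Lifting Theorem identifies its lift with $\chi|_{S^{cl}}$. Because $\chi(S^{cl})=R^{cl}$, the pullback $\widetilde{\lambda}$ carries $\calI_{R^{cl}}$ onto $\calI_{S^{cl}}$, so $\lambda$ descends to a graded Poisson isomorphism $\R[\bs{x}]/\ker\varphi_{R^{cl}}\to\R[\bs{y}]/\ker\varphi^\prime_{S^{cl}}$. The arrow condition—that the image of the map defined by $\varphi^\prime_{S^{cl}}$ contain the image of the composite $\vartheta_{R^{cl}}$—is inherited from the original arrow condition by restriction to the (compact) closed subsets $R^{cl}$ and $S^{cl}$. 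The Lifting Theorem \cite[Theorem 6]{FarHerSea} then produces a unique $\Z$-graded regular Poisson map $S^{cl}\to R^{cl}$; uniqueness of the lift forces it to coincide with the continuous map $\chi|_{S^{cl}}$, which is already a homeomorphism, hence a $\Z$-graded regular symplectomorphism. The last assertion follows by using the global chart $\psi$ of Lemma \ref{lem:NewGlobalChart} in place of $\varphi^\prime$: its restriction $\psi_{S^{cl}}$ is a global chart on $S^{cl}$ with respect to which the arrow representing $\chi|_{S^{cl}}$ is the identity on $\R[\bs{x}]$, as the identity already represents $\chi$ with respect to $\varphi$ and $\psi$ before restriction.

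The main obstacle is the verification that the image of $\varphi_{R^{cl}}$ is a Poisson subalgebra of $\calC^\infty(R^{cl})$ and not merely an associative one. This step combines two ingredients living at different levels: the density of restrictions from $M_0$ in $\calC^\infty(R)$, and the compatibility of the ambient Poisson bracket with the symplectic structure on the piece $R$. Care is needed because the Poisson bracket on $\calC^\infty(R^{cl})$ is manifest only on the open dense piece $R\subseteq R^{cl}$, so one must extend from $R$ to $R^{cl}$ by continuity, relying on the fact that the bracket of two polynomial invariants is again a polynomial invariant and therefore extends smoothly across the singular locus $R^{cl}\setminus R$.
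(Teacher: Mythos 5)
Your proposal follows essentially the same route as the paper: Sjamaar--Lerman's Proposition 3.3 to transport strata and their closures along $\chi$, composition with the quotient by the vanishing ideal $\calI_{R^{cl}}$ to obtain the restricted global charts (with the Poisson, integrality, and point-separation axioms verified from the same ingredients, including the density of restrictions on page 387 of \cite{SjamaarLerman}), and the observation that the original arrow $\lambda$ descends and induces the restriction of $\chi$. The only cosmetic slip is the parenthetical claim that $R^{cl}$ and $S^{cl}$ are compact (they are closed cones), but nothing in your argument actually uses compactness, so the proof stands as written.
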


Let $Z_H = J_{V^H}^{-1}(0)$, and identify $R^{cl}$ with the reduced space $Z_H/L$ as above.
It is obvious that restriction to $V^H$ defines a homomorphism $\R[V]^G \to \R[V^H]^L$, and hence
that the image of $\varphi_{R^{cl}}$ is contained in $\R[V^H]^L/\mathcal{I}_{Z_H}^L$.
What is not clear in general, however, is whether $\varphi_{R^{cl}}$ is a surjective map onto
$\R[V^H]^L/\mathcal{I}_{Z_H}^L$.  This is related to the question of whether a global chart on a Poisson differential
space is unique up to equivalence as discussed in Section \ref{sec:intro}.  Certainly, the algebra $\R[V^H]^L$ may admit
a proper subalgebra that separates points, see e.g. \cite[Section 2.3.2]{DerskenKemperBook}, but the authors are
currently unaware of whether such an algebra could satisfy the Poisson and integrality conditions.
Fortunately, however, the surjectivity of $\varphi_{R^{cl}}$ onto $\R[V^H]^L/\mathcal{I}_{Z_H}^L$ is clear
in the cases relevant to this investigation.  The rest of this section is devoted to verifying this fact.

\begin{lemma}
\label{lem:RestrictStratAlgAbelian}
Suppose $G$ is abelian and $\varphi$ is a global chart associated to a Hilbert basis for $\R[V]^G$.
Then the image of $\varphi_{R^{cl}}$ is equal to $\R[V^H]^L/\mathcal{I}_{Z_H}^L$.
\end{lemma}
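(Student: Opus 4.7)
\emph{Proof proposal.} The plan is to reduce the claim to showing that the polynomial restriction $\rho \co \R[V]^G \to \R[V^H]^L$ is surjective, and then to establish this surjectivity via a weight-space argument that exploits the abelianness of $G$.

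First I would note that $G$ abelian forces $N_G(H) = G$, so $L = G/H$. The global chart $\varphi_{R^{cl}}$ is obtained by composing $\varphi$ with the quotient $\calC^\infty(M_0) \to \calC^\infty(R^{cl})$, and under the identification $R^{cl} = Z_H/L$ this composition factors as $\R[\bs x] \twoheadrightarrow \R[V]^G/\mathcal{I}_Z^G \xrightarrow{\bar\rho} \R[V^H]^L/\mathcal{I}_{Z_H}^L$, where $\bar\rho$ is induced by restriction of polynomials to $V^H$ (well-defined since $Z_H = V^H \cap Z$, so $\mathcal{I}_Z^G$ maps into $\mathcal{I}_{Z_H}^L$). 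Because $\phi_1,\ldots,\phi_k$ form a Hilbert basis, the first arrow is surjective, so the image of $\varphi_{R^{cl}}$ coincides with the image of $\bar\rho$. It therefore suffices to show $\bar\rho$ is surjective, and for this it is enough to verify that $\rho\co \R[V]^G \to \R[V^H]^L$ itself is surjective.

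To prove surjectivity of $\rho$, I would choose complex coordinates $z_1,\ldots,z_n$ on $V$ that simultaneously diagonalize the $G$-action, so that $g \cdot z_j = \chi_j(g) z_j$ for characters $\chi_j \co G \to \Sp^1$. Then $V^H$ is the span of those $z_j$ with $\chi_j|_H = 1$; reindex so these are $z_1,\ldots,z_m$. Each such $\chi_j$ descends to a character $\bar\chi_j$ of $L$, and $L$ acts diagonally on $V^H$ with weights $\bar\chi_j$. Given $f \in \R[V^H]^L$, expand $f$ as a polynomial in $z_1,\ldots,z_m,\cc z_1,\ldots,\cc z_m$; by $L$-invariance, we may assume every monomial $\prod z_j^{\alpha_j}\cc z_j^{\beta_j}$ appearing in $f$ has trivial total $L$-weight $\prod \bar\chi_j^{\alpha_j - \beta_j}$. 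Since $\bar\chi_j(gH) = \chi_j(g)$, triviality of the $L$-weight is equivalent to triviality of $\prod \chi_j^{\alpha_j - \beta_j}$ as a character of $G$. Consequently each such monomial, regarded as a polynomial on all of $V$, is $G$-invariant, and its sum produces a preimage of $f$ under $\rho$.

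The main obstacle, such as it is, is conceptual rather than technical: one must recognize that $L$-invariance of a monomial in $V^H$-coordinates is equivalent to $G$-invariance of the same monomial viewed as a polynomial on $V$, because the characters acting on those coordinates factor through the projection $G \twoheadrightarrow L$. The abelianness of $G$ is essential at two points — it gives $N_G(H) = G$ and hence $L = G/H$, and it provides the full simultaneous diagonalization of $V$. Without these inputs one would need a substitute mechanism for extending $L$-invariants on $V^H$ to $G$-invariants on $V$, which is exactly the difficulty foreshadowed in the paragraph preceding the lemma statement.
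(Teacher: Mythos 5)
Your proposal is correct. The overall reduction is the same as the paper's: both arguments boil the lemma down to surjectivity of the restriction map $\R[V]^G \to \R[V^H]^L$, using that abelianness makes $V^H$ a $G$-invariant subspace with $L = G/H$ and $\R[V^H]^G = \R[V^H]^L$. Where you differ is in how that surjectivity is established. The paper disposes of it in one line by citing the general fact (Derksen--Kemper, Corollary 2.2.9, essentially the Reynolds operator) that for a linearly reductive group, restriction of invariants to a $G$-stable closed subvariety is surjective, applied to the complexified action on $V \times V^\ast$ and then intersected with the conjugation-fixed elements. You instead give a self-contained weight-space argument: diagonalize the abelian action, observe that an $L$-invariant monomial on $V^H$ has trivial total $G$-character because the relevant characters factor through $G \twoheadrightarrow L$, and hence the same monomial is already a $G$-invariant on all of $V$. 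Your version is more elementary and makes completely explicit where abelianness is used, at the cost of invoking it twice (for $N_G(H)=G$ and for simultaneous diagonalization); the paper's version uses abelianness only to guarantee that $V^H$ is $G$-stable, which is the genuine obstruction in the non-abelian case, and would apply verbatim to any reductive group acting on a $G$-invariant subspace. Both are valid proofs of the lemma.
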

\begin{proof}
Choose an isotropy group $H$ for the $G$-action on $V$ and use the notation as above.
As $G$ is abelian, $V^H$ is $G$-invariant, and $L = G/H$.  Then the restriction map
$\C[V \times  V^\ast]^{G_\C} \to \C[(V \times  V^\ast)^H]^{G_\C}$ is surjective
by \cite[Corollary 2.2.9]{DerskenKemperBook}.
Taking the fixed elements by complex conjugation, the restriction map $\R[V]^G \to \R[V^H]^G = \R[V^H]^L$ is
surjective. Therefore, restricting a Hilbert basis for $\R[V]^G$ yields a complete (not necessarily minimal)
set of invariants for the $L$-action on $V^H$, completing the proof.
\end{proof}

We assume for the remainder of this section that $G$ is a torus.

Recall \cite{GWSlifting} that if $Y$ is a $G_\C$-variety, then $Y$ has \emph{finite principal isotropy groups (FPIG)}
if there is a point $y \in Y$ such that the orbit $G_\C y$ is closed and the isotropy group of $y$ in $G_\C$ is
$0$-dimensional.  In our case, as $G$ is abelian, it is easy to see that the isotropy group of a point
in the principal orbit type of the $G$-space $V$ is the kernel of the $G$-action.  Indeed, choose a point $v$
in the principal orbit type stratum of $V$ as a $G$-space, and let $H$ be the isotropy group of $v$.  Then as
$H$ is the isotropy group of every point in the principal orbit type stratum, and as the
principal orbit type stratum is dense in $V$, it follows that $H$ is the kernel of the $G$-action.
If we assume further that the $G$-action is effective, it follows that there is a dense subset of $V$
consisting of points with trivial isotropy.

It may happen, though, that each such point in $V$ has infinite $G_\C$-isotropy or fails to have closed
$G_\C$-orbits so that $V$ may not have FPIG as a $G_\C$-variety.  However, this only occurs
in somewhat artificial situations.

\begin{example}
\label{ex:ReductionFPIG}
Let $\C^\times$ act on $\C$ with weight vector $(1)$.  It is easy to see that each point
$z \neq 0$ has trivial isotropy group.  However, the only point with a closed orbit
is $0$ with isotropy $\C^\times$ so that $\C$ does not have FPIG as a $\C^\times$-variety.

Note, though, that the moment map is given by $J(z) = z_1 \bar{z_1}/2$,
see Equation \eqref{eq:MomentMap}, so that $J^{-1}(0) = \{ 0 \}$.  Hence, as $J^{-1}(0)$ is contained
in a proper subspace of $\C$ (the trivial subspace), we may restrict our attention to this subspace without changing
the reduced space as a Poisson differential space.  That is, we may begin with the trivial action of
$\C^\times$ on a point to yield the same reduced space.  Effectivising the action, we are left with a point
with trivial group action, which clearly has FPIG.
\end{example}

The reason for the failure of FPIG in Example \ref{ex:ReductionFPIG} and the method of correcting it are
in some sense the general situation for unitary torus representations.  Specifically, we have the following.

\begin{lemma}
\label{lem:ReducedCoordinateRestrict}
With the notation as above, assume $G$ is a torus.  Choose coordinates $(z_1, \ldots, z_n)$ for $V$
with respect to which the $G$-action is diagonal, let $I$ denote the subset of $\{ 1,\ldots,n\}$
such that $z_i = 0$ for every point $(z_1,\ldots,z_n) \in J^{-1}(0)$, and let $V^\prime$
denote the subspace of $V$ defined by $z_i = 0 \;\forall i \in I$.  Then the symplectic
reduced spaces of the $G$-representations $V$ and $V^\prime$ coincide.  Moreover,
$J^{-1}(0)$ has nonempty intersection with the principal orbit type stratum of $V^\prime$.
\end{lemma}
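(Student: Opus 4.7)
For the first assertion, my plan is to observe that $I$ is defined precisely so that $Z = J^{-1}(0) \subset V^\prime$, and that writing $J$ in the diagonal coordinates as $J(z) = \tfrac{1}{2}\sum_{j} a_j |z_j|^2$ with $a_j \in \mathfrak{g}^*$ the weights, the restriction $J|_{V^\prime}$ manifestly agrees with the moment map of the induced $G$-representation on $V^\prime$. Its zero fiber is then $Z$, forcing the two symplectic reduced spaces $Z/G$ to literally coincide.

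For the second assertion, I first describe the principal orbit type stratum of $V^\prime$: since $G$ is a torus with characters $\chi_j = \exp(a_j)$, the isotropy of $v = (z_j)_{j \notin I}$ equals $\bigcap_{j:\, z_j(v) \neq 0} \ker \chi_j$, which is minimized exactly when every coordinate with $j \notin I$ is nonzero. Thus the principal stratum is $U = \{v \in V^\prime : z_j(v) \neq 0 \text{ for all } j \notin I\}$, and the task is to produce some $v \in Z \cap U$. My plan is to reduce this to the convex-geometric statement that there exist strictly positive reals $c_j > 0$, $j \notin I$, with $\sum_{j \notin I} c_j a_j = 0$ in $\mathfrak{g}^*$; any such collection immediately yields a point $v \in Z \cap U$ by taking $|z_j|^2 = c_j$.

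To produce such $c_j$, I intend to fix each $j \notin I$ and invoke the defining property of $I$ to pick $v_j \in Z$ with $z_j(v_j) \neq 0$, then rewrite $J(v_j) = 0$ as
\[
    -a_j \;=\; \sum_{k \neq j,\, k \notin I} \frac{|z_k(v_j)|^2}{|z_j(v_j)|^2}\, a_k
\]
(the case $a_j = 0$ being trivial, as $-a_j = 0$ already lies in the cone). This displays $-a_j$ as a nonnegative combination of the remaining weights, so the closed convex cone $C \subset \mathfrak{g}^*$ generated by $\{a_k : k \notin I\}$ satisfies $C = -C$ and is therefore a linear subspace; summing the resulting relations over $j \notin I$ then exhibits $0$ as a strictly positive combination of the $a_j$, as required. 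The main obstacle I foresee is precisely the passage from the pointwise witnesses $v_j$ to a single $v \in Z \cap U$: since $Z$ is nonlinear, the $v_j$ cannot be combined directly inside $V$, and the convex-cone maneuver circumvents this by transferring the question to the weight lattice, where the positivity of the coefficients $|z_j|^2$ in the torus moment map linearizes everything.
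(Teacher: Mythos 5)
Your proof is correct, and it takes a more self-contained route than the paper. The paper disposes of the lemma almost entirely by citation: the $G$-invariance of $V'$ and the coincidence of the reduced spaces are deferred to the proof of Theorem 4 of \emph{FarHerSea}, and the crucial fact that $J^{-1}(0)$ meets the locus where all coordinates $z_j$, $j \notin I$, are nonzero is asserted to hold ``by construction'' via the weight-matrix description given there (cf.\ also Remark \ref{rem:WehlauStability}, which phrases this as stability of the $G_\C$-action on $V'$). You correctly identify that this last point is the real content of the second assertion --- the pointwise witnesses $v_j \in Z$ with $z_j(v_j)\neq 0$ cannot simply be superposed inside the nonlinear set $Z$ --- and you supply an honest argument: each relation $J(v_j)=0$ exhibits $-a_j$ as a nonnegative combination of the remaining weights, and summing these relations over $j \notin I$ produces coefficients bounded below by $1$, hence a strictly positive solution of $\sum_{j\notin I} c_j a_j = 0$, which is then realized by a point of $Z$ via $|z_j|^2 = c_j$. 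This linearization through the weight cone is exactly the mechanism hidden in the citation (it is the torus-moment-map incarnation of the stability criterion of Wehlau--Popov), so your version buys a proof readable without the external reference, at the cost of a slightly longer argument. Two cosmetic remarks: the detour through ``$C=-C$, hence $C$ is a subspace'' is not needed, since the summation already yields the positive combination directly; and ``the isotropy is minimized \emph{exactly} when all coordinates are nonzero'' is a slight overstatement (minimality may also occur on smaller supports), but all you need is that the all-nonzero locus lies in the principal stratum, which is what your argument uses.
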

\begin{proof}
The facts that $V^\prime$ is $G$-invariant and the two reduced spaces coincide are demonstrated in the
proof of \cite[Theorem 4]{FarHerSea}.  Moreover, from the description of the weight matrices for the
$G$-actions on $V$ and $V^\prime$ in the same proof, it is easy to see that every point in $V^\prime$
whose coordinates $z_j$ are all nonzero and have principal isotropy type.  Noting that $J^{-1}(0)$
contains points in $V^\prime$ with all nonzero coordinates by construction completes the proof.
\end{proof}

\begin{remark}
\label{rem:WehlauStability}
Using the Kempf--Ness theorem \cite{KempfNess,GWSkempfNess}, it is easy to see that the definition
of $V^\prime$ given in Lemma \ref{lem:ReducedCoordinateRestrict} coincides with that of \cite[Lemma 2]{WehlauPopov}.
By this result, if the $G$- and hence $G_\C$-actions are effective, it follows that the $G_\C$-action on $V^\prime$
is \emph{stable}, i.e. the union of the closed $G_\C$-orbits is dense in $V^\prime$.
\end{remark}

As a consequence of Lemma \ref{lem:ReducedCoordinateRestrict}, we may assume without loss of generality
that $J^{-1}(0)$ contains with principal isotropy group.  We then have the following.

\begin{theorem}
\label{thrm:ComplexRedNormal}
Let $G$ be a torus.  Then $\R[M_0] \otimes \C$ is normal.
\end{theorem}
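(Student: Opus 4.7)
My plan is to reduce the question to normality of the coordinate ring of the complex shell $Z^\C = J_\C^{-1}(0) \subset V \times V^*$ and then apply Serre's normality criterion. By Remark \ref{rem:ComplexRepresentation}, $\R[M_0] \otimes \C = \big(\C[V \times V^*]/\mathcal{I}_{Z^\C}\big)^{G_\C}$; since $G_\C$ is reductive and the subring of invariants of a reductive group action on a normal ring is again normal, it suffices to prove that the ring $A := \C[V \times V^*]/\mathcal{I}_{Z^\C}$ is normal. I will verify the Serre conditions $S_2$ and $R_1$ for $A$.

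For $S_2$, I would show $A$ is a complete intersection. The complexified moment map $J_\C \co V \times V^* \to \mathfrak{g}_\C^*$ has $r := \dim_\C G_\C$ components. Under our standing hypothesis that $V = V^\prime$ satisfies FPIG (Lemma \ref{lem:ReducedCoordinateRestrict}), Remark \ref{rem:WehlauStability} ensures the $G_\C$-action on $V \times V^*$ is stable, so generic $G_\C$-orbits are closed of maximal dimension $r$ and meet $Z^\C$. A dimension count gives $\dim_\C Z^\C = 2n - r$, which is the expected dimension for the vanishing locus of $r$ functions in the $2n$-dimensional ambient space. Hence the components of $J_\C$ form a regular sequence in $\C[V \times V^*]$, so $A$ is a complete intersection and in particular Cohen-Macaulay.

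For $R_1$, I would identify the singular locus of $Z^\C$ as the set of points where the $G_\C$-stabilizer algebra is nontrivial. Indeed, at $(v,\xi) \in Z^\C$ the rank of $dJ_\C$ equals $r$ minus the dimension of the stabilizer subalgebra $\mathfrak{g}_\C^{(v,\xi)}$, by the standard identification of $dJ_\C$ with the transpose of the infinitesimal action. Since $G_\C$ is a torus, this singular set is a finite union of intersections $W \cap Z^\C$, where $W = (V \times V^*)^{T^\prime_\C}$ is the fixed coordinate subspace of a positive-dimensional connected subtorus $T^\prime_\C \leq G_\C$ of rank $k \geq 1$. Writing $W = V_0 \times V_0^*$ with $\dim_\C V_0 = m$, the FPIG hypothesis for $V$ combined with effectivity yields the key estimate $n - m \geq k + 1$: the positive dependence $\sum_j c_j A_j = 0$ with $c_j > 0$ among the weights projects to a positive dependence among the $n - m$ weights with nontrivial restriction to $\mathfrak{t}^\prime$, and these restrictions also span $\mathfrak{t}^{\prime *} \cong \R^k$, so at least $k+1$ of them are present. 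Applying the $S_2$ dimension count to the induced $G_\C/T^\prime_\C$-representation on $V_0$ (reducing further via Lemma \ref{lem:ReducedCoordinateRestrict} if needed) gives $\dim_\C(W \cap Z^\C) \leq 2m - (r - k) \leq 2n - r - k - 2$, hence codimension at least $k + 2 \geq 3$ in $Z^\C$. Serre's criterion then yields normality of $A$.

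The main technical obstacle is the codimension estimate in the $R_1$ step. The induced $G_\C/T^\prime_\C$-representation on $V_0$ may fail to satisfy FPIG, in which case the bound $\dim_\C(W \cap Z^\C) = 2m - (r-k)$ does not follow directly from the $S_2$ count; one must iterate Lemma \ref{lem:ReducedCoordinateRestrict} on $V_0$ and show that the shell of the further-reduced subrepresentation only shrinks in dimension, preserving the desired upper bound. The combinatorial inequality $n - m \geq k + 1$ coming from FPIG and effectivity is the crucial input that drives the estimate in all cases.
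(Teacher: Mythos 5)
Your overall architecture coincides with the paper's: reduce to the effective case with FPIG via Lemma \ref{lem:ReducedCoordinateRestrict}, prove that the coordinate ring of the complex shell $Z^\C$ is normal, and then pass to $G_\C$-invariants (the paper cites \cite[Proposition 6.4.1]{BrunsHerzog} for that last step). Where you genuinely diverge is in how normality of the shell is obtained: the paper cites \cite[Theorem (0.4)(4)]{GWSlifting} to conclude that $V$ is $2$-large and then \cite[Theorem 2.2(2)]{HerbigSchwarz} for normality, whereas you verify Serre's criterion by hand using the combinatorics of torus weights. Your $R_1$ step is correct and is essentially the content of largeness in this setting: $dJ_\C$ drops rank exactly at points with positive-dimensional stabilizer, such points lie on coordinate subspaces $V_0\times V_0^\ast$ fixed by a subtorus of rank $k\geq 1$, and the inequality $n-m\geq k+1$ (which does follow from the all-positive linear dependence among the weights supplied by Lemma \ref{lem:ReducedCoordinateRestrict} together with effectivity) gives codimension at least $k+2$ in $Z^\C$. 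This is a legitimate, more elementary and self-contained route, at the cost of replacing two citations by explicit weight combinatorics.

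Two points need repair before this is a proof. First, you define $A=\C[V\times V^\ast]/\mathcal{I}_{Z^\C}$ with the \emph{vanishing} ideal and then assert that $A$ is a complete intersection; the complete intersection is $\C[V\times V^\ast]/(J_1,\dots,J_r)$, and identifying the two presupposes that the ideal generated by the moment map components is radical --- itself a nontrivial conclusion of the largeness machinery you are trying to avoid. The fix is to run Serre's criterion on the complete intersection ring itself: $R_1$ and $S_2$ give normality, hence reducedness, and only then does that ring coincide with $A$. Second, the $S_2$ dimension count $\dim_\C Z^\C = 2n-r$ is asserted rather than proved, and stability is not the operative reason; for an effective torus action it follows from an elementary rank count, writing $Z^\C=\mu^{-1}(\ker B)$ with $\mu(z,w)=(z_jw_j)_j$ and $B$ the weight matrix, stratifying $\ker B$ by coordinate supports, and noting that deleting columns from $B$ drops its rank by at most the number of columns deleted. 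The same count shows $\dim\big((V_0\times V_0^\ast)\cap Z^\C\big)\leq 2m-(r-k)$ for \emph{any} effective torus action, so the ``main technical obstacle'' you flag --- possible failure of FPIG for the induced representation on $V_0$ --- does not actually arise, and the iteration of Lemma \ref{lem:ReducedCoordinateRestrict} you propose there (which concerns the real shell, not the complex one) is neither needed nor the right tool.
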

\begin{proof}
We assume without loss of generality that the $G$-action on $V$ is effective and, by Lemma \ref{lem:ReducedCoordinateRestrict},
that there is a point $v \in J^{-1}(0)$ with principal $G$-isotropy group; the effective assumption then implies that
the $G$-isotropy group of $v$ is trivial.  Then as $J^{-1}(0)$ is a Kempf-Ness set, it follows from
\cite[Theorem (4.2)]{GWSkempfNess} that $v$ has trivial $G_\C$-isotropy and closed $G_\C$-orbit.
Hence $V$ has FPIG as a $G_\C$-variety.

By \cite[Theorem (0.4)(4)]{GWSlifting}, $V$ is \emph{$2$-large} (see \cite{GWSlifting} for the definition),
and then by \cite[Theorem 2.2 (2)]{HerbigSchwarz},
$\C[V \times V^\ast]/\mathcal{I}_{Z^\C}$ is normal.  Finally, by \cite[Proposition 6.4.1]{BrunsHerzog}, the quotient
$\big(\C[V \times V^\ast]/\mathcal{I}_{Z^\C}\big)^{G_\C} = \C[V \times V^\ast]^{G_\C}/\mathcal{I}_{Z^\C}^{G_\C}$
is normal, completing the proof; confer Remark \ref{rem:ComplexRepresentation}.
\end{proof}

\begin{remark}
\label{rem:2largeDef}
The reader is cautioned that the definition of $2$-large in \cite{GWSlifting} is stronger than the definition
used in \cite{HerbigSchwarz}.  Specifically, the definition in \cite{GWSlifting} requires FPIG, while that in
\cite{HerbigSchwarz} does not.  This causes no issue in the above argument as FPIG has been established.
\end{remark}

\begin{remark}
\label{rem:NormalityGeneral}
Theorem \ref{thrm:ComplexRedNormal} essentially demonstrates that the complex symplectic reduced space of an
effective linear torus action is normal by proving that the zero fiber of the complex moment map is a normal
variety.  For a reductive group $G_\C$ and a $G_\C$-variety $X$, the normality of $X$ is sufficient though
not necessary for the normality of $X/\!\!/G_\C$, see \cite[Section 7]{CrawleyBoevey}.
By Serre's Criterion \cite[Theorem 23.8]{MatsumuraBook}, a Cohen--Macaulay ring is normal if and only if it is
regular in codimension $1$.  In our case, $\C[V\times V^\ast]^{G_\C}/\mathcal{I}_{Z^\C}^{G_\C}$
is always Cohen--Macaulay.  To see this, note that $\C[V\times V^\ast]^{G_\C}$ is Cohen--Macaulay by
\cite[Corollary 1]{HochsterTori}, and then the quotient ring is Cohen--Macaulay by
\cite[Theorem 2.1.3]{BrunsHerzog}.
Hence, $\C[V\times V^\ast]^{G_\C}/\mathcal{I}_{Z^\C}^{G_\C}$ is normal if and only if the singular set
of the complex symplectic reduced space has codimension $\geq 2$.  We do not know of an independent proof
of this codimension condition.
\end{remark}

Now assume that $K$ is finite so that $N_0 = W/K$ is a linear symplectic orbifold.  We have the following,
completing this section.

\begin{theorem}
\label{thrm:RestrictStratumFinite}
With the notation as above, assume that $G$ is a torus and $K$ is finite.  For each isotropy group $\Gamma$
of the $K$-action on $W$ and corresponding orbit type stratum $S$ of the linear symplectic orbifold $N_0$,
the global chart for $S^{cl}$ given by the restriction of  $\psi$ (or $\varphi^\prime$) is equivalent
to a global chart associated to Hilbert bases for $\R[W^\Gamma]^{N_K(\Gamma)/\Gamma}$.
\end{theorem}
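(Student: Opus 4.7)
The plan is to reduce the statement to showing that the image $A := \operatorname{Im}(\psi_{S^{cl}}) \subseteq \R[W^\Gamma]^{L^\prime} =: B$, where $L^\prime := N_K(\Gamma)/\Gamma$, is in fact all of $B$, from which the claimed equivalence of charts follows at once. I will first apply Theorem \ref{thrm:RestrictStratumGeneral} together with Lemma \ref{lem:NewGlobalChart} to restrict $\chi$ to a $\Z$-graded regular symplectomorphism $\chi|_{S^{cl}}\co S^{cl} \to R^{cl}$, where $R^{cl}$ is the closure of an orbit type stratum of $M_0$ corresponding to some isotropy subgroup $H < G$, and the arrow inducing it is the identity on $\R[\bs{x}]$. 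Because $G$ is a torus, Lemma \ref{lem:RestrictStratAlgAbelian} identifies $\operatorname{Im}(\varphi_{R^{cl}}) = \R[V^H]^L/\mathcal{I}_{Z_H}^L$, where $L = N_G(H)/H$. Since the inducing arrow is the identity, $\ker(\psi_{S^{cl}}) = \ker(\varphi_{R^{cl}})$, so that $A \cong \R[V^H]^L/\mathcal{I}_{Z_H}^L$ as $\Z$-graded $\R$-algebras.

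Next, I would complexify and apply Theorem \ref{thrm:ComplexRedNormal} to the induced torus $L$-representation on $V^H$ (after invoking Lemma \ref{lem:ReducedCoordinateRestrict} to guarantee FPIG on an appropriate subrepresentation, which does not alter the reduced space). This shows that $A_\C := A \otimes_\R \C$ is normal. On the other hand, $B_\C = \R[W^\Gamma]^{L^\prime}\otimes_\R\C$ is identified with $\C[W^\Gamma \oplus \overline{W^\Gamma}]^{L^\prime}$, the invariant ring of the finite group $L^\prime$ acting on a smooth complex affine variety, which is normal by standard invariant theory. Moreover, the global chart axioms for $\psi$ ensure that the generators $\psi_i|_{S^{cl}}$ separate $L^\prime$-orbits in $W^\Gamma$ and furnish a proper closed embedding of $S^{cl}$ into $\R^k$; by classical invariant theory for finite group actions on affine spaces, this forces $B_\C$ to be integral and finite over $A_\C$, with $\operatorname{Frac}(A_\C) = \operatorname{Frac}(B_\C)$ equal to the field $\C(W^\Gamma \oplus \overline{W^\Gamma})^{L^\prime}$ of $L^\prime$-invariant rational functions.

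Combining these ingredients, $B_\C$ lies in the integral closure of $A_\C$ within their common fraction field; by normality of $A_\C$ this integral closure is $A_\C$ itself, forcing $B_\C \subseteq A_\C$ and hence $A_\C = B_\C$. Taking $\R$-fixed parts under complex conjugation yields $A = B$ as subalgebras of $\calC^\infty(S^{cl})$, which is precisely the equivalence of $\psi_{S^{cl}}$ with any global chart associated to a Hilbert basis for $\R[W^\Gamma]^{L^\prime}$. The main technical hurdle will be establishing the fraction field equality $\operatorname{Frac}(A_\C) = \operatorname{Frac}(B_\C)$: real orbit separation is built into the global chart definition, but transferring it to generic injectivity of the induced map of complex affine varieties requires care. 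The key observation is that for a finite group acting on a complex vector space, every separating set of polynomial invariants already generates a subfield equal to the full field of invariant rational functions, so that $A_\C$ and $B_\C$ automatically share their fraction field; once this is granted, the normality argument closes the proof.
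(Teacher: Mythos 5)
Your overall architecture matches the paper's: identify $A=\operatorname{Im}(\psi_{S^{cl}})$ with $\R[V^H]^L/\mathcal{I}_{Z_H}^L$ via the identity arrow, complexify, invoke Theorem \ref{thrm:ComplexRedNormal} to get normality of $A_\C$, and conclude $A_\C=B_\C$ by a normalization argument in the spirit of \cite[Theorem 2.3.12]{DerskenKemperBook}. The gap is precisely the step you flag as the ``main technical hurdle'' and then wave away: you need $A_\C$ to be a \emph{separating} subalgebra of $\C[(W\times W^\ast)^\Gamma]^{N_K(\Gamma)}$ in the Derksen--Kemper sense, i.e.\ separating $N_K(\Gamma)$-orbits of \emph{complex} points, before either the fraction-field equality or the integrality of $B_\C$ over $A_\C$ is available. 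The global chart axioms only give separation of points of $S^{cl}$, that is, of real points (the fixed locus of complex conjugation in $(W\times W^\ast)^\Gamma$). Real-point separation does not formally imply complex-orbit separation: the locus of pairs $(x,y)$ not separated by $A_\C$ is Zariski closed, and knowing that its real points lie inside the orbit-equivalence locus says nothing about components with few or no real points. Your ``key observation'' is a true statement about subalgebras already known to be separating over $\C$; invoking it to \emph{establish} that $A_\C$ is separating is circular.

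The paper closes exactly this gap with two inputs absent from your outline: (a) the complexified arrow $\overline{\lambda}_\C\co\C[V\times V^\ast]^{G_\C}/\mathcal{I}_{Z^\C}\to\C[W\times W^\ast]^{K}$ is an isomorphism, so every complex $K$-invariant $F$ restricts, through the commuting square with $\kappa_\C$, to an element of $A_\C$ on $(W\times W^\ast)^\Gamma$; and (b) restrictions of $K$-invariants separate $N_K(\Gamma)$-orbits in $(W\times W^\ast)^\Gamma$, because the natural map $\eta\co (W\times W^\ast)^\Gamma/N_K(\Gamma)\to (W\times W^\ast)/K$ is injective and, using normality of the source together with birationality, an isomorphism onto its image. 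If you supply these (or an equivalent mechanism producing enough elements of $A_\C$ to separate complex orbits), the remainder of your normalization argument goes through as written.
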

\begin{proof}
Let $\Gamma$ be an isotropy group for the action of $K$ on $W$.  The corresponding stratum $S$ is given by $W_{\Gamma}/N_K(\Gamma)$,
and its closure $S^{cl}$ is $W^\Gamma/N_K(\Gamma)$.  As above, the image of $\psi_{S^{cl}}$ is
a subalgebra $\calA$ of $\R[W^\Gamma]^{N_K(\Gamma)/\Gamma}$.
Let $R = \chi(S)$, and then $R$ is an orbit type stratum of $M_0$ by Theorem \ref{thrm:RestrictStratumGeneral}.
Let $H$ be the isotropy group of a point in $V$ whose orbit is in $R$; note that $H$ does not depend on the choice
of point as $G$ is abelian.
By Theorem \ref{thrm:RestrictStratumGeneral}, the identity arrow on $\R[\bs{x}]$ induces an isomorphism between
the image of $\varphi_{R^{cl}}$ and $\calA$, and by Lemma \ref{lem:RestrictStratAlgAbelian}, the image of
$\varphi_{R^{cl}}$ is equal to $\R[V^H]^L/\mathcal{I}_{Z_H}^L$.  It follows that the identity arrow
on $\R[\bs{x}]$ induces an isomorphism $\kappa\co\R[V^H]^L/\mathcal{I}_{Z_H}^L \to \calA$.
Tensoring with $\C$, we have an isomorphism
$\kappa_\C\co (\R[V^H]^L/\mathcal{I}_{Z_H}^L)\otimes_\R \C\to\calA \otimes_\R \C=:\calA_{\C}$.

Noting that $(V\times\bar{V})^H \otimes_\R\C = (V \times V^\ast)^{H_\C}$, we have by \cite[Proposition 5.8(1)]{GWSliftingHomotopies}
that $\R[V^H]^L = \big(\C[(V \times \bar{V})^H]^{L_\C}\big)^-$, and
$\R[V^H]^L\otimes_\R\C = \C[(V \times V^\ast)^{H_\C}]^{L_\C}$.
Then we have $(\R[V^H]^L/\mathcal{I}_{Z_H}^L)\otimes_\R \C = \C[(V \times V^\ast)^{H_\C}]^{L_\C}/\mathcal{I}_{Z_H^\C}^{L_\C}$.
In the same way, we may consider $\calA_{\C}$ as a subalgebra of
$\C[(W \times W^\ast)^\Gamma]^{N_K(\Gamma)} = (\R[W^\Gamma]^{N_K(\Gamma)/\Gamma})\otimes_\R\C$.
That is, $\kappa_\C$ can be viewed as an isomorphism
\[
    \kappa_\C\co
    \C[(V \times V^\ast)^H]^{L_\C}/\mathcal{I}_{Z_H^\C}^{L_\C}
    \to
    \calA_{\C},
\]
and as $\C[(V \times V^\ast)^H]^{L_\C}/\mathcal{I}_{Z_H^\C}^{L_\C}$ is normal by
Lemma \ref{thrm:ComplexRedNormal}, $\calA_{\C}$ is normal.

Finally, we claim that the algebra $\calA_\C$ is separating in $\C[(W \times W^\ast)^\Gamma]^{N_K(\Gamma)}$
in the sense of \cite[Definition 2.3.8]{DerskenKemperBook}, see also \cite{DufresneThesis}.  That is, for
$x, y \in (W\times W^\ast)^\Gamma$, if there is an $f \in \C[(W \times W^\ast)^\Gamma]^{N_K(\Gamma)}$ such that
$f(x) \neq f(y)$, then there is a $g \in \calA_\C$ such that $g(x) \neq g(y)$.
Because $\C[(W \times W^\ast)^\Gamma]^{N_K(\Gamma)}$ separates points
in the complex geometric quotient $(W \times  W^\ast)^\Gamma/(N_K(\Gamma))$ by
\cite[Theorem 1.1, Amplification 1.3]{MumfordFogarty}, it follows that this is equivalent to
$\calA_\C$ separating $N_K(\Gamma)$-orbits in $(W \times  W^\ast)^\Gamma$.

To see this, first note that the map $\eta\co (W \times  W^\ast)^\Gamma/(N_K(\Gamma)) \to (W\times W^\ast)/K$
induced by the embedding $(W \times  W^\ast)^\Gamma \to W\times W^\ast$ is injective, see \cite[page 100]{KLMR}.
Specifically, because $K_{hx} = hK_x h^{-1}$ for $x \in W \times W^\ast$ and $h \in K$, $\eta$
is obviously an injective regular map on the open dense subset $(W \times  W^\ast)_\Gamma/(N_K(\Gamma))$ of
$(W \times  W^\ast)^\Gamma/(N_K(\Gamma))$ and hence birational.  Then the normality of
$(W \times  W^\ast)^\Gamma/(N_K(\Gamma))$ implies that $\eta$ is an isomorphism of algebraic varieties;
see \cite[II. Section 2.7]{EncMathSciAlgeGeomI}.
Choosing $x, y \in (W \times  W^\ast)^\Gamma$ with distinct orbits $N_K(\Gamma)x \neq N_K(\Gamma)y$, it
follows that $Kx \neq Ky$ in $W \times  W^\ast$.  Then as $\C[W\times W^\ast]^K$ separates
$K$-orbits in $W\times W^\ast$ (again by \cite{MumfordFogarty}),
there is an $F \in \C[W\times W^\ast]^K$ such that $F(x) \neq F(y)$.

Again by \cite[Proposition 5.8(1)]{GWSliftingHomotopies},
the isomorphism $\overline{\lambda}\co\R[V]^G/\mathcal{I}_Z \to \R[W]^\Gamma$
given by the arrow $\lambda$ induces an isomorphism
$\overline{\lambda}_\C\co \C[V\times V^\ast]^{G_\C}/\mathcal{I}_{Z^\C} \to \C[W\times W^\ast]^\Gamma$.
It follows that there is a $g \in \C[V\times V^\ast]^{G_\C}$ such that
$\overline{\lambda}(g \mathcal{I}_{Z^\C}) = F$.  Let $g_{H}$ denote the restriction of $g$ to
$(V\times V^\ast)^H$, and then $\kappa_\C(g_{H}\mathcal{I}_{Z_H^\C}^{L_\C})$ is equal to the restriction
$f:=F_{|(W\times W^\ast)^\Gamma}$ of $F$ to $(W\times W^\ast)^\Gamma$.  Then $f(x) = F(x) \neq F(y) = f(y)$
so that $\calA_\C$ is separating.

By \cite[Theorem 2.3.12]{DerskenKemperBook}, $\C[(W \times  W^\ast)^\Gamma]^{N_K(\Gamma)}$ is equal to
the normalization of $\calA_\C$.  As $\calA_\C$ is normal, we obtain
$\C[(W \times  W^\ast)^\Gamma]^{N_K(\Gamma)} = \calA_\C$.  This clearly implies
$\R[W^\Gamma]^{N_K(\Gamma)/\Gamma} = \calA$ so that $\psi_{S^{cl}}$ is surjective, completing the proof.
\end{proof}

\section{Reduction to the case $n=3$}
\label{sec:n3Red}

In this section, we will use the results of Section \ref{sec:generalRed} to demonstrate that any reduced
space $M_0$ satisfying the hypotheses of Theorem \ref{mainthm} and $\Z$-graded regularly symplectomorphic
to a linear symplectic orbifold $\C^{n-1}/\Gamma$ for a finite subgroup $\Gamma < \U_{n-1}$
contains as a subset the reduced space associated to an $\Sp^1$-action on $\C^3$,
which is then $\Z$-graded regularly symplectomorphic to $\C^2/\Gamma^\prime$ for some finite $\Gamma^\prime < \U_2$.
In Section \ref{sec:nis3}, we will show that no such $\Gamma^\prime$ can exist.

Let $A = (a_1, \ldots, a_n)$ be a weight vector for a unitary action of $\Sp^1$ on $\C^n$.
We make the following assumptions.
\begin{itemize}
\item[(i)]      There is a $\Z$-graded regular symplectomorphism $\chi\co M_0 \to \C^{n-1}/\Gamma$
                from the reduced space $M_0$ associated to $A$ and a linear symplectic orbifold $\C^{n-1}/\Gamma$
                for a finite subgroup $\Gamma < \U_{n-1}$.
\item[(ii)]     Each weight $a_i \neq 0$, i.e. $(\C^n)^{\Sp^1} = \{ 0 \}$.
\item[(iii)]    The weight $a_1$ is the only negative weight.
\end{itemize}

\begin{remark}
\label{rem:ZeroWeights}
Recall that permuting the weights of the weight vector $A$ does not change the $\Z$-graded regular symplectomorphism
class of the associated reduced space $M_0$,
and suppose $A = (a_1, \ldots, a_k, 0, \ldots, 0)$ with $a_i \neq 0$ for $i \leq k$.  It is easy to see that
$M_0$ is $\Z$-graded regularly symplectomorphic to $N_0 \times \C^{n-k}$ where $N_0$ is the reduced space
associated to the weight vector $(a_1, \ldots, a_k)$.  As
$\R[\C^n]^{\Sp^1}$ contains $2(n-k)$ linear invariants and the moment
map $J$ is homogeneous quadratic, $\R[M_0]$ must also contain
$2(n-k)$ linear invariants.  Then it must be that $\R[\C^{n-1}]^\Gamma$ contains $2(n-k)$ linear invariants as well,
implying that $(\C^{n-1})^\Gamma$ is of complex dimension $n-k$.  Decomposing $\C^{n-1}$ into a product
$\C^{k-1} \times (\C^{n-1})^\Gamma \cong \C^{k-1} \times \C^{n-k}$, it is easy to see that $N_0$ is $\Z$-graded
regularly symplectomorphic to $\C^{k-1}/\Gamma^{\prime\prime} \times \C^{n-k}$ for some $\Gamma^{\prime\prime} \in \U_{k-1}$.
It follows that $N_0$ is $\Z$-graded regularly  symplectomorphic to $\C^{k-1}/\Gamma^{\prime\prime}$.
Hence assumption (ii) introduces no loss of generality.
\end{remark}

\begin{remark}
\label{rem:NegativeA1}
By \cite[Theorems 3 and 4]{FarHerSea} (see also \cite[Proposition 3.1]{HerbigIyengarPflaum}), we have that $M_0$ is
a rational homology manifold if and only if $A$ does not contain two or more positive and two or more negative weights.
Note that as each element $g \in \Gamma$ preserves the complex structure of $\C^{n-1}$ and hence the induced orientation of the
underlying real vector space $\R^{2(n-1)}$, the complex orbifold $\C^{n-1}/\Gamma$ is necessarily a rational homology manifold
by \cite[4.2.4]{Haefliger}. If all weights have the same sign, then $M_0$ is a point, and multiplying $A$ by $-1$ or permuting
the weights does not change the reduced space $M_0$.  Hence we may assume (iii) with no loss of generality as well.
\end{remark}

Let $t \in \Sp^1$, and then it is easy to see that $t$ fixes a nonzero point $\bs{z} = (z_1, \ldots, z_n) \in \C^n$ if and only if
$t$ is a $k$th root of unity for some integer $k$, and $k$ divides $a_i$ for each $i$ such that $z_i \neq 0$.  With this in mind,
for a subset $I \subseteq \{ 1, 2, \ldots, n \}$,
let $V_I = \{ (z_1, \ldots, z_n) \in \C^n : z_i \neq 0 \Leftrightarrow i \in I \}$,
and let $V_I^{cl} = \{ (z_1, \ldots, z_n) \in \C^n : z_i \neq 0 \Rightarrow i \in I \}$
denote the closure of $V_I$ in $\C^n$.  We let $V_\emptyset = V_\emptyset^{cl} = \{ 0 \}$.
Then, given a $t \in \Sp^1$ of finite order $k$, letting $I_k = \{ i : k|a_i \}$, we have that
$(\C^n)^t = V_{I_k}^{cl}$.  Moreover, for the cyclic subgroup $\Z_k$ of $\Sp^1$, it is easy to see
that $(\C^n)_{\Z_k}$, the set of points with isotropy group equal to $\Z_k$, is given by
$\{ \bs{z} : \gcd\{a_i : z_i \neq 0\} = k \}$, and the closure $(\C^n)_{\Z_k}^{cl}$ of this orbit type is $V_{I_k}^{cl}$.

By Equation \eqref{eq:MomentMap} and assumption (iii), it is clear that $V_I \cap Z \neq \emptyset$ if and only if
$I = \emptyset$ or $\{ 1 \} \subsetneq I$.  Hence, an orbit type $(\C^n)_{\Z_k}$ intersects $Z$ if and only if
there is an $I \subseteq \{ 1, 2, \ldots, n \}$ such that $\{ 1 \} \subsetneq I$ and $\gcd\{ a_i : i \in I \} = k$.

As a consequence of the assumption (i), we have that the Hilbert series $\Hilb_{A}^{\on}(x)$ and $\Hilb_{\R[\C^{n-1}]^\Gamma|\R}(x)$
coincide.  Then by Corollary \ref{cor:Gamma2Positive}, $\Gamma$ must contain at least one pseudoreflection.
It follows that $\C^{n-1}/\Gamma$ contains a (complex)
codimension-$1$ orbit type stratum, and that a corresponding isotropy group is generated by a pseudoreflection $g \in \Gamma$.
Choosing a basis for $\C^{n-1}$ such that $g = \diag(1, \ldots, 1, \lambda)$ for a root of unity $\lambda$, and identifying
$\C^{n-2}$ with the span of the first $n-2$ elements of this basis, this orbit type stratum is given by
$\C^{n-2}/(N_\Gamma(\langle g\rangle)/\langle g\rangle)$.  Note that the elements of
$N_\Gamma(\langle g\rangle)$ clearly fix the Hermitian product on $\C^{n-2}$ (restricted from $\C^{n-1}$) so that
$(N_\Gamma(\langle g\rangle)/\langle g\rangle) < \U_{n-2}$.

By Theorem \ref{thrm:RestrictStratumGeneral}, $M_0$ must also contain a (complex) codimension-$1$ orbit type stratum.
Considering the orbit types of the $\Sp^1$-action on $\C^n$, the closure of this stratum must be given by $V_{\{ i \}^c}^{cl}$
for some $i \neq 1$, where $^c$ denotes the complement (or simply the origin $\{ 0 \}$ if $n = 2$).
If $\varphi\co\R[\bs{x}] \to\calC^\infty(M_0)$ is a global chart for $M_0$ associated
to a Hilbert basis for $\R[\C^n]^{\Sp^1}$, then by Lemma \ref{lem:RestrictStratAlgAbelian}, $\varphi$ restricts to a
global chart for $(V_{\{ 1 \}^c}^{cl} \cap Z)/\Sp^1$ equivalent to a global chart associated to a Hilbert basis for
$\R[V_{\{ 1 \}^c}^{cl}]^{\Sp^1}$ with the restricted action.  Note that in this case, it is easy to see directly that
$(V_{\{ 1 \}^c}^{cl} \cap Z)/\Sp^1$ is the reduced space associated to the $\Sp^1$-action on $\C^{n-1}$ whose weight vector
is formed by removing $a_i$ from $A$.  Finally, Theorem \ref{thrm:RestrictStratumFinite} yields a $\Z$-graded regular
symplectomorphism between $(V_{\{ 1 \}^c}^{cl} \cap Z)/\Sp^1$ and $\C^{n-2}/(N_\Gamma(\langle g\rangle)/\langle g\rangle)$
where the global charts are associated to Hilbert bases for the respective actions.  With this, we have established the
following.

\begin{proposition}
\label{prop:ReductionStep}
Let $n \geq 2$, let $A$ be a weight vector satisfying (i), (ii), and (iii), and let $M_0$ denote the associated reduced space.
Then $M_0$ has a complex codimension-$1$ orbit type $S$.
Moreover, the closure $S^{cl}$ is itself the reduced space associated to a linear representation of
$\Sp^1$ on $\C^{n-1}$ satisfying (ii) and (iii), and $\chi$ restricts to a $\Z$-graded regular symplectomorphism
to the quotient of $\C^{n-2}$ by a finite subgroup of $\U_{n-2}$.
\end{proposition}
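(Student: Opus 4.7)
The plan is to follow closely the discussion immediately preceding the statement. My strategy has three main steps: first, locate a complex codimension-$1$ orbit type stratum in $\C^{n-1}/\Gamma$ using the existence of pseudoreflections in $\Gamma$; second, pull this stratum back across $\chi$ to $M_0$ and identify it with a stratum of the form $(V_{\{i\}^c}^{cl}\cap Z)/\Sp^1$; and third, invoke the restriction theorems of Section \ref{sec:generalRed} to promote the restricted symplectomorphism to one between finite quotients with global charts coming from Hilbert bases.

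For the first step, I observe that a $\Z$-graded regular symplectomorphism induces an isomorphism of the $\Z$-graded algebras of regular functions, and hence preserves Hilbert series. Together with condition (i), this gives $\Hilb_A^{\on}(x) = \Hilb_{\R[\C^{n-1}]^\Gamma|\R}(x)$, so by Corollary \ref{cor:Gamma2Positive} the group $\Gamma$ must contain at least one pseudoreflection $g$. I choose an orthonormal basis of $\C^{n-1}$ diagonalizing $g$ so that $g = \diag(1,\ldots,1,\lambda)$ for a root of unity $\lambda$; its fixed hyperplane is a copy of $\C^{n-2}$. Because $N_\Gamma(\langle g\rangle)$ preserves both the $g$-eigenspace decomposition and the restricted Hermitian product, the closure of the corresponding orbit type stratum of $\C^{n-1}/\Gamma$ is $\C^{n-2}/(N_\Gamma(\langle g\rangle)/\langle g\rangle)$ with $N_\Gamma(\langle g\rangle)/\langle g\rangle < \U_{n-2}$, of complex codimension one.

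For the remaining steps, I apply Theorem \ref{thrm:RestrictStratumGeneral} to transport this stratum across $\chi$ to the closure of an orbit type stratum $S$ of $M_0$. By the description of $\Sp^1$-orbit types recorded immediately before the proposition, $S^{cl}$ must equal $(V_I^{cl}\cap Z)/\Sp^1$ for some subset $I$ with $\{1\}\subsetneq I$, and the codimension-$1$ condition forces $|I|=n-1$, so $I=\{i\}^c$ for some $i$. The subtle point I expect to be the main obstacle is that one must have $i\neq 1$; this is exactly where condition (iii) enters, since removing $a_1$ (the sole negative weight) would render the restricted moment map positive definite, collapsing $V_{\{1\}^c}^{cl}\cap Z$ to $\{0\}$ and producing a $0$-dimensional quotient rather than a codimension-$1$ one. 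Once $i\neq 1$ is established, $(V_{\{i\}^c}^{cl}\cap Z)/\Sp^1$ is manifestly the reduced space of the $\Sp^1$-action on $\C^{n-1}$ with the weight vector $A'$ obtained by deleting $a_i$ from $A$, and $A'$ clearly inherits conditions (ii) and (iii). Finally, Lemma \ref{lem:RestrictStratAlgAbelian} together with Theorem \ref{thrm:RestrictStratumFinite} ensures that the restricted global charts on both sides are equivalent to ones associated to Hilbert bases, yielding the required $\Z$-graded regular symplectomorphism $S^{cl} \to \C^{n-2}/(N_\Gamma(\langle g\rangle)/\langle g\rangle)$.
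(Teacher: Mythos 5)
Your proposal is correct and follows essentially the same route as the paper: Corollary \ref{cor:Gamma2Positive} produces a pseudoreflection and hence a complex codimension-$1$ stratum $\C^{n-2}/(N_\Gamma(\langle g\rangle)/\langle g\rangle)$ of $\C^{n-1}/\Gamma$, Theorem \ref{thrm:RestrictStratumGeneral} transports it to a stratum of $M_0$ whose closure the orbit-type analysis identifies as the reduced space for the weight vector with $a_i$ ($i\neq 1$) deleted, and Lemma \ref{lem:RestrictStratAlgAbelian} with Theorem \ref{thrm:RestrictStratumFinite} supplies the Hilbert-basis global charts. Your direct justification that $i\neq 1$ (positive definiteness of the restricted moment map) is exactly the observation the paper encodes in its earlier remark that $V_I\cap Z\neq\emptyset$ only when $\{1\}\subsetneq I$.
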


By repeated application of Proposition \ref{prop:ReductionStep}, the following is immediate.

\begin{corollary}
\label{cor:ReduceToNis3}
Let $n \geq 3$, let $A$ be a weight vector satisfying (i), (ii), and (iii), and let $M_0$ denote the associated reduced space.
Then $M_0$ contains as the closure of an orbit type a linear symplectic quotient of $\C^3$ by $\Sp^1$ satisfying (ii) and (iii)
that is $\Z$-graded regularly symplectomorphic to a linear symplectic orbifold $\C^2/\Gamma^\prime$ where $\Gamma^\prime < \U_2$
is finite and $\C^2/\Gamma^\prime$ is equipped with a global chart associated to a Hilbert basis for $\R[\C^2]^{\Gamma^\prime}$.
\end{corollary}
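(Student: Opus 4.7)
The plan is a straightforward induction on $n \geq 3$, with Proposition \ref{prop:ReductionStep} doing essentially all of the work; the corollary is its iteration. For the base case $n = 3$, take $\Gamma^\prime = \Gamma$: then $M_0$ itself realizes the conclusion, being already a linear symplectic $\Sp^1$-quotient of $\C^3$ satisfying (ii) and (iii) and, by (i), $\Z$-graded regularly symplectomorphic to $\C^2/\Gamma$. The only nontrivial point is that the global chart on $\C^2/\Gamma$ supplied by (i) need not \emph{a priori} be associated to a Hilbert basis, but this is remedied by applying Theorem \ref{thrm:RestrictStratumFinite} with the isotropy subgroup taken to be the trivial one (i.e., to the principal stratum of $\C^2/\Gamma$): the theorem asserts that $\varphi^\prime$ is equivalent to a global chart associated to a Hilbert basis for $\R[\C^2]^\Gamma$.

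For the inductive step, suppose $n \geq 4$ and assume the corollary holds at level $n-1$. Applying Proposition \ref{prop:ReductionStep} to $A$ produces a complex codimension-$1$ orbit type stratum $S$ of $M_0$ whose closure $S^{cl}$ is itself the reduced space of a unitary $\Sp^1$-action on $\C^{n-1}$ with weight vector $A^\prime$ satisfying (ii) and (iii), and such that $\chi$ restricts to a $\Z$-graded regular symplectomorphism $\chi|_{S^{cl}}$ from $S^{cl}$ onto some $\C^{n-2}/\Gamma^{\prime\prime}$ with $\Gamma^{\prime\prime} < \U_{n-2}$ finite. Hence the triple $(A^\prime, S^{cl}, \chi|_{S^{cl}})$ satisfies (i)--(iii), and the inductive hypothesis produces, inside $S^{cl}$, the closure of an orbit type stratum (of $S^{cl}$ viewed as an $\Sp^1$-reduced space) that is a linear symplectic $\Sp^1$-quotient of $\C^3$ $\Z$-graded regularly symplectomorphic to some $\C^2/\Gamma^\prime$ with $\Gamma^\prime < \U_2$ finite and with a Hilbert-basis global chart.

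The one small item to verify is that a closure of an orbit type stratum of $S^{cl}$ is automatically the closure of an orbit type stratum of the ambient $M_0$. This is immediate from the explicit description recalled in Section \ref{sec:n3Red}: each such closure has the form $(V_{I_k}^{cl} \cap Z)/\Sp^1$ for some $I_k \subseteq \{1, \ldots, n\}$ with $\{1\} \subsetneq I_k$, and the orbit-type-stratum closures in $S^{cl}$ correspond precisely to those $I_k$ containing the index set defining $S^{cl}$. Inclusions of these subsets of $\C^n$ descend to inclusions in the $\Sp^1$-quotient, so the object produced inductively inside $S^{cl}$ is the desired orbit-type-stratum closure in $M_0$. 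This closes the induction. There is no real obstacle: the substantive content of the corollary has been entirely absorbed into Proposition \ref{prop:ReductionStep}, and what remains is a bookkeeping argument.
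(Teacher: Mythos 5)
Your proposal is correct and follows essentially the same route as the paper, which disposes of the corollary in one line as ``immediate by repeated application of Proposition \ref{prop:ReductionStep}''; you have simply written out the induction explicitly, and the two bookkeeping points you address (that the restricted data again satisfy (i)--(iii) with Hilbert-basis charts, via Theorem \ref{thrm:RestrictStratumFinite}, and that stratum closures nest correctly) are exactly what the paper's phrasing implicitly relies on.
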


Of course, Proposition \ref{prop:ReductionStep} also guarantees an orbifold stratum of complex dimension $1$
given by the reduced space of an $\Sp^1$-action on $\C^2$.  Such a reduced space is always an orbifold by \cite[Theorem 7]{FarHerSea},
explaining why we stop at $n = 3$.
Note, however, that many examples of linear $\Sp^1$ reduced spaces are already excluded as non-orbifolds
by Proposition \ref{prop:ReductionStep}.  In particular, for a weight vector $A$ satisfying (i), (ii), and (iii),
if $M_0$ has a complex codimension-$1$ orbit type, it must be that there is an $i_1\neq 1$ such
that $\gcd \{ a_j : j \neq i_1\}> \gcd(a_1, \ldots, a_n)$.
For the closure of this orbit type in turn to contain a
complex codimension $1$-orbit type, it must be that there is an $i_2 \neq 1, i_1$ such that
$\gcd\{a_j : j \neq i_1, i_2 \} > \gcd\{ a_j : j \neq i_1\}$.
Then by induction, we obtain the following.

\begin{corollary}
\label{cor:ReductionExcludes}
Let $n \geq 3$ and let $A$ be a weight vector satisfying (i), (ii), and (iii).
Then for some ordering
of the weights $a_2, \ldots, a_n$, we have that for $2 \leq i \leq n-1$, $\gcd(a_1, a_2, \ldots, a_i)$
does not divide $a_\ell$ for any $\ell > i$.
\end{corollary}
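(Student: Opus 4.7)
The plan is to prove the corollary by induction on $n$, iterating Proposition \ref{prop:ReductionStep} to produce the required ordering of the weights one at a time.

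For the base case $n=3$, I would apply Proposition \ref{prop:ReductionStep} once to obtain an index $i \in \{2,3\}$ (necessarily different from $1$, since the relevant codimension-$1$ stratum must meet $Z$) such that $\gcd\{a_j : j \neq i\} > \gcd(A)$. After possibly swapping $a_2$ and $a_3$ so that $i = 3$, the inequality $\gcd(a_1,a_2) > \gcd(a_1,a_2,a_3)$ is equivalent to $\gcd(a_1,a_2) \nmid a_3$, which is the only instance of the claim to check.

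For the inductive step, assuming the corollary for length $n-1 \geq 3$ and starting from $A$ satisfying (i), (ii), (iii), I would apply Proposition \ref{prop:ReductionStep} to $M_0$ to produce a complex codimension-$1$ orbit type whose closure $V_{\{j_0\}^c}^{cl}$ is itself the reduced space of an $\Sp^1$-action on $\C^{n-1}$ satisfying (i), (ii), (iii), where $j_0 \neq 1$ is the unique index omitted from the isotropy-defining subset. Setting $k = \gcd\{a_j : j \neq j_0\}$, the analysis of orbit types of $\Sp^1$ on $\C^n$ given just before Proposition \ref{prop:ReductionStep} yields $k > \gcd(A)$ and $k \nmid a_{j_0}$. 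After relabeling $a_{j_0}$ as $a_n$, applying the induction hypothesis to the reduced weight vector obtained by omitting $a_{j_0}$ produces an ordering of the remaining $n-2$ non-$a_1$ indices as $a_2, \ldots, a_{n-1}$ with $\gcd(a_1,\ldots,a_i) \nmid a_\ell$ for all $2 \leq i \leq n-2$ and $i < \ell \leq n-1$.

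The final step is to verify the additional conditions with $\ell = n$. For each $2 \leq i \leq n-1$, the gcd $\gcd(a_1,\ldots,a_i)$ is a multiple of $k = \gcd(a_1,\ldots,a_{n-1})$, so if $\gcd(a_1,\ldots,a_i)$ divided $a_n$, then $k$ would divide $a_n$, contradicting $k \nmid a_n$. The bulk of the work is carried out by Proposition \ref{prop:ReductionStep}; the only extra ingredient, and the main thing to verify, is this elementary gcd propagation, which ensures that the inductively-obtained ordering interacts correctly with the newly appended weight $a_n$.
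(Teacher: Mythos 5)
Your proof is correct and follows essentially the same route as the paper, which also obtains the ordering by iterating Proposition \ref{prop:ReductionStep} to remove one weight at a time so that the gcd of the remaining weights strictly increases at each stage. The only difference is organizational (a formal induction on $n$ versus a forward chain), and you usefully make explicit the gcd-propagation step --- that $\gcd(a_1,\ldots,a_{n-1})$ divides $\gcd(a_1,\ldots,a_i)$ for $i\leq n-1$, so non-divisibility of the newly appended weight propagates to all earlier partial gcds --- which the paper leaves implicit in its ``by induction'' remark.
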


\begin{example}
\label{ex:ReductionExcludes}
Consider the weight vector $A = (-3, 6, 12, 4)$.  Using the Diophantine condition in Corollary \ref{cor:Gamma0Diophantine}
and the formula for $\gamma_0(A)$ in
\cite[Section 7]{HerbigSeaton}, we have that $\gamma_0(A) = 1/21$ is the reciprocal of an integer,
so that the associated reduced space $M_0$ may be $\Z$-graded regularly symplectomorphic to an orbifold.
However, by inspection, the only complex codimension-$1$ orbit type stratum in $M_0$ corresponds to $z_4 = 0$
with isotropy group $\Z_3$.  The closure of this orbit type is the reduced space associated to the weight vector $(-3, 6, 12)$.
This representation is not effective, and the corresponding effective representation
has weight vector $(-1, 2, 4)$.  However, as $\gcd(1,2) = \gcd(1,4) = \gcd(1,2,4) = 1$, the corresponding reduced
space does not have a complex codimension-$1$ orbit type stratum.  It follows that $M_0$ (equipped with a global chart
associated to a Hilbert basis for $\R[\C^4]^{\Sp^1}$) is not $\Z$-graded regularly symplectomorphic to an orbifold.
\end{example}

\section{The case $n=3$}
\label{sec:nis3}

Let $V = \C^3$ be equipped with the $\Sp^1$-action with weight vector $A = (a_1, a_2, a_3)$
such that each $a_i$ is nonzero and $\gcd(a_1,a_2,a_3) = 1$.
Assume further that the symplectic reduced space $M_0 = J^{-1}(0)/\Sp^1$ with global chart associated to a Hilbert
basis for $\R[\C^3]^{\Sp^1}$ is $\Z$-graded regularly
symplectomorphic to a linear symplectic orbifold $\C^2/\Gamma$ for $\Gamma < \U_2$ finite.
In this section, we will show that no such $A$ and $\Gamma$ exist, which along with
Corollary \ref{cor:ReduceToNis3} derives the contradiction that completes the proof of Theorem \ref{mainthm}.

In Subsection \ref{subsec:HilbM0}, we collect some restrictions on the Hilbert series of the algebra of
regular functions $\R[M_0]$.  In Subsection \ref{subsec:HilbC2Gamma}, we make similar observations about
the Hilbert series of $\R[\C^2]^\Gamma$.  In Subsection \ref{subsec:TypesU2},
we use these results and the classification of finite subgroups of $\U_2$ in \cite{DuVal,CoxeterBook}
to exclude each possible $\Gamma$.

\begin{remark}
\label{rem:PoissonStrucTypeI}
Recall that in this paper, the Poisson structures on the algebras of regular functions are only used to establish
a bijection between orbit type strata of $\Z$-graded regular symplectomorphic reduced spaces; see Section
\ref{sec:generalRed}.  We point out that in this section, this bijection is used again in Lemmas
\ref{lem:TypeI2pseudoreflections} an \ref{lem:TypeI1pseudoreflection} and hence is required to rule out
groups of Type I.
\end{remark}


\subsection{The Hilbert series of $M_0$}
\label{subsec:HilbM0}

Let $A = (a_1, a_2, a_3)$ with each $a_i \neq 0$ as above and recall that $\alpha_j = |a_j|$ and
$\bs{\alpha} = (\alpha_1, \alpha_2, \alpha_3)$.  The weight vector $A$ is
\emph{generic} if $\alpha_i \neq \alpha_j$ for $i \neq j$ and \emph{degenerate} otherwise, see
\cite[Definition 2.2]{HerbigSeaton}.  Our first task is to exclude degenerate weight vectors,
which is a consequence of the Diophantine condition of Corollary \ref{cor:Gamma0Diophantine}
on $\gamma_0(\bs{\alpha})$ as defined by Equation \eqref{eq:LaurentGeneralReduced}.

\begin{lemma}
\label{lem:Reduced3Degenerate}
Let $A = (a_1, a_2, a_3)$ be the weight vector associated to a linear $\Sp^1$-action on $\C^3$
such that each $a_i$ is nonzero and $\gcd(a_1, a_2, a_3) = 1$.  If $A$ is degenerate,
then the associated reduced space $M_0$ is not $\Z$-graded regularly symplectomorphic to a linear
symplectic orbifold $\C^2/\Gamma$ for finite $\Gamma < \U_2$.
\end{lemma}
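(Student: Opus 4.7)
The plan is to apply the Diophantine condition of Corollary~\ref{cor:Gamma0Diophantine}: a $\Z$-graded regular symplectomorphism between $M_0$ and a linear symplectic orbifold forces $1/\gamma_0(A) \in \Z$, so it suffices to exhibit, for every degenerate weight vector, the failure of this integrality. Since the formula \eqref{eq:Gamma0Reduced} for $\gamma_0(A)$ is symmetric in $(\alpha_1, \alpha_2, \alpha_3)$, I may relabel and reduce all degenerate configurations to the single case $\alpha_1 = \alpha_2 =: \alpha$, $\alpha_3 =: \beta$, where $\alpha, \beta \in \Z_{\geq 1}$ (this includes the totally degenerate situation $\alpha = \beta$). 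The coprimality assumption $\gcd(a_1, a_2, a_3) = 1$ forces $\gcd(\alpha, \beta) = 1$.

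Substituting into \eqref{eq:Gamma0Reduced}, the elementary symmetric polynomials become $e_2 = \alpha^2 + 2\alpha\beta = \alpha(\alpha + 2\beta)$ and $e_1 e_2 - e_3 = 2\alpha(\alpha + \beta)^2$, so that
$$
\gamma_0(A) = \frac{\alpha + 2\beta}{2(\alpha + \beta)^2}.
$$
The next step is to verify that numerator and denominator of the reciprocal are essentially coprime: one has $\gcd(\alpha + 2\beta, \alpha + \beta) = \gcd(\beta, \alpha + \beta) = \gcd(\alpha, \beta) = 1$, whence $\gcd(\alpha + 2\beta, (\alpha + \beta)^2) = 1$. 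Consequently, $1/\gamma_0(A) = 2(\alpha + \beta)^2/(\alpha + 2\beta)$ lies in $\Z$ if and only if $(\alpha + 2\beta) \mid 2$.

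Because $\alpha \geq 1$ and $\beta \geq 1$, one has $\alpha + 2\beta \geq 3$, so this divisibility is impossible and $1/\gamma_0(A) \notin \Z$, contradicting Corollary~\ref{cor:Gamma0Diophantine}. There is no serious obstacle in the argument beyond verifying the short algebraic identities above; the symmetry of \eqref{eq:Gamma0Reduced} uniformly handles every choice of which pair of weights coincide, and the coprimality chain reduces the entire integrality question to a single linear divisibility that fails on size grounds.
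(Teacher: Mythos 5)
Your proposal is correct and follows the paper's strategy: invoke the Diophantine condition of Corollary \ref{cor:Gamma0Diophantine} and compute $1/\gamma_0$ explicitly for a degenerate weight vector $\bs{\alpha}=(\alpha,\alpha,\beta)$ with $\gcd(\alpha,\beta)=1$, arriving at the same expression $1/\gamma_0 = 2(\alpha+\beta)^2/(\alpha+2\beta)$. Where you diverge is in the final number-theoretic step, and your version is cleaner: the observation that $\gcd(\alpha+2\beta,\alpha+\beta)=\gcd(\beta,\alpha+\beta)=\gcd(\alpha,\beta)=1$, hence $\gcd(\alpha+2\beta,(\alpha+\beta)^2)=1$, reduces integrality of $1/\gamma_0$ to the single divisibility $(\alpha+2\beta)\mid 2$, which fails since $\alpha+2\beta\geq 3$. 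The paper instead treats the completely degenerate case $(1,1,1)$ separately, then rules out odd prime divisors of $\alpha+2\beta$, concludes $\alpha+2\beta=2^k$, and finishes with a parity case analysis on $\alpha/2$; your full-coprimality observation subsumes all of that, including the case $\alpha=\beta=1$, in one stroke. All the algebraic identities you use check out against Equation \eqref{eq:Gamma0Reduced}, so there is no gap.
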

\begin{proof}
Suppose $A$ is degenerate.  Then either $\alpha_1 = \alpha_2 = \alpha_3$, in which case $\bs{\alpha} = (1,1,1)$,
or up to permuting weights, $\bs{\alpha} = (\alpha, \alpha, \beta)$ for positive coprime integers $\alpha$ and $\beta$.

The case $\bs{\alpha} = (1,1,1)$ is referred to as \emph{completely degenerate} in \cite[Section 5.3]{HerbigSeaton},
where the corresponding $\gamma_0$ is computed to be $3/8$.  As this is not the reciprocal of an integer, we
have by Corollary \ref{cor:Gamma0Diophantine} that the corresponding $M_0$ is not $\Z$-graded regularly
symplectomorphic to a linear symplectic orbifold.

So assume now that $\bs{\alpha} = (\alpha, \alpha, \beta)$ for positive coprime integers $\alpha$ and $\beta$,
and then by Equation \eqref{eq:Gamma0Reduced}, we have
\[
    \frac{1}{\gamma_0(\bs{\alpha})}
        =   \frac{2\alpha(\alpha + \beta)^2}{\alpha^2 + 2\alpha\beta}
        =   \frac{2(\alpha + \beta)^2}{\alpha + 2\beta}.
\]
Assume for contradiction that $1/\gamma_0(\bs{\alpha}) \in \Z$, and suppose $p$ is an odd prime that divides
$\alpha + 2\beta$.  Then $p$ divides $\alpha + \beta$, implying that $p$ divides $\beta$ and hence $\alpha$,
contradicting the fact that $\alpha$ and $\beta$ are coprime.  Therefore, it must be that
$\alpha + 2\beta = 2^k$ for some $k > 0$.  Note that $\alpha$ must be even and hence $\beta$ is odd.

If $\alpha/2$ is even, then $(\alpha + 2\beta)/2 = \alpha/2 + \beta$ is odd, implying $k = 1$ and
$\alpha + 2\beta = 2$.  This is not possible for positive integers $\alpha$ and $\beta$.  So suppose
$\alpha/2$ is odd, and then $\alpha/2 + \beta$ is even and divides $(\alpha + \beta)^2$.  Then $2$
divides $\alpha + \beta$, contradicting the parities of $\alpha$ and $\beta$.  Hence there are no
such $\alpha$ and $\beta$, completing the proof.
\end{proof}

It follows that we may restrict our attention to generic weight vectors.
We now observe the following immediate consequence of Corollary \ref{cor:ReductionExcludes}.

\begin{corollary}
\label{cor:Reduced3NotPairwiseCoprime}
Let $A = (a_1, a_2, a_3)$ be a generic weight vector associated to a linear $\Sp^1$-action on $\C^3$
such that each $a_i$ is nonzero and $\gcd(a_1, a_2, a_3) = 1$.
Assume $a_1 < 0$ and $a_2, a_3 > 0$.  If the associated reduced space
$M_0$ is $\Z$-graded regularly symplectomorphic to a linear
symplectic orbifold $\C^2/\Gamma$ for finite $\Gamma < \U_2$ then
either $\gcd(a_1, a_2) > 1$ or $\gcd(a_1, a_3) > 1$.
\end{corollary}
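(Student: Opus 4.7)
The plan is to derive this as a direct specialization of Corollary \ref{cor:ReductionExcludes} to the case $n=3$, arguing by contrapositive. First I would check that the hypotheses of Corollary \ref{cor:ReductionExcludes} are met: assumption (i) of Section \ref{sec:n3Red} is exactly the supposition that $M_0$ is $\Z$-graded regularly symplectomorphic to $\C^{n-1}/\Gamma$; assumption (ii) is the hypothesis that every $a_i$ is nonzero; and assumption (iii) (that $a_1$ is the only negative weight) holds by the sign pattern $a_1 < 0 < a_2, a_3$. Since permuting $a_2$ and $a_3$ does not affect any hypothesis (nor the $\Z$-graded regular symplectomorphism class of $M_0$), we are free to invoke Corollary \ref{cor:ReductionExcludes} for $n=3$ in its full form.

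Specialized to $n=3$, Corollary \ref{cor:ReductionExcludes} asserts that, possibly after swapping $a_2$ and $a_3$, the integer $\gcd(a_1, a_2)$ fails to divide $a_3$. In particular $\gcd(a_1,a_2)\neq 1$, since $1$ divides every integer. Hence, for this ordering, $\gcd(a_1, a_2) > 1$, which in the original ordering gives $\gcd(a_1, a_2) > 1$ or $\gcd(a_1, a_3) > 1$. Contrapositively, if both $\gcd(a_1, a_2) = 1$ and $\gcd(a_1, a_3) = 1$, then neither ordering satisfies the conclusion of Corollary \ref{cor:ReductionExcludes}, so $M_0$ cannot be $\Z$-graded regularly symplectomorphic to a linear symplectic orbifold $\C^2/\Gamma$.

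There is no real obstacle here: the content of the statement is entirely contained in Corollary \ref{cor:ReductionExcludes}, and the present corollary just repackages the $n=3$ case into a symmetric statement about the pairwise gcds with the negative weight. The only thing to watch is the role of the ordering in Corollary \ref{cor:ReductionExcludes}, which is handled by forming the symmetric disjunction $\gcd(a_1,a_2)>1$ or $\gcd(a_1,a_3)>1$ in the conclusion.
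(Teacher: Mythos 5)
Your proposal is correct and matches the paper's own argument: the paper states this corollary as an immediate consequence of Corollary \ref{cor:ReductionExcludes}, specialized to $n=3$ exactly as you describe (the only nontrivial content being that a gcd which fails to divide something cannot equal $1$, plus the symmetrization over the ordering of $a_2, a_3$).
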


Note that if $a_1$, $a_2$, and $a_3$ are pairwise coprime, 
then the Laurent coefficient $\gamma_0(\bs{\alpha})$
is not the reciprocal of an integer and hence fails the Diophantine condition
\eqref{eq:Gamma0Diophantine}.  To see this, recall from Equation \eqref{eq:Gamma0Reduced} that
\[
    \frac{1}{\gamma_0(A)} = \frac{e_1 e_2 - e_3}{e_2}   =   e_1 - \frac{e_3}{e_2}
\]
where we recall that $e_i$ denotes the elementary symmetric polynomial in the $\alpha_j$ of degree $i$.
Note that the $e_i$ are obviously integer valued when the $\alpha_j \in \Z$.  Hence $1/\gamma_0(A)$ is an integer
if and only if $e_2$ divides $e_3$, i.e.
\[
    \frac{a_1 a_2 + a_1 a_3 + a_2 a_3}{a_1 a_2 a_3} \in \Z.
\]
As a consequence, considering prime divisors of the $a_j$, it is immediate that any prime divisor of a weight $a_j$ must divide
one of the other two weights.

We now establish the following, which will allow us in Subsection \ref{subsec:TypesU2} to exclude any
finite subgroup of $\U_2$ that does have exactly two quadratic invariants.

\begin{lemma}
\label{lem:Reduced3firstTaylor}
Let $A = (a_1, a_2, a_3)$ be a generic weight vector associated to a linear $\Sp^1$-action on $\C^3$
such that each $a_i$ is nonzero and $\gcd(a_1, a_2, a_3) = 1$.  Then the Taylor expansion of
$\Hilb_{A}^{\on}(x)$ at $x = 0$ begins
\[
    \Hilb_{A}^{\on}(x)  =   1 + 2x^2 + \cdots .
\]
\end{lemma}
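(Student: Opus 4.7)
The plan is to compute $\dim_\R(\R[M_0])_k$ directly for $k=0,1,2$, since these are the first three Taylor coefficients of $\Hilb_{A}^{\on}(x)$ at $x=0$. Recall that in the circle case $\R[M_0] = \R[V]^{\Sp^1}/\mathcal{I}_J^{\Sp^1}$, that $\Sp^1$ acts on $z_i$ (resp.\ $\bar z_i$) with weight $a_i$ (resp.\ $-a_i$), and that a monomial in the $z_i, \bar z_i$ is invariant precisely when its total weight vanishes. Since $\R[V]^{\Sp^1} \otimes_\R \C = \C[V]^{\Sp^1}$, the real dimension of the degree-$d$ invariants agrees with the complex dimension of the corresponding subspace of complex invariant polynomials, so I may simply count invariant complex monomials.

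For $k=0$ only the constants contribute, giving $1$. For $k=1$, the six linear monomials $z_i, \bar z_i$ have weights $\pm a_i$, which are all nonzero since each $a_i\neq 0$; hence there are no linear invariants, and the coefficient of $x$ vanishes.

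The substantive step is the coefficient of $x^2$. The degree-two monomials carry circle weights: $z_iz_j$ has weight $a_i+a_j$, $z_i\bar z_j$ has weight $a_i-a_j$, and $\bar z_i\bar z_j$ has weight $-(a_i+a_j)$. Genericity of $A$ gives $|a_i|\neq|a_j|$ for $i\neq j$, and combined with $a_i\neq 0$ for all $i$ this forces each of $a_i\pm a_j$ (for $i\neq j$) and $2a_i$ to be nonzero. Thus the only invariant monomials in degree two are $|z_i|^2 = z_i\bar z_i$ for $i=1,2,3$, giving $\dim_\R(\R[V]^{\Sp^1})_2 = 3$.

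To conclude, since $J$ is $\Sp^1$-invariant, a standard averaging argument shows $\mathcal{I}_J^{\Sp^1} = J\cdot\R[V]^{\Sp^1}$, so $(\mathcal{I}_J^{\Sp^1})_2 = \R\cdot J$, which has real dimension $1$ because $J = \tfrac{1}{2}\sum_i a_i|z_i|^2$ is a nontrivial linear combination of the $|z_i|^2$. Therefore $\dim_\R(\R[M_0])_2 = 3-1 = 2$, yielding the claimed expansion. I do not anticipate any genuine obstacle here; the whole argument is a direct invariant-theoretic computation, and genericity is precisely what rules out the additional degree-two invariants that would appear from coincidences $|a_i|=|a_j|$.
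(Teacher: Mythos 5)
Your proof is correct and follows essentially the same route as the paper: count the invariant monomials in degrees $0$, $1$, $2$ (getting $1$, $0$, $3$, with genericity ruling out all quadratic invariants other than the $z_i\cc{z_i}$) and then pass to the quotient by the ideal generated by $J$. The only cosmetic difference is in the last step, where the paper invokes the identity $(1-x^2)\Hilb_A^{\off}(x)=\Hilb_A^{\on}(x)$ from the earlier work, while you instead observe directly that $(\mathcal{I}_J^{\Sp^1})_2=\R\, J$ is one-dimensional; these amount to the same thing in degree $2$.
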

\begin{proof}
Let $(z_1, z_2, z_3)$ denote the coordinates for $\C^3$ as above.
A polynomial in the $z_i, \cc{z_i}$ is invariant if and only if each monomial term is invariant,
so that the elements of $\R[V]^{\Sp^1}$ of degree $k$ are spanned by the invariant monomials of degree $k$.  Let
\[
    \Hilb_{A}^{\off}(x) =   b_0 + b_1 x + b_2 x^2 + \cdots
\]
denote the off-shell Hilbert series, i.e. the Hilbert series of $\R[\C^3]^{\Sp^1}$,
and then $b_k$ is the number of invariant monomials in $z_i, \cc{z_i}$ of
degree $k$.  Then $b_0 = 1$ counts the constant functions.  As each $a_i \neq 0$, there are clearly no
invariant linear monomials and hence $b_1 = 0$.  Finally, from the generic condition $\alpha_i \neq \alpha_j$ for
$i \neq j$, it is easy to see that the only invariant quadratic monomials are $z_i \cc{z_i}$ for $i = 1,2,3$ so
that $b_2 = 3$.  Then a simple computation using the fact that
\[
    (1 - x^2)\Hilb_{A}^{\off}(x)
    =
    \Hilb_{A}^{\on}(x)
\]
(see \cite[Proposition 2.1]{HerbigSeaton}) completes the proof.
\end{proof}
We note that using similar observations, it is easy to see that in fact,
\[
    \Hilb_{A}^{\on}(x)  =   1 + 2x^2 + c_3 x^3 + c_4 x^4 \cdots
\]
where $c_3$ is even and $c_4 \geq 3$ is odd, though we will only need Lemma \ref{lem:Reduced3firstTaylor} above.

Finally, we establish a restriction on the ratio of the first two nonzero Laurent series coefficients
of a reduced space that is $\Z$-graded regularly symplectomorphic to a linear orbifold.  For many finite subgroups
of $\U_2$ that we will meet in Subsection \ref{subsec:TypesU2}, the ratio of the first two nonzero Laurent coefficients
is $1$, $2$, or $\geq 3$, and hence they will be excluded using the following.

\begin{lemma}
\label{lem:Reduced3RatioG2G0}
Let $A = (a_1, a_2, a_3)$ be a generic weight vector associated to a linear $\Sp^1$-action on $\C^3$
such that each $a_i$ is nonzero and $\gcd(a_1, a_2, a_3) = 1$.  Assume that the associated reduced
space $M_0$ is $\Z$-graded regularly symplectomorphic to a linear symplectic orbifold $\C^2/\Gamma$
for finite $\Gamma < \U_2$.  Then the ratio $\gamma_0(A)/\gamma_2(A) < 3$ and is not equal to $1$ or $2$.
\end{lemma}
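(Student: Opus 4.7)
My plan is to use the explicit formulas \eqref{eq:Gamma0Reduced} and \eqref{eq:Gamma23Reduced}, which combine to give
\[
    \frac{\gamma_0(A)}{\gamma_2(A)}
    =
    \frac{12\, e_2}{(d_{12}^2 + d_{13}^2 + d_{23}^2 - 2)\, e_2 + d_{12}^2 \alpha_3^2 + d_{13}^2 \alpha_2^2 + d_{23}^2 \alpha_1^2},
\]
writing $d_{ij} := \gcd\nolimits_{ij}$. The argument splits into cases on how many of the $d_{ij}$ are $\geq 2$, with key input from Corollary \ref{cor:Reduced3NotPairwiseCoprime} together with the subsequent remark that every prime divisor of a weight must also divide at least one of the other two weights, and the AM--GM estimate $\alpha_1^2 + \alpha_2^2 + \alpha_3^2 \geq e_2$.

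If all three $d_{ij}$ are $\geq 2$, the denominator is at least $10\, e_2 + 4(\alpha_1^2 + \alpha_2^2 + \alpha_3^2) \geq 14\, e_2$, so the ratio is $< 1$ and the lemma is immediate. If exactly two of them are $\geq 2$, the prime-divisor remark forces these two gcds to be coprime (a common prime would appear in all three $\alpha_j$); the pair $(2,2)$ is thus excluded and the smallest admissible pair is $(2,3)$, so the sum of their squares is $\geq 13$, the coefficient of $e_2$ in the denominator is $\geq 12$, the other terms are strictly positive, and the ratio is again strictly less than $1$.

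The remaining case has exactly one gcd $\geq 2$; after relabeling I may take $d_{12} \geq 2$ and $d_{13} = d_{23} = 1$. The prime-divisor remark then forces $\alpha_3 = 1$, and the denominator becomes $d_{12}^2\, e_2 + d_{12}^2 + \alpha_1^2 + \alpha_2^2$. For $d_{12} \geq 4$ the denominator already exceeds $16\, e_2$, giving ratio $< 1$. For $d_{12} = 3$, writing $\alpha_i = 3 a_i$ with $\gcd(a_1, a_2) = 1$, the condition for ratio $= 1$ reduces to $a_1^2 + a_2^2 - 3 a_1 a_2 = a_1 + a_2 - 1$; viewed as a quadratic in $a_1$ its discriminant is $5 (a_2 + 1)^2 - 8$, which is never a square because $2$ is a nonresidue modulo $5$. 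Ratios $2$ and $\geq 3$ for $d_{12} = 3$ are immediate since the denominator is $\geq 9\, e_2$. For $d_{12} = 2$ the estimate $\alpha_1^2 + \alpha_2^2 + 1 \geq e_2$ yields $4\, e_2 + 4 + \alpha_1^2 + \alpha_2^2 \geq 5\, e_2 + 3$, giving $\gamma_0/\gamma_2 < 12/5 < 3$. Writing $\alpha_i = 2 a_i$ with $\gcd(a_1, a_2) = 1$, the equation for ratio $=2$ reduces to $(a_1 - a_2)^2 = a_1 + a_2 - 1$, whose two sides have opposite parities modulo $2$, while the equation for ratio $=1$ reduces to $a_1^2 + a_2^2 - 8 a_1 a_2 = 4(a_1 + a_2) - 1$, which is inconsistent modulo $4$ since $a_1^2 + a_2^2 \in \{0,1,2\} \pmod 4$ but the right side is $\equiv 3 \pmod 4$.

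I expect the main obstacle to be the bookkeeping in the subcases $d_{12} \in \{2, 3\}$: one must first extract the structural fact $\alpha_3 = 1$ from the prime-divisor remark and then carry out the fussy modular analysis that rules out each offending value. By contrast, the inequality $\gamma_0/\gamma_2 < 3$ and the non-delicate exclusions in the other cases flow immediately from single applications of AM--GM combined with the constraint that at least one $d_{ij}^2 \geq 4$.
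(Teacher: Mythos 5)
Your overall strategy is the same as the paper's: combine Equations \eqref{eq:Gamma0Reduced} and \eqref{eq:Gamma23Reduced} into a single expression for $\gamma_0(A)/\gamma_2(A)$ and run a case analysis on the pairwise gcds, finishing the delicate subcases by congruences. Your displayed formula for the ratio is correct, and your treatment of the cases with two or three nontrivial $\gcd\nolimits_{ij}$ is sound (and in fact gives the sharper bound that the ratio is strictly below $1$ there, using the pairwise coprimality of the $\gcd\nolimits_{ij}$).

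The gap is in the case where exactly one $\gcd\nolimits_{ij}$ exceeds $1$. The step ``the prime-divisor remark then forces $\alpha_3=1$'' is not justified: the remark you invoke (that any prime divisor of a weight must divide one of the other two weights) is actually false as a consequence of the Diophantine condition alone. For instance $\bs\alpha=(3,10,15)$ has $1/\gamma_0(\bs\alpha)=26\in\Z$, yet the prime $2$ divides only the weight $10$. What is true is that any prime dividing $e_2$ must divide at least two of the weights (if $p\mid e_2$ and $p\mid\alpha_1$, then $p\mid e_2-\alpha_1\alpha_2-\alpha_1\alpha_3=\alpha_2\alpha_3$). Since everything downstream in this subcase --- the reduced equations $a_1^2+a_2^2-3a_1a_2=a_1+a_2-1$, $(a_1-a_2)^2=a_1+a_2-1$, and so on --- is computed with $\alpha_3=1$ substituted in, the whole subcase is unsupported as written. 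There are two repairs. One is the paper's route: keep $\alpha_3$ general, write $\alpha_1=\gcd\nolimits_{12}\alpha_1'$, $\alpha_2=\gcd\nolimits_{12}\alpha_2'$, and run the parity and mod-$4$ arguments on $\alpha_1',\alpha_2',\alpha_3$ for $\gcd\nolimits_{12}\in\{2,3\}$. The shorter repair is to notice the case is vacuous: if $\gcd\nolimits_{13}=\gcd\nolimits_{23}=1$ then $\gcd(e_2,\alpha_3)=1$, so $e_2\mid e_3=\alpha_1\alpha_2\alpha_3$ forces $e_2\mid\alpha_1\alpha_2$, impossible since $e_2>\alpha_1\alpha_2$; hence the Diophantine condition of Corollary \ref{cor:Gamma0Diophantine} already excludes every weight vector with two unit pairwise gcds, and combined with your other cases this shows $\gamma_0(A)/\gamma_2(A)<1$ outright.
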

\begin{proof}
Recall the notation $\gcd\nolimits_{ij} = \gcd(\alpha_i, \alpha_j)$ from Subsection \ref{subsec:LaurentBack}.
Note that if $\gcd\nolimits_{ij} = \gcd\nolimits_{jk}$ for $i$, $j$, and $k$ distinct,
then this common divisor of all three weights must be $1$.  Let $r = \gamma_0(A)/\gamma_2(A)$.

$\bs{r < 3.}$
Applying Equations \eqref{eq:Gamma0Reduced} and \eqref{eq:Gamma23Reduced}
and solving for $e_2$, one obtains
\begin{equation}
\label{eq:Reduced3Ratio}
    e_2 =
    \frac{\gcd\nolimits_{12}^2 \alpha_3^2 + \gcd\nolimits_{13}^2 \alpha_2^2 + \gcd\nolimits_{23}^2 \alpha_1^2}
        {12/r + 2 - \gcd\nolimits_{12}^2 - \gcd\nolimits_{13}^2 - \gcd\nolimits_{23}^2}.
\end{equation}
As $e_2$ and the numerator of the right side are clearly positive, the denominator of the right side must be positive as well.
If $r \geq 3$, then this is only the case if $\gcd\nolimits_{12} = \gcd\nolimits_{13} = \gcd\nolimits_{23} = 1$.
However, this implies that the weights are pairwise relatively prime, which cannot be the case by
Corollary \ref{cor:Reduced3NotPairwiseCoprime}.  Hence $r < 3$.

$\bs{r \neq 1.}$
Suppose for contradiction that $r = 1$, and as $\gamma_0(A)$ does not depend on the order of the weights,
assume without loss of generality that
$\gcd\nolimits_{12}\geq\gcd\nolimits_{13}\geq\gcd\nolimits_{23}$.  Then for the denominator
of the right side of Equation \eqref{eq:Reduced3Ratio} to be positive, it must be that
$\gcd\nolimits_{13} = \gcd\nolimits_{23} = 1$ and $\gcd\nolimits_{12} \in \{ 1,2,3 \}$.
As above, we can exclude the case $\gcd\nolimits_{12} = 1$ by Corollary \ref{cor:Reduced3NotPairwiseCoprime}.

Suppose $\gcd\nolimits_{12} = 2$, and set $\alpha_1^\prime = \alpha_1/2$, $\alpha_2^\prime = \alpha_2/2$,
and $\alpha_3^\prime = \alpha_3$ so that the $\alpha_j^\prime$ are pairwise coprime.  Note that
$\alpha_3^\prime$ is odd, and at least one of $\alpha_1^\prime$ and $\alpha_2^\prime$ is odd as well.
Assume without loss of generality that $\alpha_2^\prime$ is odd.  Then Equation \eqref{eq:Reduced3Ratio}
can be rewritten as
\[
    4(2\alpha_1^\prime \alpha_2^\prime + \alpha_1^\prime \alpha_3^\prime + \alpha_2^\prime \alpha_3^\prime)
    =
    (\alpha_1^\prime)^2 + (\alpha_2^\prime)^2 + (\alpha_3^\prime)^2.
\]
If $\alpha_1^\prime$ is odd, then the right side is odd while the left side is divisible by $4$, a contradiction.
If $\alpha_1^\prime$ is even, then reducing mod $4$ yields
\[
    0   \equiv  (\alpha_2^\prime)^2 + (\alpha_3^\prime)^2\mod 4.
\]
By inspection, there are no odd integers that satisfy this congruence, allowing us to conclude that
$\gcd\nolimits_{12} \neq 2$.

So suppose $\gcd\nolimits_{12} = 3$, and then Equation \eqref{eq:Reduced3Ratio} becomes
\[
    3(\alpha_1 \alpha_2 + \alpha_1 \alpha_3 + \alpha_2 \alpha_3)
    =
    9\alpha_1^2 + \alpha_2^2 + \alpha_3^2.
\]
Considering the corresponding congruence mod $4$ and checking each of the $64$ possible values of the $\alpha_j$,
each of the $8$ solutions has $\alpha_1$, $\alpha_2$, and $\alpha_3$ even.  Hence there are no solutions
with $\gcd(\alpha_1, \alpha_2, \alpha_3) = 1$, and we conclude by contradiction that $r \neq 1$.

$\bs{r \neq 2.}$
Now suppose for contradiction that $r = 2$.  Assuming again with no loss of generality that
$\gcd\nolimits_{12}\geq\gcd\nolimits_{13}\geq\gcd\nolimits_{23}$, the denominator
of the right side of Equation \eqref{eq:Reduced3Ratio} is positive only when
$\gcd\nolimits_{13} = \gcd\nolimits_{23} = 1$ and $\gcd\nolimits_{12} \in \{ 1,2 \}$.
We again exclude $\gcd\nolimits_{12} = 1$ by Corollary \ref{cor:Reduced3NotPairwiseCoprime},
so that $\gcd\nolimits_{12} = 2$.

As above, set $\alpha_1^\prime = \alpha_1/2$, $\alpha_2^\prime = \alpha_2/2$,
and $\alpha_3^\prime = \alpha_3$.  Then the $\alpha_j^\prime$ are pairwise coprime,
$\alpha_3^\prime$ is odd, and we may assume without loss of generality that $\alpha_2^\prime$ is odd as well.
Rewrite Equation \eqref{eq:Reduced3Ratio} as
\[
    2\alpha_1^\prime \alpha_2^\prime + \alpha_1^\prime \alpha_3^\prime + \alpha_2^\prime \alpha_3^\prime
    =
    (\alpha_1^\prime)^2 + (\alpha_2^\prime)^2 + (\alpha_3^\prime)^2,
\]
and then for either possible parity of $\alpha_1^\prime$, the parities of the left and right sides of this equation
do not match.  Hence $r \neq 2$, completing the proof.
\end{proof}


\subsection{The Hilbert series of $\C^2/\Gamma$}
\label{subsec:HilbC2Gamma}

Let $\Gamma < \U_2$ be finite, and let $(w_1, w_2)$ denote the coordinates for $\C^2$.
We let $\omega_m$ denote a choice of primitive $m$th root of unity.  In addition, we let
$\Omega_m^{\operatorname{S}}$ denote the cyclic subgroup of $\SU_2$ of order $m$ generated by the element
$\diag(\omega_m, \omega_m^{-1})$ and $\Omega_m$ denote the cyclic subgroup of $\U_1 < \U_2$ of order $m$
generated by the scalar $\omega_m$.  Abusing notation in this manner, it will be convenient for us to identify
a scalar $\lambda$ with the element $\diag(\lambda,\lambda) \in \U_2$ that acts on $\C^2$ by scalar multiplication.

By the ADE-classification of finite subgroups of $\SU_2$, every cyclic subgroup of $\SU_2$ of order $m$
is conjugate to $\Omega_m^{\operatorname{S}}$.
The generator $\diag(\omega_m, \omega_m^{-1})$ of $\Omega_m^{\operatorname{S}}$ acts on $\C^2 \times \cc{\C^2}$
in coordinates $(w_1, w_2, \cc{w_1}, \cc{w_2})$ as $\diag(\omega_m, \omega_m^{-1}, \omega_m^{-1}, \omega_m)$.
By an application of Molien's formula \cite{Molien,SturmfelsBook}, one has
that all ten quadratic monomials are $\Omega_2^{\operatorname{S}}$-invariant, while if $m \geq 3$,
then the space of quadratic polynomials has dimension $4$; see \cite[Section 5.2.1]{FarHerSea} for details.
By inspection, the quadratic monomials
\begin{equation}
\label{eq:Finite2CyclicQuadraticInvars}
    w_1 \cc{w_1},
    \quad
    w_2\cc{w_2},
    \quad
    w_1 w_2,
    \quad\mbox{and}\quad
    \cc{w_1} \cc{w_2}
\end{equation}
are $\Omega_m^{\operatorname{S}}$-invariant and hence span the quadratic invariants when $m \geq 3$.

\begin{lemma}
\label{lem:Finite2QuadraticCyclics}
Let $\Gamma < \U_2$ be generated by $\Omega_m^{\operatorname{S}}$ and $\Omega_r$ for $m, r \geq 3$.
Then the quadratic invariants of $\Gamma$ are spanned by $w_1 \cc{w_1}$ and $w_2\cc{w_2}$.
\end{lemma}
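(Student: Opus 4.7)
The plan is to compute $\R[\C^2]_2^\Gamma$ as the intersection $\R[\C^2]_2^{\Omega_m^{\operatorname{S}}}\cap\R[\C^2]_2^{\Omega_r}$, which is legitimate because $\Gamma$ is generated by the two subgroups and invariance of a polynomial under a generating set implies invariance under the whole group.

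First I would invoke the remark made just before the statement: for $m\geq 3$, the space of quadratic $\Omega_m^{\operatorname{S}}$-invariants is four-dimensional and is spanned by the four monomials listed in Equation \eqref{eq:Finite2CyclicQuadraticInvars}, namely $w_1\cc{w_1}$, $w_2\cc{w_2}$, $w_1 w_2$, and $\cc{w_1}\cc{w_2}$. It therefore suffices to determine which linear combinations of these four monomials are fixed by $\Omega_r$. Since $\Omega_r$ is generated by the scalar $\omega_r = \diag(\omega_r,\omega_r)\in\U_2$, its action on $\C^2\times\cc{\C^2}$ in the coordinates $(w_1,w_2,\cc{w_1},\cc{w_2})$ is the diagonal matrix $\diag(\omega_r,\omega_r,\omega_r^{-1},\omega_r^{-1})$. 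A one-line computation shows that $\omega_r$ fixes $w_1\cc{w_1}$ and $w_2\cc{w_2}$, scales $w_1 w_2$ by $\omega_r^2$, and scales $\cc{w_1}\cc{w_2}$ by $\omega_r^{-2}$.

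Finally, since $r\geq 3$, the root of unity $\omega_r^2$ is not equal to $1$, so neither $w_1 w_2$ nor $\cc{w_1}\cc{w_2}$ belongs to $\R[\C^2]_2^{\Omega_r}$. Hence the intersection collapses to the two-dimensional span of $w_1\cc{w_1}$ and $w_2\cc{w_2}$, as claimed. There is essentially no obstacle: the hypothesis $m\geq 3$ is needed precisely to reduce to the four-monomial description of the $\Omega_m^{\operatorname{S}}$-invariants, and the hypothesis $r\geq 3$ is needed precisely to ensure $\omega_r^2\neq 1$ so that the two cross-term monomials are killed.
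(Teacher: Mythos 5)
Your proof is correct and follows essentially the same route as the paper: both restrict to the four-dimensional space of quadratic $\Omega_m^{\operatorname{S}}$-invariants spanned by the monomials in Equation \eqref{eq:Finite2CyclicQuadraticInvars} and then analyze the scalar action of $\Omega_r$ on it. The only (cosmetic) difference is that the paper computes the dimension of the $\Omega_r$-fixed subspace via the trace/averaging formula and then identifies the span by inspection, whereas you read off the eigenvalues $1,1,\omega_r^2,\omega_r^{-2}$ directly and note that $\omega_r^2\neq 1$ for $r\geq 3$ -- a slightly more direct finish to the same argument.
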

\begin{proof}
With respect to the basis in Equation \eqref{eq:Finite2CyclicQuadraticInvars} for the quadratic
$\Omega_m^{\operatorname{S}}$-invariants, the action of $\omega_m$ is given by
$\diag(1, 1, \omega_m^2, \omega_m^{-2})$.  Applying the trace formula \cite[Lemma 2.2.2]{SturmfelsBook},
the dimension of the $\omega_m$-invariant subspace is given by
\[
    \frac{1}{m}\sum\limits_{k=0}^{m-1} \operatorname{Trace} \diag(1, 1, \omega_m^2, \omega_m^{-2})
    =
    2 + \frac{1}{m} \sum\limits_{k=0}^{m-1} \omega_m^{2k} + \omega_m^{-2k},
\]
where we note that if $m$ is even, then each term appears twice, but this is corrected for by the $1/m$
prefactor.  The remaining sum is over the $m$th (if $m$ is odd) or $m/2$nd (if $m$ is even) roots of unity
so that as $m \geq 3$, the sum vanishes.  It follows that the $\Gamma$-invariant quadratic polynomials have
dimension $2$, and hence by inspection are spanned by $w_1 \cc{w_1}$ and $w_2\cc{w_2}$.
\end{proof}

Let $\mathbb{D}_m$ denote the binary dihedral group, the subgroup of $\SU_2$ of order $4m$ generated by
$\diag(\omega_{2m}, \omega_{2m}^{-1})$ and
\begin{equation}
\label{eq:Finite2Defb}
    b = \begin{bmatrix}
            0   &   1   \\
            -1  &   0
        \end{bmatrix}.
\end{equation}
Note that $b$ acts on $\C^2 \times \cc{\C^2}$ as $\diag(b,b)$ (in $2\times 2$ blocks).
Again by the ADE-classification, any subgroup of $\SU_2$ isomorphic to a dihedral group
is conjugate to some $\mathbb{D}_m$.  As above, by Molien's formula and computed in \cite[Section 5.2.2]{FarHerSea},
the quadratic invariants of $\mathbb{D}_1 = \langle b \rangle$ are of dimension $4$, and the quadratic invariants of
$\mathbb{D}_m$ for $m > 1$ are of dimension $1$.  One checks that
\[
    w_1 \cc{w_1} + w_2\cc{w_2},
    \quad
    w_1 \cc{w_2} - w_2\cc{w_1},
    \quad
    w_1^2 +  w_2^2,
    \quad\mbox{and}\quad
    \cc{w_1}^2 +  \cc{w_2}^2
\]
are $b$-invariant and hence span the quadratic $\mathbb{D}_1$-invariants, while
$w_1 \cc{w_1} + w_2\cc{w_2}$ spans the $\mathbb{D}_m$ invariants for $m > 1$.

Note that if $\mathbb{D}_m < \Gamma$, then any $\Gamma$-invariant polynomial is obviously $\mathbb{D}_m$-invariant.
Combining these observations yields the following, which along with Lemma \ref{lem:Reduced3firstTaylor} will allow
us to exclude any finite subgroup of $\U_2$ containing $\mathbb{D}_m$ for $m \geq 2$.

\begin{lemma}
\label{lem:Finite2QuadraticDihedral}
Suppose the finite group $\Gamma < \U_2$ contains the binary dihedral group $\mathbb{D}_m$ for $m > 1$.
Then the dimension of the quadratic elements of $\R[\C^2]^\Gamma$ is at most $1$.
\end{lemma}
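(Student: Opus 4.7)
The plan is to exploit the inclusion $\mathbb{D}_m \subseteq \Gamma$ directly at the level of invariant subspaces. Any polynomial invariant under $\Gamma$ is \emph{a fortiori} invariant under the subgroup $\mathbb{D}_m$, so passage to the homogeneous degree-$2$ parts yields an inclusion of real vector spaces
\[
    \R[\C^2]^\Gamma_2 \;\subseteq\; \R[\C^2]^{\mathbb{D}_m}_2.
\]
The conclusion will then follow from a dimension bound on the right-hand side.

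The second step is to invoke the explicit description of $\R[\C^2]^{\mathbb{D}_m}_2$ recalled immediately before the lemma: for $m > 1$, the quadratic $\mathbb{D}_m$-invariants form a one-dimensional space spanned by the (manifestly real) polynomial $w_1 \cc{w_1} + w_2 \cc{w_2}$. This dimension count is obtained via Molien's formula as in \cite[Section 5.2.2]{FarHerSea}, and having $m > 1$ is what kills the three additional invariants $w_1 \cc{w_2} - w_2 \cc{w_1}$, $w_1^2 + w_2^2$, and $\cc{w_1}^2 + \cc{w_2}^2$ present in the $\mathbb{D}_1$ case. Combining with the inclusion of the previous paragraph gives $\dim_\R \R[\C^2]^\Gamma_2 \leq 1$.

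There is essentially no obstacle here: once one recognizes that the only input needed is the preceding computation of $\dim_\R \R[\C^2]^{\mathbb{D}_m}_2 = 1$ for $m > 1$, the lemma reduces to the elementary observation that invariants under a larger group form a subspace of invariants under any of its subgroups. The only point worth noting is that because the spanning element $w_1 \cc{w_1} + w_2 \cc{w_2}$ is fixed by complex conjugation, the dimension computation is the same whether we work over $\R$ in $\R[\C^2]$ or over $\C$ in $\C[w_1, w_2, \cc{w_1}, \cc{w_2}]$, so no care is required in switching between the two pictures used throughout Subsection \ref{subsec:HilbC2Gamma}.
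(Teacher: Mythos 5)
Your proposal is correct and is essentially identical to the paper's argument: the paper likewise observes that any $\Gamma$-invariant is $\mathbb{D}_m$-invariant and then cites the Molien computation from \cite[Section 5.2.2]{FarHerSea} showing that the quadratic $\mathbb{D}_m$-invariants for $m>1$ are spanned by $w_1\cc{w_1}+w_2\cc{w_2}$. Nothing is missing.
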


It will also be helpful for us to note the following computation, which will be used extensively.

\begin{lemma}
\label{lem:Finite2lambdaBEigenvals}
For any $\lambda\in\C$, $\lambda b^k$ is a pseudoreflection if and only if $k$ is odd and $\lambda = \pm \sqrt{-1}$,
in which case it is a pseudoreflection of order $2$.
\end{lemma}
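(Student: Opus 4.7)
The proof is a direct calculation using the explicit form of $b$. First I would record that $b^2 = -I$ and hence $b^4 = I$, so that $b^k$ depends only on $k \bmod 4$ and takes values in $\{I, b, -I, -b\}$. In particular, if $k$ is even then $\lambda b^k = \pm\lambda I$ is a scalar operator on $\C^2$.

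Next I would dispose of the even case. A pseudoreflection on $\C^2$ must fix a complex hyperplane pointwise, which for a $2$-dimensional space means it must have $1$ as an eigenvalue of geometric multiplicity $1$. A nonzero scalar matrix $\mu I$ has $\mu$ as its only eigenvalue with multiplicity $2$, so it is a pseudoreflection only if $\mu = 1$ and the matrix is the identity, which is excluded. Hence $\lambda b^k$ is not a pseudoreflection when $k$ is even.

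For the odd case, $\lambda b^k = \mu b$ with $\mu = \pm\lambda$. The matrix
\[
    \mu b = \begin{bmatrix} 0 & \mu \\ -\mu & 0 \end{bmatrix}
\]
has trace $0$ and determinant $\mu^2$, so its characteristic polynomial is $t^2 + \mu^2$ and its eigenvalues are $\pm\sqrt{-1}\,\mu$. For $\mu b$ to be a pseudoreflection one of these eigenvalues must equal $1$, which forces $\mu = \pm\sqrt{-1}$; the other eigenvalue is then $-1$, so $\mu b$ has order $2$ and is indeed a pseudoreflection. Translating back via $\mu = \pm\lambda$, this happens exactly when $\lambda = \pm\sqrt{-1}$.

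The argument is elementary, so there is no real obstacle — the only thing to be careful about is the sign bookkeeping between $\lambda$, $\mu$, and $b^k$ for $k \equiv 1, 3 \pmod{4}$, but since the set $\{\pm\sqrt{-1}\}$ is closed under negation, the condition on $\lambda$ comes out the same in both subcases.
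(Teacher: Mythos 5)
Your proof is correct and follows essentially the same route as the paper: reduce via $b^2=-I$ to the cases $k$ even (scalar, never a pseudoreflection) and $k$ odd (eigenvalues $\pm\sqrt{-1}\,\lambda$, forcing $\lambda=\pm\sqrt{-1}$ with eigenvalues $\{1,-1\}$ and order $2$). The only difference is that you spell out the sign bookkeeping for $k\equiv 1,3 \pmod 4$ slightly more explicitly than the paper does.
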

\begin{proof}
First note that $\lambda b^2 = -\lambda$ and $\lambda b^4 = \lambda$ each have two eigenvalues of multiplicity $2$ and hence
are never pseudoreflections.  A simple computation demonstrates that the eigenvalues of $\lambda b$ and $\lambda b^3 = - \lambda b$ are
$\sqrt{-1} \lambda$ and $-\sqrt{-1}\lambda$ from which the result follows.
\end{proof}


\subsection{Elimination of each $\Gamma < \U_2$}
\label{subsec:TypesU2}

First, let us briefly recall the classification of finite subgroups of $\U_2$ of \cite{DuVal,CoxeterBook}.
Using the surjective $2$-to-$1$ group homomorphism $\U_1 \times \SU_2 \to \U_2$ given by multiplication
$(l, r) \mapsto lr$ with kernel $\pm(1, \id)$, every element of $\U_2$ can be expressed as the image of
$(l, r) \in \U_1\times\SU_2$, and this expression is unique up to $(-l, -r) = (l,r)$.  Hence, any finite
subgroup of $\U_2$ is of the form
\[
    (L/L_K; R/R_K)_\phi
    =
    \{ (l, r) \in L \times R : \phi(lL_K) = rR_K \}
\]
where $L_K \unlhd L$ are finite subgroups of $\U_1$ and $R_K \unlhd R$
are finite subgroups of $\SU_2$ such that $L/L_K$ is isomorphic to $R/R_K$, and
$\phi\co L/L_K\to R/R_K$ is a choice of isomorphism.  If $\phi$ is unique, we write
simply $(L/L_K; R/R_K)$.  The group $(L/L_K; R/R_K)_\phi$ has order $|R| |L_K|/2$.
Note that $(L/L_K; R/R_K)_\phi$ contains $L_K$ and $R_K$ as subgroups.

Let $\mathbb{T}_{24}$, $\mathbb{O}_{48}$, and $\mathbb{I}_{120}$ denote
the binary tetrahedral, octahedral, and icosahedral groups, respectively.
As above, we let $\Omega_m < \U_1$ and $\Omega_m^{\operatorname{S}} < \SU_2$ denote the cyclic
subgroups of order $m$ and let $\mathbb{D}_m < \SU_2$ denote the binary dihedral group of order $4m$.
Below, we recall each of the nine types of finite subgroups of $\U_2$ and demonstrate that the associated
linear symplectic orbifolds are not $\Z$-graded regularly symplectomorphic to
an $\Sp^1$-reduced space corresponding to weight vector $A = (-\alpha_1, \alpha_2, \alpha_3)$
with each $\alpha_i > 0$.


\subsubsection*{Type I}
\label{subsubsec:TypeI}

A Type I group $\Gamma$ is of the form $(\Omega_{2m}/\Omega_f; \Omega_{2n}^{\operatorname{S}}/\Omega_g^{\operatorname{S}})_d$
where $f\equiv g \mod 2$, and $d$ is relatively prime to $2m/f = 2n/g$ and indicates the isomorphism
$\Omega_{2m}/\Omega_f \to \Omega_{2n}^{\operatorname{S}} /\Omega_g^{\operatorname{S}}$
sending the class of 1 to the class of $d$.  Every element of $\Gamma$ is of the form
$\omega_{2m}^j\diag(\omega_{2n}^k,\omega_{2n}^{-k})$ for some $j$ and $k$ so that $\Gamma$
is abelian and consists of diagonal elements.  Clearly, the only pseudoreflections $\Gamma$ can contain
are of the form $\diag(1,\lambda)$ or $\diag(\lambda,1)$ for a scalar $\omega$ so that the number of
primitive pseudoreflections in $\Gamma$ is either $0$, $1$, or $2$ (see Subsection \ref{subsec:LaurentBack}).
By Corollary \ref{cor:Gamma2Positive}, $\Gamma$ must contain at least one pseudoreflection.  We can exclude
the other Type I groups with the following two results.

\begin{lemma}
\label{lem:TypeI2pseudoreflections}
Let $A = (a_1, a_2, a_3)$ be a generic weight vector associated to a linear $\Sp^1$-action on $\C^3$
such that each $a_i$ is nonzero and $\gcd(a_1, a_2, a_3) = 1$.  Then the associated reduced space $M_0$
cannot be $\Z$-graded regularly symplectomorphic to $\C^2/\Gamma$ where $\Gamma < \U_2$ is abelian and contains
two primitive pseudoreflections.
\end{lemma}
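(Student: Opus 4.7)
The plan is to argue by contradiction, combining the orbit-type-stratum bijection with a short Diophantine computation. By Lemma~\ref{lem:Reduced3Degenerate} we may assume $A$ is generic; since $\Gamma$ is abelian it simultaneously diagonalizes, so after a choice of basis the two primitive pseudoreflections take the form $g_1 = \diag(1, \lambda_1)$ and $g_2 = \diag(\lambda_2, 1)$ of orders $m_1, m_2 \geq 2$. A direct inspection shows that $\C^2/\Gamma$ has exactly two complex codimension-one orbit type strata, with closures $\{w_i = 0\}/\Gamma \cong \C/\Z_{k_i}$, where $k_i = |\Gamma|/m_i$ equals the order of the image of $\Gamma$ under the $i$-th coordinate projection.

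The Poisson-induced bijection of orbit-type strata from Section~\ref{sec:generalRed} then forces $M_0$ to have exactly two complex codimension-one strata. By the analysis in Section~\ref{sec:n3Red} together with sign assumption (iii), these correspond to $I = \{1, j\}$ for $j = 2, 3$, each requiring $d_{1j} := \gcd(\alpha_1, \alpha_j) > 1$. The hypothesis $\gcd(\alpha_1, \alpha_2, \alpha_3) = 1$ further forces $\gcd(d_{12}, d_{13}) = 1$ and $d_{12} d_{13} \mid \alpha_1$, allowing the parameterization $\alpha_1 = d_{12} d_{13}\alpha$, $\alpha_2 = d_{12}\beta$, $\alpha_3 = d_{13}\gamma$ for positive integers $\alpha, \beta, \gamma$ with $\gcd(d_{12}, \gamma) = \gcd(d_{13}, \beta) = 1$. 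By Theorems~\ref{thrm:RestrictStratumGeneral} and~\ref{thrm:RestrictStratumFinite}, $\chi$ restricts on each codimension-one stratum closure to a $\Z$-graded regular symplectomorphism; the $M_0$-side closure corresponding to $I = \{1, j\}$ is the $\Sp^1$-reduction of $\C^2$ with weights $(-\alpha_1, \alpha_j)$, and a direct computation of its invariants on the shell (equivalently, the classification of two-dimensional torus reduced spaces from \cite{FarHerSea}) shows it is $\Z$-graded regularly symplectomorphic to $\C/\Z_{N_{1j}}$ with $N_{1j} = (\alpha_1 + \alpha_j)/d_{1j}$. Matching Hilbert series of cyclic quotients of $\C$ then yields $\{k_1, k_2\} = \{N_{12}, N_{13}\}$.

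For the Diophantine contradiction, combining Corollary~\ref{cor:Gamma0Diophantine} with the factorization $e_1 e_2 - e_3 = (\alpha_1+\alpha_2)(\alpha_1+\alpha_3)(\alpha_2+\alpha_3)$ yields $|\Gamma| = (\alpha_1+\alpha_2)(\alpha_1+\alpha_3)(\alpha_2+\alpha_3)/e_2$. Setting $E := d_{12}\alpha\beta + d_{13}\alpha\gamma + \beta\gamma$, we find $e_2 = d_{12} d_{13} E$, and a short simplification yields
\[
\frac{|\Gamma|}{N_{12}} = d_{12} + \frac{d_{13}\gamma^2}{E}, \qquad \frac{|\Gamma|}{N_{13}} = d_{13} + \frac{d_{12}\beta^2}{E}.
\]
Since both must be positive integers (namely the orders $m_j = |\Gamma|/k_j$), we obtain $E \mid d_{13}\gamma^2$ and $E \mid d_{12}\beta^2$, so $E \mid \gcd(d_{13}\gamma^2, d_{12}\beta^2)$. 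A short valuation-theoretic computation using the coprimalities $\gcd(d_{12}, d_{13}) = \gcd(d_{12}, \gamma) = \gcd(d_{13}, \beta) = 1$ gives $\gcd(d_{13}\gamma^2, d_{12}\beta^2) = \gcd(\beta, \gamma)^2$, whence $E \leq \gcd(\beta, \gamma)^2 \leq \beta\gamma$. However, the explicit formula $E = \alpha(d_{12}\beta + d_{13}\gamma) + \beta\gamma > \beta\gamma$ contradicts this.

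The main obstacle is the identification of each codim-one stratum closure with a cyclic quotient of $\C$, particularly on the $M_0$ side where one must carefully apply the stratum-restriction theorems and invoke the two-dimensional torus classification of \cite{FarHerSea}. Once both closures are expressed as cyclic quotients, the Diophantine finale is short and elementary.
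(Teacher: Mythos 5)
Your proposal is correct and follows essentially the same route as the paper: restrict the symplectomorphism to the two complex codimension-one stratum closures, identify them as cyclic quotients $\C/\Z_{N_{1j}}$ on one side and $\C/\Z_{|\Gamma|/m_i}$ on the other, and derive the two integrality conditions $d_{1j} + (\text{positive fraction}) \in \Z$ that force a divisibility-versus-size contradiction. Your parameterization $\alpha_1 = d_{12}d_{13}\alpha$, $\alpha_2 = d_{12}\beta$, $\alpha_3 = d_{13}\gamma$ and the bound $E \mid \gcd(\beta,\gamma)^2 \le \beta\gamma < E$ is just a renormalized version of the paper's argument with $\kappa = E/\gcd(\alpha_2,\alpha_3)$, so the content is the same.
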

\begin{proof}
Assume such a $\Z$-graded regular symplectomorphism does exist for weight vector $A = (-\alpha_1, \alpha_2, \alpha_3)$
and $\Gamma < \U_2$, and fix a set of two primitive pseudoreflections of $\Gamma$. Then $\C^2$ has two complex codimension-$1$ orbit
type strata implying that $M_0$ does as well. It follows from the descriptions of the orbit type strata of $M_0$
in Section \ref{sec:n3Red} that $\gcd\nolimits_{12} > 1$ and $\gcd\nolimits_{13} > 1$, and the closures of the corresponding
orbit type strata are given by the reduced spaces associated to $(-\alpha_1, \alpha_2)$ and $(-\alpha_1, \alpha_3)$, respectively.
By \cite[Theorem 7]{FarHerSea}, these two orbit type strata are $\Z$-graded regularly symplectomorphic to $\C/\Z_{N_1}$ and $\C/\Z_{N_2}$,
respectively, where $N_1 = (\alpha_1 + \alpha_2)/\gcd\nolimits_{12}$ and $N_2 = (\alpha_1 + \alpha_3)/\gcd\nolimits_{13}$.
By Proposition \ref{prop:ReductionStep}, the $\Z$-graded regular symplectomorphism from $M_0$ to $\C^2/\Gamma$
restricts to a $\Z$-graded regular symplectomorphism on each of the closures of the orbit type strata.
It follows that the closures of the $2$-dimensional orbit type strata of $\C^2/\Gamma$ are isomorphic to
$\C/\Z_{N_1}$ and $\C/\Z_{N_2}$, respectively.

Note that if $h \in \Gamma$ is a primitive pseudoreflection, then the closure of the orbit type
$\langle h \rangle$ is given by $(\C^2)^h = \C$ equipped with the restricted action of $\Gamma$.  As $\Gamma$ is abelian,
the effectivized action is that of $\Gamma/\langle h \rangle$, which acts on $\C$ as an element of $\U_1$ and hence is cyclic.  The
$\Z$-graded invariants of a cyclic group acting on $\C$  determine the order of the group (see \cite[page 21]{FarHerSea}),
so we have that $\Gamma/\langle h \rangle$ is isomorphic to either
$\C/\Z_{N_1}$ or $\C/\Z_{N_2}$.  If $r$ and $s$ denote the orders of the two primitive pseudoreflections in $\Gamma$,
it then follows that up to relabeling, $|\Gamma|/r = N_1$ and $|\Gamma|/s = N_2$.

Using Equations \eqref{eq:Gamma0Reduced} and \eqref{eq:Gamma0123Finite} and the fact that $\gamma_0(A) = \gamma_0(\Gamma)$,
we have
\[
    |\Gamma| = \frac{1}{\gamma_0(A)}
    =
    \frac{(\alpha_1 + \alpha_2)(\alpha_1 + \alpha_3)(\alpha_2 + \alpha_3)}
    {\alpha_1 \alpha_2 + \alpha_1 \alpha_3 + \alpha_2 \alpha_3},
\]
and the equation $N_1 = |\Gamma|/r = (\alpha_1 + \alpha_2)/\gcd\nolimits_{12}$ becomes
\[
    \frac{\alpha_1 + \alpha_2}{\gcd\nolimits_{12}}
    =
    \frac{(\alpha_1 + \alpha_2)(\alpha_1 + \alpha_3)(\alpha_2 + \alpha_3)}
    {r(\alpha_1 \alpha_2 + \alpha_1 \alpha_3 + \alpha_2 \alpha_3)}.
\]
Solving for $r$ yields
\begin{equation}
\label{eq:TypeI2pseudoreflectionsR}
    r   =       \frac{(\alpha_1 + \alpha_3)(\alpha_2 + \alpha_3)\gcd\nolimits_{12}}
                {\alpha_1 \alpha_2 + \alpha_1 \alpha_3 + \alpha_2 \alpha_3}
        =       \gcd\nolimits_{12} + \frac{\alpha_3^2\gcd\nolimits_{12}}
                {\alpha_1 \alpha_2 + \alpha_1 \alpha_3 + \alpha_2 \alpha_3}.
\end{equation}
Similarly from $N_2 = |\Gamma|/s = (\alpha_1 + \alpha_3)/\gcd\nolimits_{13}$, we have
\begin{equation}
\label{eq:TypeI2pseudoreflectionsS}
    s   =       \frac{(\alpha_1 + \alpha_2)(\alpha_2 + \alpha_3)\gcd\nolimits_{13}}
                {\alpha_1 \alpha_2 + \alpha_1 \alpha_3 + \alpha_2 \alpha_3}
        =       \gcd\nolimits_{13} + \frac{\alpha_2^2 \gcd\nolimits_{13}}
                {\alpha_1 \alpha_2 + \alpha_1 \alpha_3 + \alpha_2 \alpha_3}.
\end{equation}
To derive the contradiction, we will show that it is not possible for the expressions in
Equations \eqref{eq:TypeI2pseudoreflectionsR} and \eqref{eq:TypeI2pseudoreflectionsS} to both be integers
when $\gcd(\alpha_1,\alpha_2,\alpha_3) = 1$.

Define
\[
    \alpha_1^\prime
        =  \frac{\alpha_1}{\gcd\nolimits_{12}\gcd\nolimits_{13}},
    \quad
    \alpha_2^\prime
        =  \frac{\alpha_2}{\gcd\nolimits_{12}\gcd\nolimits_{23}},
    \quad\mbox{and}\quad
    \alpha_3^\prime
        =  \frac{\alpha_3}{\gcd\nolimits_{13}\gcd\nolimits_{23}}.
\]
As $\gcd(\alpha_1,\alpha_2,\alpha_3) = 1$, it follows that $\gcd\nolimits_{12}$ and $\gcd\nolimits_{13}$ are
coprime so that $\alpha_1^\prime$ is an integer (and similarly for $\alpha_2^\prime$ and $\alpha_3^\prime$).  Moreover, it is clear that
$\alpha_1^\prime$, $\alpha_2^\prime$, and $\alpha_3^\prime$ are pairwise coprime.  Rewriting the last term in
Equation \eqref{eq:TypeI2pseudoreflectionsR} in terms of these values yields
\begin{equation}
\label{eq:TypeI2pseudoreflectionsR2}
    \frac{\alpha_3^2\gcd\nolimits_{12}}
        {\alpha_1 \alpha_2 + \alpha_1 \alpha_3 + \alpha_2 \alpha_3}
    =
    \frac{(\alpha_3^\prime)^2\gcd\nolimits_{13}\gcd\nolimits_{23}}
        {\alpha_1^\prime \alpha_2^\prime \gcd\nolimits_{12} + \alpha_1^\prime \alpha_3^\prime \gcd\nolimits_{13}
        + \alpha_2^\prime \alpha_3^\prime \gcd\nolimits_{13}},
\end{equation}
and the last term in Equation \eqref{eq:TypeI2pseudoreflectionsS} becomes
\begin{equation}
\label{eq:TypeI2pseudoreflectionsS2}
    \frac{\alpha_2^2 \gcd\nolimits_{13}}{\alpha_1 \alpha_2 + \alpha_1 \alpha_3 + \alpha_2 \alpha_3}
    =
    \frac{(\alpha_2^\prime)^2\gcd\nolimits_{12}\gcd\nolimits_{23}}
        {\alpha_1^\prime \alpha_2^\prime \gcd\nolimits_{12} + \alpha_1^\prime \alpha_3^\prime \gcd\nolimits_{13}
        + \alpha_2^\prime \alpha_3^\prime \gcd\nolimits_{23}}.
\end{equation}
Define $\kappa = \alpha_1^\prime \alpha_2^\prime \gcd\nolimits_{12} + \alpha_1^\prime \alpha_3^\prime \gcd\nolimits_{13}
+ \alpha_2^\prime \alpha_3^\prime \gcd\nolimits_{23}$, let $p$ be a prime that divides $\kappa$, and refer first to Equation
\eqref{eq:TypeI2pseudoreflectionsR2}.  Then $p$ divides $(\alpha_3^\prime)^2\gcd\nolimits_{13}\gcd\nolimits_{23}$.
If $p$ divides $\alpha_3^\prime$, then $p$ must divide
$\alpha_1^\prime \alpha_2^\prime \gcd\nolimits_{12}$.  However, as $\alpha_1^\prime$, $\alpha_2^\prime$, and $\alpha_3^\prime$
are pairwise coprime, it must be that $p$ divides $\gcd\nolimits_{12}$.  But then $p$ divides $\alpha_1$, $\alpha_2$, and $\alpha_3$,
a contradiction, so it must be the case that $\kappa$ and $\alpha_3^\prime$ are coprime.  Therefore, $\kappa$ divides
$\gcd\nolimits_{13}\gcd\nolimits_{23}$.

Applying an identical argument with Equation \eqref{eq:TypeI2pseudoreflectionsS2} demonstrates that
$\kappa$ and $\alpha_2^\prime$ are coprime so that $\kappa$ divides $\gcd\nolimits_{12}\gcd\nolimits_{23}$.  However,
if $q$ is any prime that divides $\kappa$, then $q$ cannot divide both
$\gcd\nolimits_{12}$ and $\gcd\nolimits_{13}$, for then it would divide all three weights, so it must divide
$\gcd\nolimits_{23}$.  Therefore, $\kappa$
must in fact be coprime to both $\gcd\nolimits_{12}$ and $\gcd\nolimits_{13}$, and hence
$\kappa$ divides $\gcd\nolimits_{23}$.  However, $\kappa > \gcd\nolimits_{23}$, yielding a contradiction
and completing the proof.
\end{proof}

Similarly, we have the following.

\begin{lemma}
\label{lem:TypeI1pseudoreflection}
Let $A = (a_1, a_2, a_3)$ be a generic weight vector associated to a linear $\Sp^1$-action on $\C^3$
such that each $a_i$ is nonzero and $\gcd(a_1, a_2, a_3) = 1$.  Then the associated reduced space $M_0$
cannot be $\Z$-graded regularly symplectomorphic to $\C^2/\Gamma$ where $\Gamma$ is abelian and contains
one primitive pseudoreflection.
\end{lemma}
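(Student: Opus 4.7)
The plan is to adapt the strategy of Lemma \ref{lem:TypeI2pseudoreflections} to the one-pseudoreflection setting. The saving grace will be that, although we now lose one of the two integrality equations used there, we gain the hypothesis that exactly one of $\gcd\nolimits_{12}, \gcd\nolimits_{13}$ exceeds $1$, and this is strong enough to force a contradiction from the single remaining equation.

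Assume for contradiction that a $\Z$-graded regular symplectomorphism exists, and let $h\in\Gamma$ be the unique primitive pseudoreflection, of order $r$. Then $\C^2/\Gamma$ has exactly one complex codimension-$1$ orbit type stratum, so by Proposition \ref{prop:ReductionStep} so does $M_0$; by Corollary \ref{cor:Reduced3NotPairwiseCoprime} we may permute weights so that $\gcd\nolimits_{12}>1$ and $\gcd\nolimits_{13}=1$. As $\Gamma$ is abelian I may simultaneously diagonalize it, so that $(\C^2)^h=\C$ and $\Gamma/\langle h\rangle$ acts on it as a cyclic subgroup of $\U_1$ of order $|\Gamma|/r$. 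By \cite[Theorem 7]{FarHerSea} the closure of the corresponding stratum of $M_0$ is $\Z$-graded regularly symplectomorphic to $\C/\Z_{N_1}$ with $N_1=(\alpha_1+\alpha_2)/\gcd\nolimits_{12}$. Matching orders via $|\Gamma|=1/\gamma_0(A)$ from Equations \eqref{eq:Gamma0Reduced} and \eqref{eq:Gamma0123Finite} reproduces Equation \eqref{eq:TypeI2pseudoreflectionsR}, so
\[
    r=\gcd\nolimits_{12}+\frac{\alpha_3^2\gcd\nolimits_{12}}{\alpha_1\alpha_2+\alpha_1\alpha_3+\alpha_2\alpha_3}
\]
must be a positive integer, with the second summand strictly positive.

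The endgame is a quick divisibility argument. Since $\gcd(\alpha_1,\alpha_2,\alpha_3)=1$ and $\gcd\nolimits_{13}=1$, I write $\alpha_1=\gcd\nolimits_{12}\,a$, $\alpha_2=\gcd\nolimits_{12}\gcd\nolimits_{23}\,b$, $\alpha_3=\gcd\nolimits_{23}\,c$ with $a,b,c$ pairwise coprime, $\gcd(a,\gcd\nolimits_{23})=1$, and $\gcd(c,\gcd\nolimits_{12})=1$. Substituting, the integrality of $r$ reduces to $\kappa\mid\gcd\nolimits_{23}c^2$, where
\[
    \kappa=\gcd\nolimits_{12}\,ab+ac+\gcd\nolimits_{23}\,bc.
\]
Any prime $p\mid\kappa$ that divided $c$ would have to divide $\gcd\nolimits_{12}\,ab$, hence (by pairwise coprimality of $a,b,c$) divide $\gcd\nolimits_{12}$, forcing $p\mid\gcd(\alpha_1,\alpha_3)=1$, absurd. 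So $\gcd(\kappa,c)=1$ and therefore $\kappa\mid\gcd\nolimits_{23}$; but $\kappa\geq\gcd\nolimits_{12}+1+\gcd\nolimits_{23}>\gcd\nolimits_{23}$, a contradiction.

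I expect no serious obstacle: the only delicate point is the WLOG reduction at the start, which requires combining Proposition \ref{prop:ReductionStep} (to guarantee that codimension-$1$ orbit types match under $\chi$) with Corollary \ref{cor:Reduced3NotPairwiseCoprime} (to guarantee at least one nontrivial $\gcd\nolimits_{1j}$ on the weight side). Once the roles of the indices are fixed, the argument is actually simpler than that of Lemma \ref{lem:TypeI2pseudoreflections}, since the hypothesis $\gcd\nolimits_{13}=1$ instantly makes the single Equation \eqref{eq:TypeI2pseudoreflectionsR} sufficient.
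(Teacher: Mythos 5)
Your proposal is correct and follows essentially the same route as the paper: match the unique codimension-$1$ strata, equate $|\Gamma|/r$ with $(\alpha_1+\alpha_2)/\gcd\nolimits_{12}$ via $\gamma_0$, and derive a contradiction from the integrality of the resulting expression by showing the denominator $\kappa$ is coprime to the relevant factor yet strictly exceeds $\gcd\nolimits_{23}$. The only differences are cosmetic — you label the nontrivial pairwise gcd as $\gcd\nolimits_{12}$ where the paper uses $\gcd\nolimits_{13}$, and your normalization $\alpha_1=\gcd\nolimits_{12}a$, $\alpha_2=\gcd\nolimits_{12}\gcd\nolimits_{23}b$, $\alpha_3=\gcd\nolimits_{23}c$ is the mirror image of the paper's primed variables.
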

\begin{proof}
Assume such a $\Z$-graded regular symplectomorphism does exist for weight vector $A = (-\alpha_1, \alpha_2, \alpha_3)$
and $\Gamma < \U_2$ with one fixed primitive pseudoreflection of order $r$.  Following the proof of Lemma \ref{lem:TypeI2pseudoreflections},
we conclude that $M_0$ and $\C^2/\Gamma$ each have one complex codimension-$1$ orbit type stratum.  We may assume without
loss of generality that $\gcd\nolimits_{12} = 1$ and $\gcd\nolimits_{13} > 1$, and then the closure of the complex codimension-$1$
orbit type stratum of $M_0$ is $\Z$-graded regularly symplectomorphic to $\C/\Z_N$ with $N = (\alpha_1 + \alpha_3)/\gcd\nolimits_{13}$.
Then $|\Gamma|/r = N$, implying that
\[
    r   =   \frac{(\alpha_1 + \alpha_2)(\alpha_2 + \alpha_3)\gcd\nolimits_{13}}{\alpha_1 \alpha_2 + \alpha_1 \alpha_3 + \alpha_2 \alpha_3}
        =   \gcd\nolimits_{13} + \frac{\alpha_2^2\gcd\nolimits_{13}}
            {\alpha_1 \alpha_2 + \alpha_1 \alpha_3 + \alpha_2 \alpha_3} \in \Z.
\]
Expressing this in terms of
\[
    \alpha_1^\prime
        =  \frac{\alpha_1}{\gcd\nolimits_{13}},
        \quad
    \alpha_2^\prime
        =  \frac{\alpha_2}{\gcd\nolimits_{23}}
        \quad\mbox{and}\quad
    \alpha_3^\prime
        =  \frac{\alpha_3}{\gcd\nolimits_{13}\gcd\nolimits_{23}},
\]
we have
\[
    \frac{\alpha_2^2\gcd\nolimits_{13}}{\alpha_1\alpha_2 + \alpha_1\alpha_3 + \alpha_2\alpha_3}
    =
    \frac{(\alpha_2^\prime)^2 \gcd\nolimits_{23}}
        {\alpha_1^\prime \alpha_2^\prime  + \alpha_1^\prime \alpha_3^\prime \gcd\nolimits_{13}
        + \alpha_2^\prime \alpha_3^\prime \gcd\nolimits_{23}}.
\]
Let
$\kappa = \alpha_1^\prime \alpha_2^\prime  + \alpha_1^\prime \alpha_3^\prime \gcd\nolimits_{13} + \alpha_2^\prime \alpha_3^\prime \gcd\nolimits_{23}$,
and let $p$ be a prime that divides $\kappa$.  If $p$ divides $\alpha_2^\prime$, then it divides
$\alpha_1^\prime \alpha_3^\prime \gcd\nolimits_{13}$.  However, as $\alpha_1^\prime$, $\alpha_2^\prime$, and $\alpha_3^\prime$ are pairwise
coprime, $p$ must divide $\gcd\nolimits_{13}$.  But then $p$ divides $\alpha_1$, $\alpha_2$, and $\alpha_3$, a contradiction.
Therefore, it must be that $\kappa$ is coprime to $\alpha_2^\prime$, and hence $\kappa$ must divide
$\gcd\nolimits_{23}$.  But $\kappa > \gcd\nolimits_{23}$, a contradiction.
\end{proof}

As a consequence of Lemmas \ref{lem:TypeI2pseudoreflections} and \ref{lem:TypeI1pseudoreflection}, $\Gamma$ cannot be a Type I group.


\subsubsection*{Type II}
\label{subsubsec:TypeII}

Suppose $\Gamma$ is a Type II group, a group of the form $(\Omega_{2m}/\Omega_{2m}; \mathbb{D}_\ell/ \mathbb{D}_\ell)$.
By Lemma \ref{lem:Reduced3firstTaylor}, the degree $2$
coefficient of the Taylor expansion of $\Hilb_A^{\on}(x)$ at $x = 0$ is equal to $2$.  Then
by Lemma \ref{lem:Finite2QuadraticDihedral}, it must be that $\ell = 1$ so that
$\Gamma = (\Omega_{2m}/\Omega_{2m}; \mathbb{D}_1/ \mathbb{D}_1)$ for some $m$.

The $4m$ elements of $\Gamma$ are of the form $\omega_{2m}^j b^k$ where $b$ is as in Equation \eqref{eq:Finite2Defb};
note that $\omega_{2m}^j b^k = \omega_{2m}^{j+m} b^{k+2}$.  By Lemma \ref{lem:Finite2lambdaBEigenvals}, $\Gamma$ contains
pseudoreflections if and only if $m$ is even, in which case it contains the
two pseudoreflections $\sqrt{-1}b = -\sqrt{-1}b^3$ and $-\sqrt{-1}b = \sqrt{-1}b^3$, each of order $2$.  By Corollary \ref{cor:Gamma2Positive},
it must be that $m$ is even, and it is easy to see that $\{ \sqrt{-1}b, -\sqrt{-1}b \}$ is a set of primitive pseudoreflections.
Applying Equation \eqref{eq:Gamma0123Finite}, the first two Laurent coefficients of $\Hilb_{\R[\C^2]^\Gamma|\R}(x)$ are
\[
    \gamma_0(\Gamma)    =   \frac{1}{4m}
    \quad\mbox{and}\quad
    \gamma_2(\Gamma)    =   \frac{1}{8m}.
\]
But then $\gamma_0(\Gamma)/\gamma_2(\Gamma) = 2$, and by Lemma \ref{lem:Reduced3RatioG2G0}, $\Hilb_{\R[\C^2]^\Gamma|\R}(x)$
cannot coincide with $\Hilb_A^{\on}(x)$.  Hence $\Gamma$ cannot be of Type II.


\subsubsection*{Type III}
\label{subsubsec:TypeIII}

A Type III group is of the form $(\Omega_{4m}/\Omega_{2m}; \mathbb{D}_\ell/ \Omega_{2\ell}^{\operatorname{S}})$.
Suppose $\Gamma$ is such a group, and first assume $m > 1$ and $\ell > 1$.  Then $\Gamma$ contains the subgroups
$\Omega_{2m}$ as well as $\Omega_{2\ell}^{\operatorname{S}}$, and the common quadratic invariants of these
groups are $w_1 \cc{w_1}$ and $w_2 \cc{w_2}$ by Lemma \ref{lem:Finite2QuadraticCyclics}.  However, $\Gamma$
also contains the element $\omega_{4m}b$, which acts on $\C^2 \times \cc{\C^2}$ in coordinates
$(w_1, w_2, \cc{w_1}, \cc{w_2})$ as $\diag(\omega_{4m} b, \omega_{4m}^{-1} b)$ (in $2\times 2$ blocks).
A simple computation demonstrates that the action of this element permutes $w_1 \cc{w_1}$ and $w_2 \cc{w_2}$
so that the quadratic invariants of $\Gamma$ are of dimension at most $1$.  By Lemma \ref{lem:Reduced3firstTaylor},
$\Hilb_{\R[\C^2]^\Gamma|\R}(x) \neq \Hilb_A^{\on}(x)$.

Now suppose $\ell = 1$ so that $\Gamma = (\Omega_{4m}/\Omega_{2m}; \mathbb{D}_1/ \Omega_2^{\operatorname{S}})$
consists of the $4m$ elements $\omega_{2m}^j$ for $0 \leq j \leq 2m - 1$ and $\omega_{4m}^k b$
for $k$ odd, $1 \leq k \leq 4m - 1$.  The elements $\omega_{2m}^j$ are clearly not pseudoreflections, and
$\omega_{4m}^k b$ is a pseudoreflection if and only if $\omega_{4m}^k = \pm\sqrt{-1}$ by
Lemma \ref{lem:Finite2lambdaBEigenvals}.  If $m$ is even, then as $k$ must be odd, $\Gamma$ contains no
pseudoreflections and can be ruled out by Corollary \ref{cor:Gamma2Positive}.  If $m$ is odd, then $\Gamma$ contains two pseudoreflections
of order $2$ which form a set of primitive pseudoreflections.  Again by Equation \eqref{eq:Gamma0123Finite},
\[
    \gamma_0(\Gamma)    =   \frac{1}{4m}
    \quad\mbox{and}\quad
    \gamma_2(\Gamma)    =   \frac{1}{8m}
\]
so that $\gamma_0(\Gamma)/\gamma_2(\Gamma) = 2$.  By Lemma \ref{lem:Reduced3RatioG2G0}, $\Hilb_{\R[\C^2]^\Gamma|\R}(x)$
cannot coincide with $\Hilb_A^{\on}(x)$.

Finally, suppose $\ell > 1$ and $m = 1$.  Then
$\Gamma = (\Omega_4/\Omega_2; \mathbb{D}_\ell/ \Omega_{2\ell}^{\operatorname{S}})$ contains the $4\ell$ elements
$\diag(\omega_{2\ell},\omega_{2\ell}^{-1})^j$ and $\sqrt{-1}b\diag(\omega_{2\ell},\omega_{2\ell}^{-1})^j$
for $0 \leq j \leq 2\ell - 1$.  A simple computation using this description of the elements of $\Gamma$ demonstrates
that the three polynomials $w_1 w_2$, $\cc{w_1} \cc{w_2}$,
and $w_1 \cc{w_1} + w_2 \cc{w_2}$ are always $\Gamma$-invariant, implying that the $x^2$-coefficient in
the Taylor expansion of $\Hilb_{\R[\C^2]^\Gamma|\R}(x)$ at $x = 0$ is at least $3$.
As the $x^2$-coefficient of the Taylor expansion of $\Hilb_A^{\on}(x)$ is equal to $2$ by Lemma \ref{lem:Reduced3firstTaylor},
$\Gamma$ cannot be a Type III group.

Though we will not require the following, we note that the above coefficient is in fact equal to $3$,
which can be seen by computing the Hilbert series explicitly using Molien's formula and \cite[Theorem 4.2]{Gessel} to yield
\[
    \Hilb_{\R[\C^2]^\Gamma|\R}(x)
    =
    \frac{1 + (2\ell - 1) x^{2\ell} - (2\ell - 1) x^{2\ell+2} - x^{4\ell+2}}
        {(1 - x^2)^3(1 - x^{2\ell})^2}.
\]
Then a simple computation demonstrates that as $\ell > 1$, the $x^2$-coefficient of the Taylor series of $\Hilb_{\R[\C^2]^\Gamma|\R}(x)$
is equal to $3$.


\subsubsection*{Type III$^\prime$}
\label{subsubsec:TypeIIIprime}

Suppose $\Gamma$ is a Type III$^\prime$ group of the form
$(\Omega_{4m}/\Omega_m; \mathbb{D}_\ell/ \Omega_\ell^{\operatorname{S}})$ for $m$ and $l$ odd.
If $m > 1$ and $\ell > 1$, then just as in the case of a Type III group above,
$\Gamma$ contains the subgroups $\Omega_m$ and $\Omega_\ell^{\operatorname{S}}$
as well as the element $\omega_{4m}b$.  Therefore, the quadratic invariants of $\Gamma$
are of dimension $0$ or $1$, and $\Hilb_{\R[\C^2]^\Gamma|\R}(x) \neq \Hilb_A^{\on}(x)$.

Assume $\ell = 1$, and then the $2m$ elements of $\Gamma = (\Omega_{4m}/\Omega_m; \mathbb{D}_1/1)$
are $\omega_{4m}^{4j}$ and $\omega_{4m}^{4j+1}b$ for $0 \leq j \leq m - 1$.
The scalars $\omega_{4m}^{4j}$ are never pseudoreflections, while $\omega_{4m}^{4j+1}b$
is a pseudoreflection if and only if $\omega_{4m}^{4j+1} = \pm \sqrt{-1}$ by Lemma \ref{lem:Finite2lambdaBEigenvals}.
Recalling that $m$ is odd, this occurs exactly once; if $m \equiv 1\mod 4$, then it occurs when $4j+1 = m$, and if
$m \equiv 3\mod 4$, it occurs when $4j+1 = 3m$.  In either case, $\Gamma$ contains exactly one pseudoreflection of order $2$.
By Equation \eqref{eq:Gamma0123Finite},
\[
    \gamma_0(\Gamma)    =   \frac{1}{2m}
    \quad\mbox{and}\quad
    \gamma_2(\Gamma)    =   \frac{1}{8m}.
\]
Then $\gamma_0(\Gamma)/\gamma_2(\Gamma) = 4$ so that $\Hilb_{\R[\C^2]^\Gamma|\R}(x) \neq \Hilb_A^{\on}(x)$
by Lemma \ref{lem:Reduced3RatioG2G0}.

Finally, assume $m  = 1$ and $\ell > 1$ so that $\Gamma = (\Omega_4/1; \mathbb{D}_\ell/ \Omega_\ell^{\operatorname{S}})$
consists of the $2\ell$ elements $\diag(\omega_\ell,\omega_\ell^{-1})^j$ and $\sqrt{-1}b\diag(\omega_\ell,\omega_\ell^{-1})^j$
for $0 \leq j \leq \ell - 1$.  Just as in the case of Type III groups (with $\ell > 1$ and $m = 1$),
one checks that $w_1 w_2$, $\cc{w_1} \cc{w_2}$, and $w_1 \cc{w_1} + w_2 \cc{w_2}$ are always $\Gamma$-invariant
so that the $x^2$-coefficient in the Taylor series of $\Hilb_{\R[\C^2]^\Gamma|\R}(x)$ is at least $3$.
By Lemma \ref{lem:Reduced3firstTaylor}, $\Gamma$ cannot be a Type III$^\prime$ group.

Still using the same methods as in Type III, $\Hilb_{\R[\C^2]^\Gamma|\R}(x)$ can be computed explicitly to be
\[
    \Hilb_{\R[\C^2]^\Gamma|\R}(x)
    =
    \frac{1 + (\ell - 1) x^{\ell} - (\ell - 1) x^{2\ell+2} - x^{2\ell+2}}
        {(1 - x^2)^3(1 - x^{\ell})^2},
\]
and then it is easy to verify that the $x^2$-coefficient of the Taylor series is equal to $3$ (as $\ell$ must be odd and so
$\ell \geq 3$).


\subsubsection*{Type IV}
\label{subsubsec:TypeIV}

Suppose $\Gamma$ is a Type IV group of the form $(\Omega_{4m}/\Omega_{2m}; \mathbb{D}_{2\ell}/ \mathbb{D}_\ell)$.
By Lemmas \ref{lem:Reduced3firstTaylor} and \ref{lem:Finite2QuadraticDihedral}, if the $x^2$-coefficients
in the Taylor expansions of $\Hilb_{\R[\C^2]^\Gamma|\R}(x)$ and $\Hilb_A^{\on}(x)$ coincide, it must be
that $\ell = 1$ and then $\Gamma = (\Omega_{4m}/\Omega_{2m}; \mathbb{D}_2/ \mathbb{D}_1)$.  Note that $\Gamma$
contains the Type II group $(\Omega_{2m}/\Omega_{2m}; \mathbb{D}_1/ \mathbb{D}_1)$ as an index $2$ subgroup,
and the nontrivial element of the quotient $\Gamma/(\Omega_{2m}/\Omega_{2m}; \mathbb{D}_1/ \mathbb{D}_1)$ is
the coset of $\omega_{4m}\diag(\sqrt{-1},-\sqrt{-1})$.  We consider two cases.

If $m$ is odd, then $(\Omega_{2m}/\Omega_{2m}; \mathbb{D}_1/ \mathbb{D}_1)$ contains no pseudoreflections (see Type II above).
Hence any pseudoreflection in $\Gamma$ must be of the form
\begin{equation}
\label{eq:TypeIV.1}
    \omega_{4m}\diag(\sqrt{-1},-\sqrt{-1})\omega_{2m}^j
    =
    \omega_{4m}^{2j+1}\diag(\sqrt{-1},-\sqrt{-1})
\end{equation}
with eigenvalues $\pm \sqrt{-1}\omega_{4m}^{2j+1}$ or
\begin{equation}
\label{eq:TypeIV.2}
    \omega_{4m}\diag(\sqrt{-1},-\sqrt{-1})\omega_{2m}^j b
    =
    \omega_{4m}^{2j+1}
    \begin{bmatrix} 0   &   i   \\
                    i   &   0
    \end{bmatrix},
\end{equation}
also with eigenvalues $\pm \sqrt{-1}\omega_{4m}^{2j+1}$.  Then it is easy to see that $\Gamma$ contains
exactly four pseudoreflections,
\begin{align*}
    \omega_{4m}^m \diag(\sqrt{-1},-\sqrt{-1})
    &=
    \diag(-1,1),
    \quad
    &\omega_{4m}^m
    \begin{bmatrix} 0   &   i   \\
                    i   &   0
    \end{bmatrix}
    &=
    \begin{bmatrix} 0   &  -1   \\
                    -1  &   0
    \end{bmatrix},
    \\
    \omega_{4m}^{3m} \diag(\sqrt{-1},-\sqrt{-1})
    &=
    \diag(1,-1),
    \quad\mbox{and}
    &\omega_{4m}^{3m}
    \begin{bmatrix} 0   &   i   \\
                    i   &   0
    \end{bmatrix}
    &=
    \begin{bmatrix} 0   &   1   \\
                    1   &   0
    \end{bmatrix},
\end{align*}
each of order $2$.  By Equation \eqref{eq:Gamma0123Finite},
\[
    \gamma_0(\Gamma)
    =
    \gamma_2(\Gamma)
    =
    \frac{1}{8m}
\]
so that $\gamma_0(\Gamma)/\gamma_2(\Gamma) = 1$, and $\Hilb_{\R[\C^2]^\Gamma|\R}(x) \neq \Hilb_A^{\on}(x)$
by Lemma \ref{lem:Reduced3RatioG2G0}.

Now suppose $m$ is even, and then $(\Omega_{2m}/\Omega_{2m}; \mathbb{D}_1/ \mathbb{D}_1)$ contains the two
pseudoreflections $\pm\sqrt{-1}b$ of order $2$ (see Type II).  As in the case of $m$ odd, the other elements
of $\Gamma$ are of the forms described in Equations \eqref{eq:TypeIV.1} and \eqref{eq:TypeIV.2} with eigenvalues
$\pm \sqrt{-1}\omega_{4m}^{2j+1}$.  As $m$ is even, $\pm \sqrt{-1}\omega_{4m}^{2j+1} \neq 1$ for each $j$ so that there
are no additional pseudoreflections.  Equation \eqref{eq:Gamma0123Finite} yields
\[
    \gamma_0(\Gamma)    =   \frac{1}{8m}
    \quad\mbox{and}\quad
    \gamma_2(\Gamma)    =   \frac{1}{16m}
\]
so that $\gamma_0(\Gamma)/\gamma_2(\Gamma) = 2$, and $\Hilb_{\R[\C^2]^\Gamma|\R}(x) \neq \Hilb_A^{\on}(x)$
by Lemma \ref{lem:Reduced3RatioG2G0}.  Therefore, $\Gamma$ cannot be a Type IV group.


\subsubsection*{Types V through IX}
\label{subsubsec:TypeVtoIX}

The remaining types of finite subgroups of $\U_2$ are Type V of the form
$(\Omega_{2m}/\Omega_{2m}; \mathbb{T}_{24}/\mathbb{T}_{24})$,
Type VI of the form
$(\Omega_{6m}/\Omega_{2m}; \mathbb{T}_{24}/\mathbb{D}_2)$,
Type VII of the form
$(\Omega_{2m}/\Omega_{2m}; \mathbb{O}_{48}/\mathbb{O}_{48})$,
Type VIII of the form
$(\Omega_{4m}/\Omega_{2m}; \mathbb{O}_{48}/\mathbb{T}_{24})$, and
Type IX of the form
$(\Omega_{2m}/\Omega_{2m}; \mathbb{I}_{120}/\mathbb{I}_{120})$.
Because $\mathbb{D}_2$ is a subgroup of both $\mathbb{T}_{24}$ and $\mathbb{O}_{48}$
while $\mathbb{I}_{120}$ contains a subgroup conjugate in $\SU_2$ to $\mathbb{D}_3$,
if $\Gamma$ is a group of any of the above types, then $\Gamma$ contains a subgroup conjugate to
$\mathbb{D}_r$ for some $r \geq 2$.  By Lemma \ref{lem:Finite2QuadraticDihedral}, the Taylor expansion of
$\Hilb_{\R[\C^2]^\Gamma|\R}(x)$ has coefficient $0$ or $1$ in degree $2$, so that by Lemma \ref{lem:Reduced3firstTaylor},
$\Hilb_{\R[\C^2]^\Gamma|\R}(x) \neq \Hilb_A^{\on}(x)$ for any generic, effective weight vector with nonzero weights.

It follows that $\Gamma$ cannot be any of the finite subgroups of $\U_2$, which completes the proof of Theorem \ref{mainthm}.

\bibliographystyle{amsplain}
\bibliography{HSImpossibility}

\end{document}